\definecolor{Red}{rgb}{1.00, 0.00, 0.00}
\definecolor{Blue}{rgb}{0.00, 0.00, 1.00}
\definecolor{Green}{rgb}{0.2,0.5,0.2}
\newtheorem{theorem}{Theorem}
\newtheorem{corollary}[theorem]{Corollary}
\newtheorem{lemma}[theorem]{Lemma}
\newtheorem{assumption}[theorem]{Assumption}
\newtheorem{definition}[theorem]{Definition}
\newtheorem{remark}[theorem]{Remark}
\numberwithin{theorem}{section}
\numberwithin{equation}{section}
\let\Section=\section
\def\section{\setcounter{equation}{0}\Section}\sf
\def\Swiech{{\accent"13S}wie{\hbox{\kern -0.21em\lower 0.79ex\hbox{$\textfont1=\scriptfont1\lhook$}}}ch}
\def\SWIECH{{\accent"13S}WIE{\hbox{\kern -0.26em\lower 0.77ex\hbox{$\textfont1=\scriptfont1\lhook$}}}CH}
\title{\vspace{-3ex}{\bf Stochastic Representations for Solutions to Parabolic Dirichlet Problems for  Nonlocal Bellman Equations}}
\author{
    \textsc{Ruoting Gong}\\
    \textit{Department of Applied Mathematics, Illinois Institute of Technology}\\
    \textit{Chicago, IL 60616, U.S.A.}\\
    \textit{E-mail: rgong2@iit.edu} \\
\vspace{0.1cm}\\
    \textsc{Chenchen Mou}\\
    \textit{Department of Mathematics, UCLA}\\
    \textit{Los Angeles, CA 90095, U.S.A.}\\
    \textit{E-mail: muchenchen@math.ucla.edu}
\vspace{0.1cm}\\
    {\small AND}
\vspace{0.1cm} \\
    \textsc{Andrzej \Swiech}\\
    \textit{School of Mathematics, Georgia Institute of Technology}\\
    \textit{Atlanta, GA 30332, U.S.A.}\\
    \textit{E-mail: swiech@math.gatech.edu}}
\date{}
\begin{document}

\maketitle

\begin{abstract}

We prove a stochastic representation formula for the viscosity solution of Dirichlet terminal-boundary value problem for a degenerate
Hamilton-Jacobi-Bellman integro-partial differential equation in a bounded domain. We show that the unique viscosity solution is the value function of the associated stochastic optimal control problem. We also obtain the dynamic programming principle for the associated stochastic optimal control problem in a bounded domain.

\end{abstract}

\vspace{0.2cm}

\noindent{\bf Keywords:} viscosity solution, integro-PDE, Hamilton-Jacobi-Bellman equation, dynamic programming principle, stochastic representation formula, value function, L\'{e}vy process.

\vspace{0.2cm}

\noindent{\bf 2010 Mathematics Subject Classification}: 35R09, 35K61, 35K65, 49L20, 49L25, 60H10, 60H30, 93E20.

\section{Introduction}

Stochastic representation formulas establish natural connections between the study of stochastic processes, and partial differential equations (PDEs) or integro-partial differential equations (integro-PDEs). First formulas of this type appeared in the works of Feynman~\cite{Feynman:1948} and Kac~\cite{Kac:1949}. Since then, the so-called Feynman-Kac formula has been extended and generalized in different directions. Most notably, the dynamic programming principle and the theory of regular and viscosity solutions established rigorous connection between stochastic optimal control problems and fully nonlinear Hamilton-Jacobi-Bellman (HJB) equations, thus providing representations for solutions to HJB equations as value functions of the associated optimal control problems. Such techniques found applications in many areas, such as finance, economics, physics, biology, and engineering. Various results on stochastic representation formulas for regular and viscosity solutions to HJB and Isaacs PDEs in bounded and unbounded domains and their connections to stochastic optimal control problems can be found, for instance, in~\cite{BuckdahnLi:2008,FlemingSoner:2006,FlemingSouganidis:1989,Katsoulakis:1995,Kovats:2009,Krylov:1980,Lions:1983(1),Lions:1983(2),MaZhang:2002,Nisio:2015,
PardouxPeng:1992,Swiech:1996,Touzi:2002,Touzi:2013,YongZhou:1999,Zhang:2005} and the references therein. The literature on the subject is huge.

There are also many existing results for integro-PDEs. Viscosity solutions to HJB integro-PDEs and their stochastic representation formulas as value functions of the associated stochastic optimal control problems were originally investigated in~\cite{Soner:1986,Soner:1988}. Stochastic optimal control of jump-diffusion processes and various results about the associated HJB equations are discussed in~\cite{OksendalSulem:2007}. The presentation in~\cite{OksendalSulem:2007} focuses more on applications, and is not always completely rigorous. A HJB obstacle problem on the whole space $\mathbb{R}^{n}$ associated to the optimal stopping of a controlled jump-diffusion process was studied in~\cite{Pham:1998}, where the value function is proved to be the unique viscosity solution of the obstacle problem. However, the proof of the dynamic programming principle is only sketched there and some other arguments are left without full details. Stochastic optimal control problem in the whole space, which also included control of jump diffusions, was considered in~\cite{ElKarouiNguyeneanblanc:1987} and the dynamic
programming principle was proved there. A special two-dimensional HJB integro-PDE associated to an optimal control problem with jump processes originating from mathematical finance was studied in~\cite{Ishikawa:2004}. The unique viscosity solution was identified as the value function, and the full proof of the dynamic programming principle was presented. The Dirichlet problem for stable-like operators and corresponding stochastic representations were 
studied in~\cite{ArapostathisBiswasCaffarelli:2016}. Stochastic representation formulas based on backward stochastic differential equations for different types of integro-PDEs were obtained in~\cite{BarlesBuckdahnPardoux:1997,BuckdahnHuLi:2011,KharroubiPham:2015,NualartSchoutens:2001}. Value functions of stochastic differential games for jump diffusions, the dynamic programming principle, and the associated Isaacs equations were studied in~\cite{Biswas:2012,BiswasJakobsenKarlsen:2010,BuckdahnHuLi:2011,KoikeSwiech:2013}. There are many other related results. For instance, results for obstacle problems for integro-PDEs related to optimal stopping and impulse control problems with jump-diffusions can be found in~\cite{GarroniMenaldi:2002,Seydel:2009}. A time-inhomogeneous L\'{e}vy model with discontinuous killing rates was investigated in~\cite{Glau:2016}, and stochastic representation formulas for the associated integro-PDE was derived. Feynman-Kac formulas for regime-switching jump-diffusion processes driven by L\'{e}vy processes were also obtained in~\cite{ZhuYinBaran:2015}. A proof of the dynamic programming principle and the representation formula for a viscosity solution to a HJB integro-PDE in an infinite dimensional Hilbert space is contained in~\cite{SwiechZabczyk:2016}.

In this article, we consider a stochastic optimal control problem for a general class of time- and state-dependent controlled stochastic differential equations (SDEs) driven by a general L\'{e}vy process. Our main focus is a stochastic representation formula for the unique viscosity solution to the Dirichlet terminal-boundary value problem for the associated degenerate HJB integro-PDE \eqref{eq:HJB}$-$\eqref{eq:TermBoundCondHJB} in a bounded domain. This is a classical problem which is very technical and whose full details are often omitted or overlooked, especially for problems in bounded domains. We identify the unique viscosity solution to \eqref{eq:HJB}$-$\eqref{eq:TermBoundCondHJB} as the value function of the associated stochastic optimal control problem. We make mild assumptions on the regularity of the domain and the non-degeneracy of the controlled diffusions along the boundary to guarantee the existence of a continuous viscosity solution and thus the continuity of the value function. These assumptions are needed to apply the integro-PDE results of
\cite{Mou1:2016}. Instead of proving directly the continuity of the value function and the dynamic programming principle, we start with the viscosity solution of the HJB integro-PDE, which can be obtained by Perron's method, and show that it must be the value function. Our method is similar to that of~\cite{FlemingSoner:2006}. Similar methods have been also used for Isaacs equations and differential game problems in~\cite{KoikeSwiech:2013,Swiech:1996}. We approximate our HJB integro-PDE by equations which are non-degenerate, have finite control sets, more regular coefficients, and smooth terminal-boundary values, and are considered on slightly enlarged domains. Such equations have classical solutions for which the representation formulas can be obtained. We then pass to the limits with various approximations. The main difficulties come from the fact that we are dealing with a bounded region and hence we need a lot of technical estimates involving the analysis of the behavior of stochastic processes and their exit times. We also need precise knowledge about the behavior of the viscosity solutions of the perturbed equations along the boundaries of their domains, which are obtained by comparison theorems and the constructions of appropriate viscosity subsolutions and supersolutions. We use regularity and existence results from~\cite{Mou:2017,Mou1:2016}.
The approximations using finite control sets, more regular coefficients, and smooth terminal-boundary values are needed to employ a regularity
theorem from~\cite{Mou1:2016}. Enlarged domains are used to handle exit time estimates. We remark that making the HJB integro-PDE non-degenerate by adding a small Laplacian term to the equation corresponds to the introduction of another independent Wiener process on the level of the stochastic control problem, and hence to possible enlargement of the reference probability space. As a byproduct of our method, we obtain the dynamic programming principle for the associated stochastic optimal control problem. Moreover our method provides a fairly explicit way to construct $\varepsilon$-optimal controls using approximating HJB equations.

The paper is organized as follows. Section \ref{sec:preliminary} contains the setup of the problem and some preliminary estimates, which will be needed throughout the paper. Section \ref{sec:DP} establishes the stochastic representation formula and the dynamic programming principle for the solution to the HJB integro-PDE terminal boundary value problem \eqref{eq:HJB}$-$\eqref{eq:TermBoundCondHJB}. The results are first obtained in Subsection \ref{Subsec:DPSmoothSols} for the classical solution to \eqref{eq:HJB}$-$\eqref{eq:TermBoundCondHJB}, and are then extended, in Subsection \ref{Subsec:DPVisSolFinCtrl}, to the viscosity solution to \eqref{eq:HJB}$-$\eqref{eq:TermBoundCondHJB} with a finite control set. Using various approximation arguments, in Subsection \ref{Subsec:DPVisSolGenCtrl}, the representation formula and the dynamic programming principle is finally proved for the viscosity solution to \eqref{eq:HJB}$-$\eqref{eq:TermBoundCondHJB} when the control set is a general Polish space. Section \ref{sec:VisConstrn} provides a construction of viscosity sub/supersolutions to \eqref{eq:HJB}$-$\eqref{eq:TermBoundCondHJB}.

\section{Preliminaries}\label{sec:preliminary}

\subsection{Setup and Assumptions}\label{subsec:Model}

Throughout this article, let $T>0$ be a fixed terminal time, let $t\in[0,T)$ be an arbitrary fixed time, and let $d,m_{1},m_{2}\in\mathbb{N}$ be fixed positive integers. Let $O\subset\mathbb{R}^{d}$ be a bounded domain. Let $\mathcal{U}$ be a Polish space equipped with its metric $d_{\mathcal{U}}$. Let $Q:=[0,T)\times O$, and let $Q^{0}:=[0,T)\times\mathbb{R}^{d}$. Let $\nu$ be a L\'{e}vy measure, i.e., a measure on $\mathcal{B}(\mathbb{R}^{m_{2}}_{0})$, where $\mathbb{R}^{m_{2}}_{0}:=\mathbb{R}^{m_{2}}\setminus\{0\}$, such that
\begin{align*}
\int_{\mathbb{R}^{m_{2}}_{0}}\left(|z|^{2}\wedge 1\right)\nu(dz)<+\infty.
\end{align*}

We say that $\mu:=(\Omega,\mathscr{F},\mathscr{F}_{s}^{t},\mathbb{P},\mathcal{W},\mathcal{L})$ is a \emph{generalized reference probability space} if it satisfies the following conditions:
\begin{itemize}
\item $(\Omega,\mathscr{F},\mathbb{P})$ is a complete probability space, and $\{\mathscr{F}_{s}^t\}_{s\in [t,T]}$ is a filtration of sub-$\sigma$-fields of $\mathscr{F}$ satisfying the usual conditions;
\item $\mathcal{W}$ is an $m_{1}$-dimensional standard $\mathscr{F}_{s}^{t}$-Brownian motion on $(\Omega,\mathscr{F},\mathbb{P})$;
\item $\mathcal{L}$ is an $m_{2}$-dimensional $\nu$ $\mathscr{F}_{s}^{t}$-L\'{e}vy process on $(\Omega,\mathscr{F},\mathbb{P})$. That is, $\mathcal{L}$ is an $\mathcal{F}^{t}_{s}$-adapted stochastic process with $\mathbb{P}$-a.$\,$s. c\'{a}dl\'{a}g trajectories, such that for all $t\leq t_1\leq t_2\leq T$, the random variable $\mathcal{L}(t_{2})-\mathcal{L}(t_{1})$ is independent of $\mathscr{F}^{t}_{t_{1}}$, and
    \begin{align*}
    \mathbb{E}\left(e^{i\left(\mathcal{L}(t_{2})-\mathcal{L}(t_{1})\right)\cdot z}\right)=e^{-(t_{2}-t_{1})\psi(z)},\quad z\in\mathbb{R},
    \end{align*}
    where
    \begin{align*}
    \psi(z)=\int_{\mathbb{R}^{m_{2}}_{0}}\left(1-e^{iz\cdot y}+\mathbbm{1}_{\{|y|<1\}}iz\cdot y\right)\nu(dy).
    \end{align*}
\end{itemize}
Note that we do not assume here that $\mathcal{W}(t)=0$ or $\mathcal{L}(t)=0$. The jump measure of $\mathcal{L}$ (defined on $\mathcal{B}([t,T])\otimes\mathcal{B}(\mathbb{R}^{m_{2}}_{0})$) is denoted by $N(ds,dz)$, with its compensated measure $\widetilde{N}(ds,dz):=N(ds,dz)-ds\,\nu(dz)$. For more details on L\'{e}vy processes, we refer the reader to~\cite{Appelbaum:2009,Bertoin:1996,PeszatZabczyk:2007}. Finally, we denote by $\mathcal{A}_{\mu}$ the set of all $\mathscr{F}_s^t$-predictable $\mathcal{U}$-valued processes on $[t,T]$, and let $\mathcal{A}_{t}:=\cup_{\mu}\mathcal{A}_{\mu}$, where the union is taken over all generalized reference probability spaces $\mu$ on $[t,T]$.

For any generalized reference probability space $\mu=(\Omega,\mathscr{F},\mathscr{F}_{s}^{t},\mathbb{P},\mathcal{W},\mathcal{L})$, any control $U\in\mathcal{A}_{\mu}$, and any $x\in\mathbb{R}^{d}$, consider an $\mathbb{R}^{d}$-valued stochastic process $X(s;t,x)$, governed by the following controlled SDE:
\begin{align}\label{eq:MainSDEs}
X(s;t,x)&=x+\int_{t}^{s}b\left(r,X(r;t,x),U(r)\right)dr+\int_{t}^{s}\sigma\!\left(r,X(r;t,x),U(r)\right)d\mathcal{W}(r)\nonumber\\
&\quad +\int_{t}^{s}\int_{\mathbb{R}^{m_{2}}_{0}}\gamma\left(r,X(r-;t,x),U(r),z\right)\widetilde{N}(dr,dz),\quad s\in[t,T].
\end{align}
\begin{assumption}\label{assump:SDECoefs}
Throughout this article, we make the following assumptions on the coefficients in the SDE system \eqref{eq:MainSDEs}.
\begin{itemize}
\item [(i)] $\gamma:\overline{Q^{0}}\times\mathcal{U}\times\mathbb{R}^{m_{2}}\rightarrow\mathbb{R}^{d}$ is a measurable function.
\item [(ii)] $b:\overline{Q^{0}}\times\mathcal{U}\rightarrow\mathbb{R}^{d}$ and $\sigma:\overline{Q^{0}}\times\mathcal{U}\rightarrow\mathbb{R}^{d\times m_{1}}$ are uniformly continuous functions.
\item [(iii)] There exist a universal constant $C>0$, a modulus of continuity $\varpi:\mathbb{R}^{+}\rightarrow\mathbb{R}^{+}$ with $\lim_{r\rightarrow 0}\varpi(r)=\varpi(0)=0$, and a Borel measurable function $\rho:\mathbb{R}^{m_{2}}_{0}\rightarrow\mathbb{R}$ which is bounded on any bounded subset of $\mathbb{R}^{m_{2}}_{0}$, and which satisfies
    \begin{align*}
    \text{(a)}\quad\inf_{|z|>r}\rho(z)>0,\quad\forall\,\,r>0;\qquad\text{(b)}\quad M:=\int_{\mathbb{R}^{m_{2}}_{0}}\rho^{2}(z)\,\nu(dz)<\infty,
    \end{align*}
    such that for any $u,u_{1},u_{2}\in\mathcal{U}$, $s\in[0,T)$, $(s_{1},y_{1}),(s_{2},y_{2})\in \overline{Q^{0}}$, and any $z\in\mathbb{R}^{m_{2}}_{0}$,
    \begin{align}
    \left|b(s,y_{1},u)-b(s,y_{2},u)\right|+\left\|\sigma(s,y_{1},u)-\sigma(s,y_{2},u)\right\|\leq C\left|y_{1}-y_{2}\right|,\qquad\quad\,\,\nonumber\\
    \left|\gamma(s_{1},y_{1},u_{1},z)-\gamma(s_{2},y_{2},u_{2},z)\right |\leq C\rho(z)\left(\varpi\!\left(d_{\mathcal{U}}(u_{1},u_{2})+\left|s_{1}-s_{2}\right|\right)+\left|y_{1}-y_{2}\right|\right),\nonumber\\
    \label{eq:inftybounds} \left\||b|\right\|_{L^{\infty}(\overline{Q^{0}}\times\mathcal{U})}+\left\|\|\sigma\|\right\|_{L^{\infty}(\overline{Q^{0}}\times\mathcal{U})}\leq C,\quad\left\||\gamma(\cdot,\cdot,\cdot,z)|\right\|_{L^{\infty}(\overline{Q^{0}}\times\mathcal{U})}\leq C\rho(z).\quad\,\,\,\,
    \end{align}
\end{itemize}
\end{assumption}

To avoid cumbersome notation, from now on, we will be writing $\|b\|_{L^{\infty}(\overline{Q^{0}}\times\mathcal{U})}$, $\|\sigma\|_{L^{\infty}(\overline{Q^{0}}\times\mathcal{U})}$, etc., for $\||b|\|_{L^{\infty}(\overline{Q^{0}}\times\mathcal{U})}$, $\|\|\sigma\|\|_{L^{\infty}(\overline{Q^{0}}\times\mathcal{U})}$, etc., i.e., we will be omitting the inside norms in the notation for the supremum norms of vector and matrix valued functions.

For any fixed $t\in[0,T)$ and any generalized reference probability space $\mu$, let $\mathcal{H}$ be the space of all $\mathscr{F}_{s}^{t}$-adapted c\`{a}dl\`{a}g processes $Y$ such that
\begin{align*}
\|Y\|_{\mathcal{H}}:=\left(\mathbb{E}\left(\sup_{s\in[t,T]}|Y(s)|^{2}\right)\right)^{1/2}<\infty.
\end{align*}
The next result provides the existence of a unique strong c\`{a}dl\`{a}g solution to \eqref{eq:MainSDEs}. The proof is quite standard and is thus omitted here. The reader is referred to, e.g., \cite[Section 6.2]{Appelbaum:2009},~\cite[Section 1.3]{OksendalSulem:2007} or \cite{SwiechZabczyk:2016}, where the proof of existence in the case of a similar controlled SDE in a Hilbert space is provided.
\begin{theorem}\label{thm:StrSolsSDEs}
Let Assumption \ref{assump:SDECoefs} be satisfied and let $U\in\mathcal{A}_{\mu}$. Then, for every $x\in\mathbb{R}^{d}$, the SDE \eqref{eq:MainSDEs} admits a unique strong solution $X(s;t,x)$ in the space $\mathcal{H}$. Moreover, there exists a constant $K_{1}=K_{1}(C,T,M)>0$, depending on $C$, $T$, and $M$, such that
\begin{align}\label{eq:Est2ndMoment}
\mathbb{E}\left(\sup_{s\in[t,T]}\left|X(s;t,x)\right|^{2}\right)\leq K_{1}\left(1+|x|^{2}\right).
\end{align}
\end{theorem}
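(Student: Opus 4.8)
The plan is to prove existence, uniqueness, and the second-moment bound by a standard Picard iteration in the Banach space $\mathcal{H}$, using the global Lipschitz and linear-growth structure of the coefficients (Assumption \ref{assump:SDECoefs}(iii)) together with the It\^o isometries for the Brownian and compensated-Poisson integrals. First I would fix $\mu$, $U\in\mathcal{A}_\mu$, and $x\in\mathbb{R}^d$, and define the map $\Phi:\mathcal{H}\to\mathcal{H}$ by letting $\Phi(Y)(s)$ be the right-hand side of \eqref{eq:MainSDEs} with $X(\cdot;t,x)$ replaced by $Y$. The first task is to check that $\Phi$ maps $\mathcal{H}$ into $\mathcal{H}$: the drift term is controlled pathwise by $\|b\|_{L^\infty}(s-t)\le CT$; for the Brownian term one applies the Burkholder--Davis--Gundy (BDG) inequality and then the bound $\|\sigma\|_{L^\infty}\le C$; for the jump term one applies the BDG inequality for the compensated Poisson integral, which yields $\mathbb{E}\sup_{s}|\int_t^s\!\int \gamma\,\widetilde N|^2 \le c\,\mathbb{E}\int_t^T\!\int |\gamma(r,Y(r-),U(r),z)|^2\nu(dz)\,dr \le c\,C^2 T \int \rho^2(z)\,\nu(dz) = cC^2TM < \infty$, using condition (b). Combining these three estimates with $2$-norm triangle inequalities gives $\|\Phi(Y)\|_{\mathcal{H}}^2 \le K(C,T,M)(1+|x|^2 + \|Y\|_{\mathcal{H}}^2) < \infty$.

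Next I would establish the contraction-type estimate. For $Y_1,Y_2\in\mathcal{H}$, subtract and estimate each of the three pieces on $[t,s]$ for $s\le \tau\le T$; the Lipschitz bounds in \eqref{eq:inftybounds}-preamble give, after BDG for the two stochastic integrals and Cauchy--Schwarz for the drift,
\begin{align*}
\mathbb{E}\Big(\sup_{t\le r\le s}|\Phi(Y_1)(r)-\Phi(Y_2)(r)|^2\Big) \le C'\!\int_t^s \mathbb{E}\Big(\sup_{t\le r'\le r}|Y_1(r')-Y_2(r')|^2\Big)dr,
\end{align*}
where $C'=C'(C,T,M)$ absorbs the constant $M$ coming from the jump term via $\int\rho^2(z)\,\nu(dz)$. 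Iterating this inequality $n$ times yields $\|\Phi^n(Y_1)-\Phi^n(Y_2)\|_{\mathcal{H}}^2 \le \frac{(C'T)^n}{n!}\|Y_1-Y_2\|_{\mathcal{H}}^2$, so some power $\Phi^n$ is a strict contraction on $\mathcal{H}$; the Banach fixed-point theorem then gives a unique fixed point $X(\cdot;t,x)\in\mathcal{H}$, which is the unique strong c\`adl\`ag solution. (That the Picard iterates, hence the fixed point, are $\mathscr{F}^t_s$-adapted and c\`adl\`ag follows since each integral operation preserves these properties and $\mathcal{H}$-limits can be taken along an a.s.-uniformly convergent subsequence.)

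For the moment bound \eqref{eq:Est2ndMoment}, I would go back to the equation satisfied by $X$ itself: applying the same three estimates as above but now to $X=\Phi(X)$ on $[t,s]$ gives
\begin{align*}
\phi(s):=\mathbb{E}\Big(\sup_{t\le r\le s}|X(r;t,x)|^2\Big) \le K_1'\Big(1+|x|^2 + \int_t^s \phi(r)\,dr\Big),
\end{align*}
with $K_1'=K_1'(C,T,M)$; since $\phi$ is finite (as $X\in\mathcal{H}$) and locally bounded, Gr\"onwall's inequality yields $\phi(T)\le K_1(C,T,M)(1+|x|^2)$, which is \eqref{eq:Est2ndMoment}. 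The main technical obstacle is the jump term: one must invoke the correct $L^2$-maximal inequality (BDG) for the compensated Poisson stochastic integral and verify, via Assumption \ref{assump:SDECoefs}(iii)(b) (the finiteness of $M=\int\rho^2\,\nu$), that all constants depend only on $C$, $T$, and $M$ as claimed; everything else is the classical Picard--Lindel\"of scheme. Since these computations are entirely routine, and the reader is referred to \cite[Section 6.2]{Appelbaum:2009}, \cite[Section 1.3]{OksendalSulem:2007}, or \cite{SwiechZabczyk:2016} for full details, we omit them.
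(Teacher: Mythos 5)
Your proof is correct and follows the standard Picard--Lindel\"of contraction scheme in $\mathcal{H}$ combined with BDG (or Doob plus It\^o isometry) for the stochastic integrals and Gr\"onwall for the moment bound, which is exactly the argument the paper declares ``quite standard'' and omits, referring the reader to \cite{Appelbaum:2009}, \cite{OksendalSulem:2007}, and \cite{SwiechZabczyk:2016}. A minor remark: since Assumption \ref{assump:SDECoefs}(iii) gives boundedness of $b$, $\sigma$, and $\gamma/\rho$ (not merely linear growth), the estimate \eqref{eq:Est2ndMoment} actually follows directly without Gr\"onwall, though your Gr\"onwall route is equally valid.
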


We notice that, when $x\in O$, which is a bounded subset of $\mathbb{R}^{d}$, the right-hand side of \eqref{eq:Est2ndMoment} is bounded by a constant independent of $x$. For any $(t,x)\in Q^{0}$, let
\begin{align*}
\tau(t,x):=\inf\left\{s\in[t,T]:\,\left(s,X(s;t,x)\right)\not\in Q\right\},
\end{align*}
with the convention that $\inf\emptyset=T$. Throughout this article, when there is no confusion of initial condition, we will skip the initial data $(t,x)$ in the expressions of SDE solutions and exit times, and write $X(s)$ and $\tau$ instead of $X(s;t,x)$ and $\tau(t,x)$, respectively. Clearly,
\begin{align*}
\left(\tau,X(\tau)\right)\in\partial_{\text{np}}Q:=\left([0,T)\times O^{c}\right)\cup\left(\{T\}\times\mathbb{R}^{d}\right).
\end{align*}
Define the cost functional as
\begin{align}\label{eq:CostFunt}
J_{\mu}\left(t,x;U\right):=\mathbb{E}\left(\int_{t}^{\tau(t,x)}\Gamma\left(s,X(s;t,x),U(s)\right)ds+\Psi\left(\tau(t,x),X(\tau(t,x);t,x)\right)\right).
\end{align}
\begin{assumption}\label{assump:GammaPsi}
Throughout this article, we make the following assumptions on $\Gamma$ and $\Psi$.
\begin{itemize}
\item [(i)] $\Psi:\overline{Q^{0}}\rightarrow\mathbb{R}$ is a bounded continuous function.
\item [(ii)] $\Gamma:\overline{Q^{0}}\times\mathcal{U}\rightarrow\mathbb{R}$ is a bounded uniformly continuous function.
\end{itemize}
\end{assumption}

We will consider the stochastic control problem by first taking the infimum of the cost functional \eqref{eq:CostFunt} over all $U\in\mathcal{A}_{\mu}$, i.e.,
\begin{align}\label{eq:ValueFunt1}
V_{\mu}(t,x):=\inf_{U\in\mathcal{A}_{\mu}}J_{\mu}\left(t,x;U\right),
\end{align}
and then by taking the infimum of \eqref{eq:ValueFunt1} over all generalized reference probability spaces, i.e.,
\begin{align*}
V(t,x):=\inf_{\mu}V_{\mu}(t,x)=\inf_{U\in\mathcal{A}_t}J_{\mu}\left(t,x;U\right).
\end{align*}
The corresponding HJB equation is then given by
\begin{align}\label{eq:HJB}
\inf_{u\in\mathcal{U}}\left(\mathscr{A}^{u}W(t,x)+\Gamma(t,x,u)\right)=0\quad\text{in }\,Q,
\end{align}
with terminal-boundary condition
\begin{align}\label{eq:TermBoundCondHJB}
W(t,x)=\Psi(t,x),\quad (t,x)\in\partial_{\text{np}}Q,
\end{align}
where
\begin{align}
\mathscr{A}^{u}W(t,x)&:=\frac{\partial W}{\partial t}(t,x)+\frac{1}{2}\text{tr}\left(a(t,x,u)D_{x}^{2}W(t,x)\right)+b(t,x,u)\cdot D_{x}W(t,x)\nonumber\\
\label{eq:GenX} &\quad\,\,+\int_{\mathbb{R}^{m_{2}}_{0}}\left(W\!\left(t,x\!+\!\gamma(t,x,u,z)\right)-W(t,x)-D_{x}W(t,x)\cdot\gamma(t,x,u,z)\right)\nu(dz),
\end{align}
and where $a(t,x,u):=\sigma(t,x,u)\sigma^T(t,x,u)$.

We now introduce the assumptions about the bounded domain $O$. Since $O$ is bounded, Assumptions \ref{assump:SDECoefs} and \ref{assump:GammaPsi} are quite standard. However to apply the integro-PDE results of
\cite{Mou1:2016}, we will need extra conditions on $O$ and $\sigma$, Assumptions \ref{assump:Domain O}
and \ref{assump:ellpiticity along boundary}. More precisely, these two assumptions allow to construct viscosity sub/supersolutions 
of various approximating equations having uniform modulus of continuity in Section \ref{sec:VisConstrn}. They are
used to apply the existence and regularity results for solutions to our non-local HJB equations (see Theorem \ref{thm:uer} and the discussion after the proof of Theorem \ref{thm:wdeltan}) and to obtain uniform convergence of solutions of various approximating equations (see Lemmas \ref{lem:ConvWdeltanWdelta}, \ref{lem:ConvOverlineWdeltanWdelta} and \ref{lem:Comparison}).

For a set $\widetilde{O}\subset\mathbb{R}^{d}$, we define the proximal normal cone to $\widetilde{O}$ at $x\in\partial\widetilde{O}$ by
\begin{align*}
N(\widetilde{O},x):=\left\{n\in\mathbb{R}^{d}:\,\text{there exists $\ell>0$, such that }x\in P(\widetilde{O},x+\ell n)\right\},
\end{align*}
where
\begin{align*}
P(\widetilde{O},y)=\left\{z\in\partial\widetilde{O}:\,\inf_{p\in\widetilde{O}}|p-y|=|z-y|\right\}.
\end{align*}
The set $\widetilde{O}$ is said to be \emph{$\eta$-prox-regular} for some $\eta>0$ if, for any $x\in\partial\widetilde{O}$ and any unit vector $n\in N(\widetilde{O},x)$, we have
\begin{align*}
B_{\eta}(x+\eta n)\cap\widetilde{O}=\emptyset,
\end{align*}
where, and hereafter, $B_{r}(y)$ (respectively, $\overline{B}_{r}(y)$) denotes the open (respectively, closed) ball in $\mathbb{R}^{d}$ centered at $y$ with radius $r>0$. We refer the reader to, e.g., \cite{Bounkhel:2012,ClarkeLedyaevSternWolenski:1998,ClarkeSternWolenski:1995,PoliquinRockafellarThibault:2000}, for the properties of $\eta$-prox-regular sets.
\begin{figure}
\centering
\includegraphics[width=8.5cm]{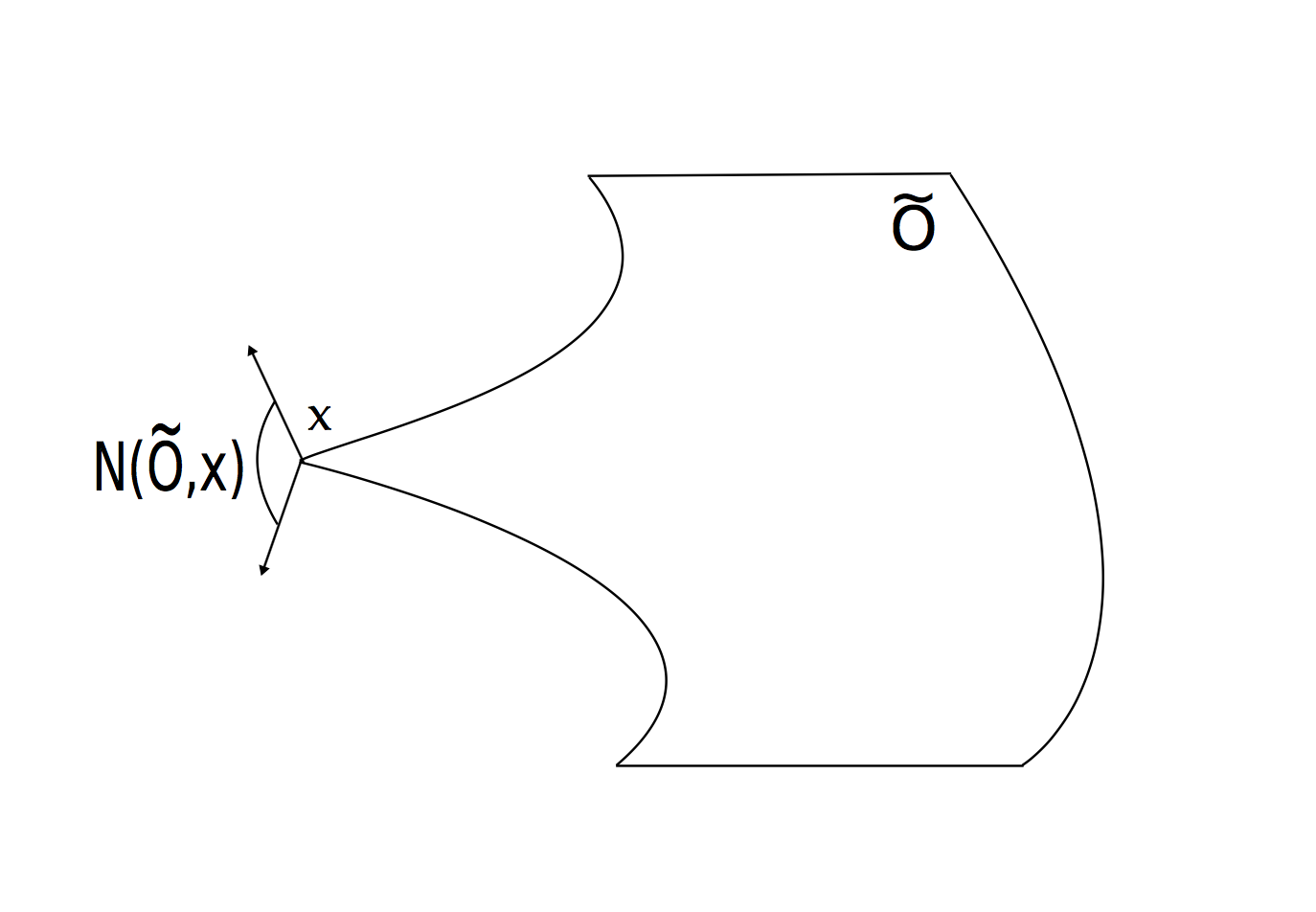}\\
\caption{\scriptsize{An $\eta$-prox-regular set}}
\end{figure}

\begin{assumption}\label{assump:Domain O}
Throughout this article, we assume that $O$ is $\eta$-prox-regular, for some fixed $0<\eta<1$.
\end{assumption}
\begin{theorem}\label{thm:proxdomain}
If $O$ is $\eta$-prox-regular for some $\eta>0$, then, for any $0<\delta<\eta/2$, the set $O_{\delta}:=\{y\in\mathbb{R}^{d}:\,\text{dist}\,(y,O)<\delta\}$ satisfies the uniform exterior ball condition with a uniform radius $\eta/2$, i.e., for any $x\in\partial O_{\delta}$, there exists $y_{x}\in O_{\delta}^{c}$, such that $\overline{B}_{\eta/2}(y_{x})\cap\overline{O}_{\delta}=\{x\}$.
\end{theorem}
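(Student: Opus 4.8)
The plan is to show that for $0<\delta<\eta/2$ the enlarged set $O_\delta$ inherits a uniform exterior ball property from the $\eta$-prox-regularity of $O$. First I would recall the geometric meaning of $\eta$-prox-regularity: for every boundary point $p\in\partial O$ with unit proximal normal $n\in N(O,p)$, the open ball $B_\eta(p+\eta n)$ does not meet $O$. Equivalently, every point $q$ with $\text{dist}(q,O)<\eta$ has a \emph{unique} nearest point $\pi(q)\in\overline O$, and the (unique) unit proximal normal at $\pi(q)$ points in the direction $q-\pi(q)$. I would use this to give a clean description of $O_\delta$ and $\partial O_\delta$: since $\delta<\eta$, we have $O_\delta=\{y:\text{dist}(y,O)<\delta\}$ and $\partial O_\delta=\{y:\text{dist}(y,O)=\delta\}$, and for each $x\in\partial O_\delta$ there is a unique $p=\pi(x)\in\partial O$ with $|x-p|=\delta$ and $x=p+\delta n$ where $n\in N(O,p)$ is the unit proximal normal.

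Next I would exhibit the exterior ball. Given $x\in\partial O_\delta$ with associated $p$ and $n$ as above, I claim the point $y_x:=p+(\delta+\eta/2)n=x+(\eta/2)n$ works, i.e. $\overline B_{\eta/2}(y_x)\cap\overline{O}_\delta=\{x\}$. One inclusion is immediate: $x\in\overline B_{\eta/2}(y_x)$ since $|x-y_x|=\eta/2$, and $x\in\overline O_\delta$. For the reverse inclusion, take any $w\in\overline B_{\eta/2}(y_x)$ with $w\neq x$; I must show $\text{dist}(w,O)>\delta$, hence $w\notin\overline O_\delta$. The key estimate is that the ball $\overline B_{\eta/2}(y_x)$ sits inside the larger ball $\overline B_{\delta+\eta/2}(y_x)$ centered at $y_x$ which — because $y_x=p+(\delta+\eta/2)n$ lies on the ray from $p$ in the direction $n$ at distance $\delta+\eta/2$, and this distance is $<\eta+\delta$ — I want to relate to the prox-regularity ball at $p$. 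Concretely, I would argue: for any $q\in O$, the prox-regularity condition $B_\eta(p+\eta n)\cap O=\emptyset$ gives $|q-(p+\eta n)|\geq\eta$; combining with the triangle inequality along the ray, one gets a lower bound $|q-y_x|\geq \delta+\eta/2$ with equality only in the degenerate direction, forcing $\text{dist}(y_x,O)\geq \delta+\eta/2$ and moreover that the nearest point of $O$ to $y_x$ is exactly $p$. Then for $w$ with $|w-y_x|\leq\eta/2$ we get $\text{dist}(w,O)\geq \text{dist}(y_x,O)-|w-y_x|\geq (\delta+\eta/2)-\eta/2=\delta$, and I need to upgrade this to a strict inequality whenever $w\neq x$, which follows because equality in all the triangle inequalities forces $w$ to lie on the segment $[p,y_x]$ at distance exactly $\delta$ from $p$, i.e. $w=x$.

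To make the lower bound $\text{dist}(y_x,O)\geq\delta+\eta/2$ rigorous I would phrase it through the standard equivalence between $\eta$-prox-regularity and the statement that the nearest-point projection $\pi$ onto $\overline O$ is single-valued and continuous (indeed $1$-Lipschitz, or at least well-behaved) on the open tube $\{z:\text{dist}(z,O)<\eta\}$, together with the fact that along a proximal normal ray the nearest point stays fixed: $\pi(p+tn)=p$ for all $0\le t<\eta$, and in fact $|p+tn - q|^2 \ge |p+tn-p|^2$ expands, using $q\in O$ and the defining inequality of the proximal normal cone with modulus (the "$\langle q-p, n\rangle \le \frac{1}{2\eta}|q-p|^2$" inequality), to give $|p+tn-q|\ge t$ for $t\le\eta$ and, with a short computation, $|p+tn-q|\ge t$ extends past $\eta$ exactly to the range $t<\delta+\eta$ that we need since $\delta+\eta/2<\delta+\eta$. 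This inequality $\langle q-p,n\rangle\le\frac1{2\eta}|q-p|^2$ for all $q\in O$ is itself just a restatement of $B_\eta(p+\eta n)\cap O=\emptyset$, so everything reduces to elementary inner-product algebra once the right nearest-point point $p=\pi(x)$ is identified.

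I expect the main obstacle to be the careful bookkeeping around uniqueness and the boundary structure of $O_\delta$: one must verify that every $x\in\partial O_\delta$ really does have a \emph{unique} nearest point $p\in\partial O$ at distance exactly $\delta$ and that the direction $n=(x-p)/\delta$ is a \emph{unit} proximal normal to $O$ at $p$ (not merely some normal), since the whole argument hinges on invoking the prox-regularity ball at that specific $p$ with that specific $n$. This is where one genuinely uses $\delta<\eta/2$ (rather than just $\delta<\eta$): the extra slack keeps the exterior ball of radius $\eta/2$ strictly inside the tube where projections behave well, and prevents the exterior balls at nearby boundary points from "colliding." The rest — the triangle-inequality estimates and the strictness of the final inequality — is routine Euclidean geometry. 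I would also remark that the conclusion is stated with the exterior ball touching $\overline O_\delta$ at the single point $x$, so the strictness argument in the last step is not optional and deserves the short verification that equality throughout forces $w=x$.
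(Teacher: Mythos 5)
Your approach coincides with the paper's: for $x\in\partial O_\delta$ pick a nearest point $p$ on $\partial O$ (the paper's $\tilde x$, with $|x-p|=\delta$), note that $n:=(x-p)/\delta$ is a unit proximal normal, and center the exterior ball at $y_x=x+(\eta/2)n=p+(\delta+\eta/2)n$, which is exactly the paper's point $\tilde x+(\eta/2+\delta)(x-\tilde x)/\delta$. You simply spell out the verification the paper labels ``easy to see,'' which is fine.

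There is, however, a wrong claim in your third paragraph: that the inequality $\mathrm{dist}(p+tn,O)\ge t$ ``extends past $\eta$ exactly to the range $t<\delta+\eta$.'' It does not. From $\langle q-p,n\rangle\le\frac{1}{2\eta}|q-p|^{2}$ one obtains, for $q\in O$,
\begin{align*}
|p+tn-q|^{2}=|p-q|^{2}-2t\langle q-p,n\rangle+t^{2}\ge |p-q|^{2}\Bigl(1-\tfrac{t}{\eta}\Bigr)+t^{2},
\end{align*}
and this dominates $t^{2}$ only when $1-t/\eta\ge 0$, i.e.\ $t\le\eta$; for $t>\eta$ the first term is negative and unbounded over $q\in O$, so no such extension is available. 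You also do not need it. The value you actually use is $t=\delta+\eta/2$, and the hypothesis $\delta<\eta/2$ is exactly the statement $\delta+\eta/2<\eta$; this --- not the vaguer idea of ``exterior balls colliding'' --- is the precise reason $\delta<\eta/2$ enters, as it places the center $y_x$ inside the $\eta$-tube around $\overline O$ where the prox-regularity estimate and uniqueness of the nearest point hold. Your direct triangle-inequality argument in the second paragraph, $|q-y_x|\ge|q-(p+\eta n)|-|(p+\eta n)-y_x|\ge\eta-(\eta/2-\delta)=\delta+\eta/2$, is correct and already suffices; once the spurious extension claim and the rationale built on it are replaced by the simple observation $\delta+\eta/2<\eta$, your proof is complete and matches the paper's.
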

\begin{proof}
For any $x\in\partial O_{\delta}$, there exists $\tilde{x}\in\partial O$ such that $|x-\tilde{x}|=\delta$, and thus $x-\tilde{x}\in N(O,\tilde{x})$. Since $O$ is $\eta$-prox-regular, we have $\tilde{x}\in\overline{B}_{\eta}(\tilde{x}+\eta(x-\tilde{x})/\delta)\cap\overline{O}$ and $\overline{B}_{\eta}(\tilde{x}+\eta(x-\tilde{x})/\delta)\cap O=\emptyset$. It is then easy to see that
\begin{align*}
\overline{B}_{\eta/2}\left(\tilde{x}+\left(\frac{\eta}{2}+\delta\right)\frac{x-\tilde x}{\delta}\right)\cap\overline{O}_{\delta}=\{x\},
\end{align*}
which completes the proof.
\end{proof}

Throughout this paper, we make the following parabolicity assumption along the boundary $\partial O$.
\begin{assumption}\label{assump:ellpiticity along boundary}
There exists a constant $\lambda>0$ such that, for any $x\in\partial O$ and $n_{x}\in N(O,x)$,
\begin{align*}
n_{x}\,\sigma(t,x,u)\,\sigma^{T}(t,x,u)\,n_{x}^{T}\geq\lambda,\quad\text{for any }\,t\in [0,T)\,\,\,\text{and}\,\,\,u\in\mathcal{U}.
\end{align*}
\end{assumption}

Throughout the paper, we will use the following function spaces on cylindrical regions $\mathcal Q=[a,b)\times\mathcal{O}$, where $a<b$ and $\mathcal{O}$ is an open subset of $\mathbb{R}^{d}$. ${\rm USC}(\mathcal{Q})$ (respectively, ${\rm LSC}(\mathcal{Q})$) is the space of upper (respectively, lower) semi-continuous functions on $\mathcal{Q}$. $C(\mathcal{Q})$ (respectively, $C(\overline{\mathcal{Q}})$) is the space of continuous functions on $\mathcal{Q}$ (respectively, $\overline{\mathcal{Q}}$). ${\rm Lip}(\mathcal{Q})$ denotes the space of Lipschitz continuous functions on $\mathcal{Q}$. $C^{1,2}(\mathcal{Q})$ is the space of functions $\varphi:\mathcal{Q}\rightarrow\mathbb{R}$ such that $\varphi$, $\varphi_{t}$, $D_{x}\varphi$, and $D_{x}^{2}\varphi$ are continuous on $\mathcal{Q}$. $C^{1,2}(\overline{\mathcal{Q}})$ is the space of functions $\varphi\in C^{1,2}(\mathcal{Q})$ such that $\varphi$, $\varphi_{t}$, $D_{x}\varphi$, and $D_{x}^{2}\varphi$ extend continuously to $\overline{\mathcal{Q}}$. $C^{1+\alpha/2,\,2+\alpha}(\mathcal{Q})$, where $\alpha\in(0,1)$, is the space of functions $\varphi\in C^{1,2}(\mathcal{Q})$ such that
\begin{align}\label{c2alpha}
&\left\|\varphi\right\|_{L^\infty(\mathcal{Q})}+\left\|\varphi_{t}\right\|_{L^\infty(\mathcal{Q})}+\left\|D_{x}\varphi\right\|_{L^\infty(\mathcal{Q})}+\left\|D_{x}^{2}\varphi\right\|_{L^\infty(\mathcal{Q})}\\
&\!+\!\sup_{\substack{(t,x),(s,y)\in\mathcal{Q} \\ (t,x)\neq(s,y)}}\!\!\frac{\left|\varphi(t,x)\!-\!\varphi(s,y)\!-\!\varphi_{t}(s,y)(t\!-\!s)\!-\!D_{x}\varphi(s,y)\!\cdot\!(x\!-\!y)\!-\!(x\!-\!y)^{T}\!D_{x}^{2}\varphi(s,y)(x\!-\!y)/2\right|}{\left(|t-s|+|x-y|^{2}\right)^{1+\alpha/2}}<\infty.
\nonumber
\end{align}
If $\varphi\in C^{1+\alpha/2,\,2+\alpha}(Q^0)$ then $\varphi$, $\varphi_{t}$, $D_{x}\varphi$, $D_{x}^{2}\varphi$ extend continuously to 
$\overline{Q^0}$ and \eqref{c2alpha} is satisfied with $\mathcal{Q}=Q^0$ replaced by $\mathcal{Q}=\overline{Q^0}$. To emphasize that functions in $C^{1+\alpha/2,\,2+\alpha}(Q^0)$ are defined on $\overline{Q^0}$, we will denote this space by 
$C^{1+\alpha/2,\,2+\alpha}(\overline{Q^0})$.
$C^{1+\alpha/2,\,2+\alpha}_{\rm loc}(\mathcal{Q})$ is the space of functions $\varphi:\mathcal{Q}\rightarrow\mathbb{R}$ such that, $\varphi\in
C^{1+\alpha/2,\,2+\alpha}(\widetilde{\mathcal{Q}})$ for any cylindrical region $\widetilde{\mathcal{Q}}\subset\subset\mathcal{Q}$. Finally, the space ${\rm USC}_{b}(\mathcal{Q})$ (respectively, ${\rm LSC}_{b}(\mathcal{Q})$, $C_{b}(\mathcal{Q})$, $C_{b}(\overline{\mathcal{Q}})$,
${\rm Lip}_{b}(\mathcal{Q})$, $C_{b}^{1,2}(\mathcal{Q})$, $C_{b}^{1,2}(\overline{\mathcal{Q}})$) consists of functions in ${\rm USC}(\mathcal{Q})$ (respectively, ${\rm LSC}(\mathcal{Q})$, $C(\mathcal{Q})$, $C(\overline{\mathcal{Q}})$, ${\rm Lip}(\mathcal{Q})$, $C^{1,2}(\mathcal{Q})$, $C^{1,2}(\overline{\mathcal{Q}})$) which are bounded on their respective domains.

To conclude this subsection, we recall the definition of a viscosity solution to \eqref{eq:HJB}.
\begin{definition}\label{def:VisSol}
A function $u\in {\rm USC}_{b}(Q^{0})$ is a viscosity subsolution to \eqref{eq:HJB} if, whenever $u-\varphi$ has a maximum over $Q^{0}$ at $(t,x)\in Q$ for some test function $\varphi\in C_{b}^{1,2}(Q^{0})$,
\begin{align*}
\inf_{u\in\mathcal{U}}\left(\mathscr{A}^{u}\varphi(t,x)+\Gamma(t,x,u)\right)\geq 0.
\end{align*}
A function $u\in {\rm LSC}_b(Q^{0})$ is a viscosity supersolution to \eqref{eq:HJB} if, whenever $u-\varphi$ has a minimum over $Q^{0}$ at $(t,x)\in Q$ for some test function $\varphi\in C_{b}^{1,2}(Q^{0})$,
\begin{align*}
\inf_{u\in\mathcal{U}}\left(\mathscr{A}^{u}\varphi(t,x)+\Gamma(t,x,u)\right)\leq0.
\end{align*}
A function $u\in C_{b}(Q^{0})$ is a viscosity solution to \eqref{eq:HJB} if it is both a viscosity subsolution and a viscosity supersolution to \eqref{eq:HJB}.
\end{definition}

\subsection{Preliminary Estimates}\label{subsec:PreEst}

In this subsection, we prove various estimates for strong solutions to \eqref{eq:MainSDEs}. The proofs of these results follow
rather standard lines of arguments however, since we could not find exact references, we equip them with short proofs for completeness and for the reader's convenience.
\begin{lemma}\label{lem:2MomentEst}
Let Assumption \ref{assump:SDECoefs} be valid. For any $(t,x)\in Q^{0}$, any generalized reference probability space $\mu=(\Omega,\mathscr{F},\mathscr{F}_{s}^{t},\mathbb{P},\mathcal{W},\mathcal{L})$, and any $U\in\mathcal{A}_{\mu}$, let $X(s;t,x)$ be the unique strong c\`{a}dl\`{a}g solution to \eqref{eq:MainSDEs}. Then, for any $t\leq\ell_{1}<\ell_{2}\leq T$,
\begin{align*}
\mathbb{E}\left(\sup_{s\in[\ell_{1},\ell_{2}]}\left|X(s;t,x)-X(\ell_{1};t,x)\right|^{2}\right)\leq K_{2}\left(\ell_{2}-\ell_{1}\right),
\end{align*}
where $K_{2}=K_{2}(C,T,M)>0$ is a constant depending on $C$, $T$, and $M$.
\end{lemma}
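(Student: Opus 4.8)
The plan is to bound the increment $X(s)-X(\ell_1)$ by splitting the three stochastic integrals over $[\ell_1,s]$ and applying standard martingale and moment inequalities, then closing the estimate with Gronwall's lemma. Writing $X(s)-X(\ell_1)=\int_{\ell_1}^s b(r,X(r),U(r))\,dr+\int_{\ell_1}^s\sigma(r,X(r),U(r))\,d\mathcal{W}(r)+\int_{\ell_1}^s\int_{\mathbb{R}^{m_2}_0}\gamma(r,X(r-),U(r),z)\,\widetilde N(dr,dz)$, I would take $\sup_{s\in[\ell_1,\ell_2]}|\cdot|^2$, use $|a+b+c|^2\leq 3(|a|^2+|b|^2+|c|^2)$, and estimate each term. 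For the drift term, Cauchy--Schwarz gives $\mathbb{E}\sup_{s}|\int_{\ell_1}^s b\,dr|^2\leq (\ell_2-\ell_1)\,\mathbb{E}\int_{\ell_1}^{\ell_2}|b|^2\,dr\leq C^2(\ell_2-\ell_1)^2$ by the $L^\infty$ bound \eqref{eq:inftybounds}. For the Brownian term, the Burkholder--Davis--Gundy (or Doob $L^2$) inequality gives $\mathbb{E}\sup_s|\int_{\ell_1}^s\sigma\,d\mathcal{W}|^2\leq 4\,\mathbb{E}\int_{\ell_1}^{\ell_2}\|\sigma\|^2\,dr\leq 4C^2(\ell_2-\ell_1)$. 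For the jump term, BDG for purely discontinuous martingales together with the compensated-measure isometry yields $\mathbb{E}\sup_s|\int_{\ell_1}^s\int\gamma\,\widetilde N|^2\leq c\,\mathbb{E}\int_{\ell_1}^{\ell_2}\int_{\mathbb{R}^{m_2}_0}|\gamma(r,X(r-),U(r),z)|^2\,\nu(dz)\,dr\leq c\,C^2\!\left(\int_{\mathbb{R}^{m_2}_0}\rho^2(z)\,\nu(dz)\right)(\ell_2-\ell_1)=c\,C^2 M(\ell_2-\ell_1)$, using the bound $|\gamma(\cdot,\cdot,\cdot,z)|\leq C\rho(z)$ from \eqref{eq:inftybounds} and the definition of $M$ in Assumption~\ref{assump:SDECoefs}(iii).

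Since all three right-hand bounds are of the form $(\text{const depending on }C,T,M)\cdot(\ell_2-\ell_1)$ — note $(\ell_2-\ell_1)^2\leq T(\ell_2-\ell_1)$ — summing gives the claimed inequality with $K_2=K_2(C,T,M)$ directly, \emph{provided} the relevant suprema are known to be finite so the manipulations are justified. That finiteness is exactly what Theorem~\ref{thm:StrSolsSDEs} supplies: $X\in\mathcal{H}$, so $\mathbb{E}(\sup_{s\in[t,T]}|X(s)|^2)<\infty$, which in particular makes $\mathbb{E}\sup_{s\in[\ell_1,\ell_2]}|X(s)-X(\ell_1)|^2\leq 4\,\mathbb{E}\sup_{s\in[t,T]}|X(s)|^2<\infty$. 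Because the $L^\infty$ bounds on $b$, $\sigma$, and $\gamma(\cdot,\cdot,\cdot,z)/\rho(z)$ hold uniformly, the right-hand sides above do not depend on $X$ at all, so in fact \emph{no} Gronwall argument is needed here: the bounded-coefficient assumption \eqref{eq:inftybounds} makes the estimate completely explicit. (Gronwall would be needed only if one used the Lipschitz bounds instead of the $L^\infty$ bounds, e.g.\ to compare two solutions; for this lemma the $L^\infty$ bounds suffice and are cleaner.)

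The only genuinely delicate point is the jump-term estimate: one must invoke the correct form of the Burkholder--Davis--Gundy inequality for $\mathbb{R}^d$-valued stochastic integrals against a compensated Poisson random measure, namely that for $p=2$ the quadratic-variation control reduces to the $L^2(\Omega\times[\ell_1,\ell_2]\times\mathbb{R}^{m_2}_0,\,\mathbb{P}\otimes dr\otimes\nu)$ norm of the integrand, and to note that the integrand $r\mapsto\gamma(r,X(r-),U(r),z)$ is predictable (so the integral is well defined) and square-integrable in that norm by \eqref{eq:inftybounds} together with $M<\infty$. I would cite \cite[Section 4.2]{Appelbaum:2009} or \cite[Section 1.3]{OksendalSulem:2007} for this. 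Everything else — Cauchy--Schwarz on the drift, Doob/BDG on the Brownian term — is routine, so the proof will be short, matching the authors' stated intention to "equip them with short proofs for completeness."
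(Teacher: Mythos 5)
Your proof is correct and follows essentially the same route as the paper's: split the increment into drift, Brownian, and compensated-jump terms, apply Cauchy--Schwarz to the drift and Burkholder--Davis--Gundy to the two martingale parts, and then conclude directly from the $L^\infty$ bounds in \eqref{eq:inftybounds} and the finiteness of $M$ — exactly as you eventually note, no Gronwall argument is needed (your opening sentence mentions Gronwall, but you correctly retract it). The only cosmetic difference is that the paper bounds the drift contribution by $3T\,\mathbb{E}\int_{\ell_1}^{\ell_2}|b|^2\,dr$ from the start rather than by $(\ell_2-\ell_1)\,\mathbb{E}\int_{\ell_1}^{\ell_2}|b|^2\,dr$ followed by $(\ell_2-\ell_1)^2\le T(\ell_2-\ell_1)$.
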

\begin{proof}
By Assumption \ref{assump:SDECoefs}, Cauchy-Schwarz and Burkholder-Davis-Gundy inequalities, there exists a universal constant $\Lambda_{1}>0$, such that, denoting $X(s)=X(s;t,x)$,
\begin{align*}
&\mathbb{E}\left(\sup_{s\in[\ell_{1},\ell_{2}]}\left|X(s)-X(\ell_{1})\right|^{2}\right)\\
&\quad\leq 3\,\mathbb{E}\left(\sup_{s\in[\ell_{1},\ell_{2}]}\left|\int_{\ell_{1}}^{s}b\left(r,X(r),U(r)\right)dr\right|^{2}\right)+3\,\mathbb{E}\left(\sup_{s\in[\ell_{1},\ell_{2}]}\!\left|\int_{\ell_{1}}^{s}\sigma\left(r,X(r),U(r)\right)d\mathcal{W}(r)\right|^{2}\right)\\
&\qquad +3\,\mathbb{E}\left(\sup_{s\in[\ell_{1},\ell_{2}]}\left|\int_{\ell_{1}}^{s}\int_{\mathbb{R}^{m_{2}}_{0}}\gamma\left(r,X(r-),U(r),z\right)\widetilde{N}(dr,dz)\right|^{2}\right)\\
&\quad\leq 3T\,\mathbb{E}\left(\int_{\ell_{1}}^{\ell_{2}}\left|b\left(r,X(r),U(r)\right)\right|^{2}dr\right)+3\Lambda_{1}\,\mathbb{E}\left(\int_{\ell_{1}}^{\ell_{2}}\left\|\sigma\left(r,X(r),U(r)\right)\right\|^{2}dr\right)
\end{align*}
\begin{align*}
&\qquad +3\Lambda_{1}\,\mathbb{E}\left(\int_{\ell_{1}}^{\ell_{2}}\int_{\mathbb{R}_{0}^{m_{2}}}\left|\gamma\left(r,X(r-),U(r),z\right)\right|^{2}\nu(dz)\,dr\right)\\
&\quad\leq 3C^{2}\left(T+\Lambda_{1}+\Lambda_{1}M\right)(\ell_{2}-\ell_{1}).
\end{align*}
Letting $K_{2}:=3C^{2}(T+\Lambda_{1}+\Lambda_{1}M)$ completes the proof of the lemma.
\end{proof}

Let $\tilde{b}:\overline{Q^{0}}\times\mathcal{U}\rightarrow\mathbb{R}^{d}$, $\widetilde{\sigma}:\overline{Q^{0}}\times\mathcal{U}\rightarrow\mathbb{R}^{d\times m_{1}}$, and $\widetilde{\gamma}:\overline{Q^{0}}\times\mathcal{U}\times\mathbb{R}^{m_{2}}\rightarrow\mathbb{R}^{d}$. For any generalized reference probability space $\mu=(\Omega,\mathscr{F},\mathscr{F}_{s}^{t},\mathbb{P},\mathcal{W},\mathcal{L})$, any control process $U\in\mathcal{A}_{\mu}$, and any $x\in\mathbb{R}^{d}$, consider another controlled SDE
\begin{align}
\widetilde{X}(s;t,x)&=x+\int_{t}^{s}\tilde{b}\left(r,\widetilde{X}(r;t,x),U(r)\right)dr+\int_{t}^{s}\widetilde{\sigma}\left(r,\widetilde{X}(r;t,x),U(r)\right)d\mathcal{W}(r)\nonumber\\
\label{eq:CompareSDEs} &\quad +\int_{t}^{s}\int_{\mathbb{R}^{m_{2}}_{0}}\widetilde{\gamma}\left(r,\widetilde{X}(r-;t,x),U(r),z\right)\widetilde{N}(dr,dz),\quad s\in[t,T].
\end{align}
When the coefficient functions $\tilde{b}$, $\widetilde{\sigma}$, and $\widetilde{\gamma}$ satisfy Assumption \ref{assump:SDECoefs}, for any $\mu=(\Omega,\mathscr{F},\mathscr{F}_{s}^{t},\mathbb{P},\mathcal{W},\mathcal{L})$, $U\in\mathcal{A}_{\mu}$, and $x\in\mathbb{R}^{d}$, Theorem \ref{thm:StrSolsSDEs} ensures that there exists a unique strong c\`{a}dl\`{a}g solution $\widetilde{X}(s;t,x)$ to \eqref{eq:CompareSDEs}. For any $(t,x)\in Q^{0}$, let
\begin{align*}
\widetilde{\tau}(t,x):=\inf\left\{s\in[t,T]:\,\left(s,\widetilde{X}(s;t,x)\right)\not\in Q\right\}.
\end{align*}
\begin{lemma}\label{lem:CompStrSols}
Let the coefficient functions $\tilde{b}$, $\widetilde{\sigma}$, and $\widetilde{\gamma}$ satisfy Assumption \ref{assump:SDECoefs}. For any $(t,x)\in Q^{0}$, any generalized reference probability space $\mu=(\Omega,\mathscr{F},\mathscr{F}_{s}^{t},\mathbb{P},\mathcal{W},\mathcal{L})$, and any $U\in\mathcal{A}_{\mu}$, let $\widetilde{X}(s;t,x)$ (respectively, $X(s;t,x)$) be the unique strong c\`{a}dl\`{a}g solution to \eqref{eq:CompareSDEs} (respectively, \eqref{eq:MainSDEs}). Then, there exist a constant $K_{3}=K_{3}(C,T,M)>0$, depending only on $C$, $T$, and $M$, such that
\begin{align*}
\mathbb{E}\left(\sup_{s\in[t,T]}\left|X(s;t,x)-\widetilde{X}(s;t,x)\right|^{2}\right)&\leq K_{3}\left(\big\|b-\tilde{b}\big\|_{L^{\infty}(\overline{Q^{0}}\times\mathcal{U})}^{2}+\left\|\sigma-\widetilde{\sigma}\right\|_{L^{\infty}(\overline{Q^{0}}\times\mathcal{U})}^{2}\right)\\
&\quad +K_{3}\int_{\mathbb{R}^{m_{2}}_{0}}\left\|\gamma(\cdot,\cdot,\cdot,z)-\widetilde{\gamma}(\cdot,\cdot,\cdot,z)\right\|_{L^{\infty}(\overline{Q^{0}}\times\mathcal{U})}^{2}\nu(dz).
\end{align*}
\end{lemma}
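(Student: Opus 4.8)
The plan is to run the standard subtract--split--Burkholder--Davis--Gundy--Gronwall argument. Write $\Delta(s):=X(s;t,x)-\widetilde{X}(s;t,x)$. Subtracting \eqref{eq:CompareSDEs} from \eqref{eq:MainSDEs}, $\Delta(s)$ is the sum of a drift integral of $b(r,X(r),U(r))-\tilde b(r,\widetilde{X}(r),U(r))$, an $\mathcal{W}$-stochastic integral of $\sigma(r,X(r),U(r))-\widetilde{\sigma}(r,\widetilde{X}(r),U(r))$, and a $\widetilde{N}$-integral of $\gamma(r,X(r-),U(r),z)-\widetilde{\gamma}(r,\widetilde{X}(r-),U(r),z)$. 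Fix $\ell\in(t,T]$, apply $|a+b+c|^{2}\le 3(|a|^{2}+|b|^{2}+|c|^{2})$, take $\sup_{s\in[t,\ell]}$ and expectations, and estimate the drift term by Cauchy--Schwarz and the two martingale terms by the Burkholder--Davis--Gundy inequality, exactly as in the proof of Lemma \ref{lem:2MomentEst}; this yields a universal constant $\Lambda_{1}>0$ such that, with $\phi(\ell):=\mathbb{E}(\sup_{s\in[t,\ell]}|\Delta(s)|^{2})$,
\begin{align*}
\phi(\ell)&\le 3T\,\mathbb{E}\int_{t}^{\ell}\big|b(r,X(r),U(r))-\tilde b(r,\widetilde{X}(r),U(r))\big|^{2}dr\\
&\quad+3\Lambda_{1}\,\mathbb{E}\int_{t}^{\ell}\big\|\sigma(r,X(r),U(r))-\widetilde{\sigma}(r,\widetilde{X}(r),U(r))\big\|^{2}dr\\
&\quad+3\Lambda_{1}\,\mathbb{E}\int_{t}^{\ell}\!\!\int_{\mathbb{R}^{m_{2}}_{0}}\big|\gamma(r,X(r-),U(r),z)-\widetilde{\gamma}(r,\widetilde{X}(r-),U(r),z)\big|^{2}\nu(dz)\,dr.
\end{align*}

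Next I would split each integrand by inserting the mixed term, e.g. $b(r,X(r),U(r))-\tilde b(r,\widetilde{X}(r),U(r))=[b(r,X(r),U(r))-b(r,\widetilde{X}(r),U(r))]+[b(r,\widetilde{X}(r),U(r))-\tilde b(r,\widetilde{X}(r),U(r))]$, use $|p+q|^{2}\le 2|p|^{2}+2|q|^{2}$, and bound the first bracket by the Lipschitz estimates of Assumption \ref{assump:SDECoefs} (with constant $C$ for $b$ and $\sigma$, and $C\rho(z)$ for $\gamma$) and the second by the relevant $L^{\infty}$-distance between the two sets of coefficients. For the jump term, integrating $C^{2}\rho^{2}(z)|\Delta(r-)|^{2}$ against $\nu(dz)$ produces $C^{2}M|\Delta(r-)|^{2}$ (recall $M=\int_{\mathbb{R}^{m_{2}}_{0}}\rho^{2}\,d\nu<\infty$), and since the c\`{a}dl\`{a}g path of $\Delta$ has only countably many jumps we may replace $\Delta(r-)$ by $\Delta(r)$ under the $dr$-integral. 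Using $\mathbb{E}|\Delta(r)|^{2}\le\phi(r)$ and collecting terms, one arrives at
\begin{align*}
\phi(\ell)\le A+B\int_{t}^{\ell}\phi(r)\,dr,\qquad \ell\in[t,T],
\end{align*}
where $A$ and $B$ depend only on $C$, $T$, and $M$, and $A$ is in fact a constant depending only on $C$, $T$, $M$ times the quantity on the right-hand side of the asserted estimate; note this quantity is finite, its $\nu$-integral being bounded by $4C^{2}M$ thanks to \eqref{eq:inftybounds} applied to both $\gamma$ and $\widetilde{\gamma}$.

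To conclude, Theorem \ref{thm:StrSolsSDEs} applied to both \eqref{eq:MainSDEs} and \eqref{eq:CompareSDEs}, together with the triangle inequality, shows $\phi(T)<\infty$, so Gronwall's inequality applies and gives $\phi(T)\le A\,e^{BT}$; setting $K_{3}$ equal to $e^{BT}$ times the constant multiplying the coefficient distances in $A$ completes the proof. The argument is entirely routine; the only point requiring a little care is keeping $\sup_{s\in[t,\ell]}$ inside the expectation throughout, which is precisely why the Burkholder--Davis--Gundy inequality (rather than the It\^o isometry) is applied to the two stochastic integrals before the Gronwall step, and why one first checks that $\phi$ is finite so that the Gronwall manipulation is legitimate. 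I do not anticipate any substantive obstacle.
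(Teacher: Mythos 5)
Your proposal is correct and follows essentially the same route as the paper: split the difference into drift, Brownian, and jump terms, apply Cauchy--Schwarz and Burkholder--Davis--Gundy to each, insert the mixed term to separate the Lipschitz contribution from the $L^{\infty}$-distance between the two coefficient sets, and close with Gronwall's inequality (the paper's factor $6 = 3\times 2$ records exactly the two successive convexity splits you perform). Your additional remarks about replacing $\Delta(r-)$ by $\Delta(r)$ under the $dr$-integral and about verifying $\phi(T)<\infty$ before invoking Gronwall are sound and fill in minor details the paper leaves implicit.
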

\begin{proof}
We denote $X(s)=X(s;t,x),\widetilde{X}(s)=\widetilde{X}(s;t,x)$.
By Cauchy Schwarz and Burkholder-Davis-Gundy inequalities, there exists a universal constant $\Lambda_{2}>0$, such that for any $\ell\in[t,T]$,
\begin{align*}
&\mathbb{E}\left(\sup_{s\in[t,\ell]}\left|X(s)-\widetilde{X}(s)\right|^{2}\right)\\
&\quad\leq 6T\,\mathbb{E}\left(\int_{t}^{\ell}\left|b\left(r,X(r),U(r)\right)-b\left(r,\widetilde{X}(r),U(r)\right)\right|^{2}dr\right)\\
&\qquad +6\Lambda_{2}\,\mathbb{E}\left(\int_{t}^{\ell}\left\|\sigma\left(r,X(r),U(r)\right)-\sigma\left(r,\widetilde{X}(r),U(r)\right)\right\|^{2}dr\right)\\
&\qquad +6\Lambda_{2}\,\mathbb{E}\left(\int_{t}^{\ell}\int_{\mathbb{R}^{m_{2}}_{0}}\left|\gamma\left(r,X(r-),U(r),z\right)-\gamma\left(r,\widetilde{X}(r-),U(r),z\right)\right|^{2}\nu(dz)dr\right)\\
&\qquad +6T^{2}\big\|b-\tilde{b}\big\|_{L^{\infty}(\overline{Q^{0}}\times\mathcal{U})}^{2}+6\Lambda_{2}T\left\|\sigma-\tilde{\sigma}\right\|_{L^{\infty}(\overline{Q^{0}}\times\mathcal{U})}^{2}\\
&\qquad +6\Lambda_{2}T\int_{\mathbb{R}^{m_{2}}_{0}}\left\|\gamma(\cdot,\cdot,\cdot,z)-\widetilde{\gamma}(\cdot,\cdot,\cdot,z)\right\|_{L^{\infty}(\overline{Q^{0}}\times\mathcal{U})}^{2}\nu(dz).
\end{align*}
Therefore, by Assumption \ref{assump:SDECoefs}-(iii),
\begin{align*}
\mathbb{E}\left(\sup_{s\in[t,\ell]}\left|X(s)-\widetilde{X}(s)\right|^{2}\right)&\leq 6\big(T+\Lambda_{2}\big)C^{2}(1+M)\int_{t}^{\ell}\mathbb{E}\left(\sup_{s\in[t,r]}\left|X(s)-\widetilde{X}(s)\right|^{2}\right)dr\\
&\quad +6T(T+\Lambda_{2})\left(\big\|b-\tilde{b}\big\|_{L^{\infty}(\overline{Q^{0}}\times\mathcal{U})}^{2}+\left\|\sigma-\tilde{\sigma}\right\|_{L^{\infty}(\overline{Q^{0}}\times\mathcal{U})}^{2}\right)\\
&\quad +6T(T+\Lambda_{2})\int_{\mathbb{R}^{m_{2}}_{0}}\left\|\gamma(\cdot,\cdot,\cdot,z)-\widetilde{\gamma}(\cdot,\cdot,\cdot,z)\right\|_{L^{\infty}(\overline{Q^{0}}\times\mathcal{U})}^{2}\nu(dz).
\end{align*}
The result follows from Gronwall's inequality with $K_{3}:=6T(T+\Lambda_{2})e^{6C^{2}(1+M)(T+\Lambda_{3})T}$.
\end{proof}

The next lemma provides an estimate for the cost functions.
\begin{lemma}\label{lem:CompCostFunt}
Let Assumptions \ref{assump:SDECoefs} and \ref{assump:GammaPsi} be satisfied. Let $\widetilde{\Gamma}$ be a uniformly continuous real-valued function on $\overline{Q^{0}}\times\mathcal{U}$, such that $D_{x}\widetilde{\Gamma}$ is a bounded continuous function on $\overline{Q^{0}}\times\mathcal{U}$. For any $(t,x)\in Q^{0}$, any generalized reference probability space $\mu=(\Omega,\mathscr{F},\mathscr{F}_{s}^{t},\mathbb{P},\mathcal{W},\mathcal{L})$ and any control process $U\in\mathcal{A}_{\mu}$, let $X(s):=X(s;t,x)$ (respectively, $\widetilde{X}(s):=\widetilde{X}(s;t,x)$) be the unique strong c\`{a}dl\`{a}g solution to \eqref{eq:MainSDEs} (respectively, \eqref{eq:CompareSDEs}). Then,
\begin{align*}
&\mathbb{E}\left(\int_{t}^{T}\left|\Gamma\left(r,X(r),U(r)\right)-\widetilde{\Gamma}\left(r,\widetilde{X}(r),U(r)\right)\right|dr\right)\\
&\quad\leq (T-t)\bigg[\big\|\Gamma-\widetilde{\Gamma}\big\|_{L^{\infty}(\overline{Q^{0}}\times\mathcal{U})}+K_{4}\big\|D_{x}\widetilde{\Gamma}\big\|_{L^{\infty}(\overline{Q^{0}}\times\mathcal{U})}\bigg(\big\|b-\tilde{b}\big\|_{L^{\infty}(\overline{Q^{0}}\times\mathcal{U})}+\left\|\sigma-\widetilde{\sigma}\right\|_{L^{\infty}(\overline{Q^{0}}\times\mathcal{U})}\\
&\qquad\qquad\quad\,\,\,\,+\bigg(\int_{\mathbb{R}^{m_{2}}_{0}}\left\|\gamma(\cdot,\cdot,\cdot,z)-\widetilde{\gamma}(\cdot,\cdot,\cdot,z)\right\|_{L^{\infty}(\overline{Q^{0}}\times\mathcal{U})}^{2}\nu(dz)\bigg)^{1/2}\bigg)\bigg],
\end{align*}
where $K_{4}=K_{4}(C,T,M)>0$ is a constant depending only on $C$, $T$ and $M$.
\end{lemma}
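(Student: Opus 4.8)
The plan is to insert the intermediate quantity $\widetilde{\Gamma}(r,X(r),U(r))$ and split the integrand by the triangle inequality into
\[
\left|\Gamma(r,X(r),U(r))-\widetilde{\Gamma}(r,X(r),U(r))\right|+\left|\widetilde{\Gamma}(r,X(r),U(r))-\widetilde{\Gamma}(r,\widetilde{X}(r),U(r))\right|.
\]
The first term is bounded, pointwise in $\omega$ and $r$, by $\|\Gamma-\widetilde{\Gamma}\|_{L^{\infty}(\overline{Q^{0}}\times\mathcal{U})}$, so after integrating over $r\in[t,T]$ and taking expectation it contributes exactly the term $(T-t)\|\Gamma-\widetilde{\Gamma}\|_{L^{\infty}(\overline{Q^{0}}\times\mathcal{U})}$ of the claimed bound.

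For the second term I would use that, since $D_{x}\widetilde{\Gamma}$ is bounded and continuous on $\overline{Q^{0}}\times\mathcal{U}$, the fundamental theorem of calculus along the segment joining $\widetilde{X}(r)$ to $X(r)$ shows that $\widetilde{\Gamma}(r,\cdot,u)$ is Lipschitz with constant $\|D_{x}\widetilde{\Gamma}\|_{L^{\infty}(\overline{Q^{0}}\times\mathcal{U})}$, uniformly in $(r,u)$; hence the second term is at most $\|D_{x}\widetilde{\Gamma}\|_{L^{\infty}(\overline{Q^{0}}\times\mathcal{U})}\,|X(r)-\widetilde{X}(r)|$. Bounding the integrand by its supremum in time and then applying the Cauchy--Schwarz (or Jensen) inequality yields
\[
\mathbb{E}\left(\int_{t}^{T}\left|X(r)-\widetilde{X}(r)\right|dr\right)\leq(T-t)\left(\mathbb{E}\left(\sup_{s\in[t,T]}\left|X(s)-\widetilde{X}(s)\right|^{2}\right)\right)^{1/2}.
\]
I would then apply Lemma~\ref{lem:CompStrSols} to bound the right-hand side by $(T-t)\sqrt{K_{3}}$ times $\big(\|b-\tilde{b}\|_{L^{\infty}(\overline{Q^{0}}\times\mathcal{U})}^{2}+\|\sigma-\widetilde{\sigma}\|_{L^{\infty}(\overline{Q^{0}}\times\mathcal{U})}^{2}+\int_{\mathbb{R}^{m_{2}}_{0}}\|\gamma(\cdot,\cdot,\cdot,z)-\widetilde{\gamma}(\cdot,\cdot,\cdot,z)\|_{L^{\infty}(\overline{Q^{0}}\times\mathcal{U})}^{2}\,\nu(dz)\big)^{1/2}$, and use the elementary inequality $\sqrt{a^{2}+b^{2}+c^{2}}\leq a+b+c$ for nonnegative reals to turn this into the three-term sum appearing inside the bracket of the statement. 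Collecting the two contributions and factoring out $(T-t)$ gives the asserted inequality with $K_{4}:=\sqrt{K_{3}}$, which depends only on $C$, $T$, and $M$.

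No genuine difficulty arises: the proof is a routine combination of the triangle inequality, the Lipschitz-in-$x$ property of $\widetilde{\Gamma}$ inherited from the boundedness of $D_{x}\widetilde{\Gamma}$, the Cauchy--Schwarz inequality, and the comparison estimate of Lemma~\ref{lem:CompStrSols}. The only points requiring a little care are keeping the jump contribution in its $L^{2}(\nu)$ form throughout, since that is the form in which Lemma~\ref{lem:CompStrSols} delivers it and the form appearing in the statement, and the bookkeeping of the single surviving factor $(T-t)$ so that it factors out cleanly of the final bound.
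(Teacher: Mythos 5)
Your proof is correct and follows essentially the same route as the paper: the paper records the same pointwise bound $\left|\Gamma(s,X(s),U(s))-\widetilde{\Gamma}(s,\widetilde{X}(s),U(s))\right|\leq\|\Gamma-\widetilde{\Gamma}\|_{L^{\infty}}+\|D_{x}\widetilde{\Gamma}\|_{L^{\infty}}\sup_{s}|X(s)-\widetilde{X}(s)|$ and then invokes Lemma~\ref{lem:CompStrSols} and Cauchy--Schwarz. You have merely made explicit the intermediate steps the paper leaves implicit, including the elementary inequality $\sqrt{a^{2}+b^{2}+c^{2}}\leq a+b+c$ used to split the comparison estimate into the three additive terms and the identification $K_{4}=\sqrt{K_{3}}$.
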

\begin{proof} Note that for any $s\in[t,T]$,
\begin{align*}
\left|\Gamma\!\left(s,X(s),U(s)\right)-\widetilde{\Gamma}\!\left(s,\widetilde{X}(s),U(s)\right)\right|\leq\big\|\Gamma-\widetilde{\Gamma}\big\|_{L^{\infty}(\overline{Q^{0}}\times\mathcal{U})}+\big\|D_{x}\widetilde{\Gamma}\big\|_{L^{\infty}( \overline{Q^{0}}\times\mathcal{U})}\!\sup_{s\in[t,T]}\left|X(s)-\widetilde{X}(s)\right|.
\end{align*}
So the result follows immediately from Lemma \ref{lem:CompStrSols} and Cauchy-Schwarz inequality.
\end{proof}

\section{Representation Formulas and Dynamic Programming Principle}\label{sec:DP}

This section is devoted to the proof of the stochastic representation formulas and the Dynamic Programming Principle. Recall that Assumptions \ref{assump:SDECoefs}, \ref{assump:GammaPsi}, \ref{assump:Domain O}, and \ref{assump:ellpiticity along boundary} hold throughout the paper. For any $(t,x)\in Q^{0}$, any generalized reference probability space $\mu=(\Omega,\mathscr{F},\mathscr{F}_{s}^{t},\mathbb{P},\mathcal{W},\mathcal{L})$, and any $U\in\mathcal{A}_{\mu}$, let $X(s;t,x)$ be the unique strong c\`{a}dl\`{a}g solution to \eqref{eq:MainSDEs}.

For any generalized reference probability space $\mu=(\Omega,\mathscr{F},\mathscr{F}_{s}^{t},\mathbb{P},\mathcal{W},\mathcal{L})$ and any $U\in\mathcal{A}_{\mu}$, we choose an $\mathscr{F}_{s}^{t}$-stopping time $\theta_{U}$, with $\theta_{U}\in[t,T]$ $\mathbb{P}$-a.$\,$s. Let $\widetilde{\mathcal{A}}_{\mu}$ be the collection of all such pairs $(U,\theta_{U})$. We also define $\widetilde{\mathcal{A}}_{t}:=\cup_{\mu}\widetilde{\mathcal{A}}_{\mu}$, where the union is taken over all generalized reference probability spaces $\mu$ on $[t,T]$.

\subsection{Smooth Value Function}\label{Subsec:DPSmoothSols}

We first establish the Dynamic Programming Principle when there exists a classical solution to \eqref{eq:HJB} with terminal-boundary condition \eqref{eq:TermBoundCondHJB}.
\begin{theorem}\label{thm:DPSmooth}
Let Assumptions \ref{assump:SDECoefs} and \ref{assump:GammaPsi} be satisfied. Let $W\in C^{1,2}(\overline{Q^{0}})$ be a solution to \eqref{eq:HJB} with terminal-boundary condition \eqref{eq:TermBoundCondHJB}. Then, we have
\begin{equation}\label{eq:WVVmu}
W(t,x)=V(t,x)=V_{\mu}(t,x),\quad (t,x)\in \overline{Q},
\end{equation}
for any generalized reference probability space $\mu$, and
\begin{align}\label{eq:DP}
W(t,x)=\inf_{(U,\theta_{U})\in\widetilde{\mathcal{A}}_{\mu}}\mathbb{E}\left(\int_{t}^{\theta_{U}\wedge\tau}\Gamma\left(s,X(s;t,x),U(s)\right)ds+W\left(\theta_{U}\wedge\tau,X(\theta_{U}\wedge\tau;t,x)\right)\right).
\end{align}
\end{theorem}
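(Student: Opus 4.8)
The plan is to prove the representation formula \eqref{eq:DP} first, since \eqref{eq:WVVmu} will follow from it by specializing the stopping time. The fundamental tool is Dynkin's formula (It\^o's formula for jump diffusions) applied to the process $s\mapsto W(s,X(s;t,x))$ along the controlled dynamics \eqref{eq:MainSDEs}. Because $W\in C^{1,2}(\overline{Q^0})$ and the coefficients are bounded by Assumption \ref{assump:SDECoefs}, all the stochastic integrals arising are true martingales after stopping at $\theta_U\wedge\tau$ (here we use \eqref{eq:Est2ndMoment}, Lemma \ref{lem:2MomentEst}, the bounds \eqref{eq:inftybounds}, and $M<\infty$ to control the compensated Poisson integral term $\int (W(s,X(s-)+\gamma(\dots,z))-W(s,X(s-)))\widetilde N(ds,dz)$ as well as the Brownian term). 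Thus for any fixed admissible pair $(U,\theta_U)\in\widetilde{\mathcal A}_\mu$,
\begin{align*}
\mathbb{E}\,W(\theta_U\wedge\tau,X(\theta_U\wedge\tau))=W(t,x)+\mathbb{E}\int_t^{\theta_U\wedge\tau}\mathscr{A}^{U(s)}W(s,X(s))\,ds.
\end{align*}
Since $W$ solves \eqref{eq:HJB}, we have $\mathscr{A}^{u}W(s,y)\ge -\Gamma(s,y,u)$ for every $u\in\mathcal U$ and every $(s,y)\in Q$, so $\mathscr{A}^{U(s)}W(s,X(s))\ge -\Gamma(s,X(s),U(s))$ on $\{s<\tau\}$; rearranging gives
\begin{align*}
W(t,x)\le \mathbb{E}\Bigl(\int_t^{\theta_U\wedge\tau}\Gamma(s,X(s),U(s))\,ds+W(\theta_U\wedge\tau,X(\theta_U\wedge\tau))\Bigr),
\end{align*}
which is the ``$\le$'' half of \eqref{eq:DP}, valid for every $(U,\theta_U)$, hence for the infimum.

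For the reverse inequality I would establish that the infimum is attained up to $\varepsilon$ by choosing an approximately optimal control. The standard route: since $W\in C^{1,2}$ and $\mathcal U$ is merely Polish, one cannot in general select a measurable minimizer $u^*(s,y)$ of $u\mapsto \mathscr{A}^uW(s,y)+\Gamma(s,y,u)$ giving exactly $0$; instead, fix $\varepsilon>0$ and use a measurable $\varepsilon/(T)$-approximate selector $u^\varepsilon(s,y)$ (existence via a measurable selection theorem applied to the multifunction $\{u:\mathscr{A}^uW(s,y)+\Gamma(s,y,u)<\varepsilon/T\}$, using continuity of the coefficients and of $\Gamma$ in $u$, together with Assumption \ref{assump:SDECoefs}(ii),(iii)). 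Then consider the closed-loop SDE \eqref{eq:MainSDEs} with $U(s)=u^\varepsilon(s,X(s))$. One must check this SDE has a solution: here a subtlety is that $u^\varepsilon$ need not be continuous in $(s,y)$, so strong existence in the sense of Theorem \ref{thm:StrSolsSDEs} may fail; the clean fix is to work on a possibly enlarged reference probability space and obtain a \emph{weak} solution (or to approximate $u^\varepsilon$ by controls constant on a fine time-mesh evaluated at $X$ at mesh points, which are genuinely predictable and for which \eqref{eq:MainSDEs} is solvable), noting that $\mathcal A_t$ ranges over all generalized reference probability spaces so this is allowed. With such a control, Dynkin's formula again gives
\begin{align*}
W(t,x)\ge \mathbb{E}\Bigl(\int_t^{\theta_U\wedge\tau}\Gamma(s,X(s),u^\varepsilon(s,X(s)))\,ds+W(\theta_U\wedge\tau,X(\theta_U\wedge\tau))\Bigr)-\varepsilon(T-t),
\end{align*}
and letting $\varepsilon\downarrow 0$ yields the ``$\ge$'' half, completing \eqref{eq:DP}.

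Finally, \eqref{eq:WVVmu} follows by taking $\theta_U\equiv T$ in \eqref{eq:DP}: then $\theta_U\wedge\tau=\tau$, and by the terminal-boundary condition \eqref{eq:TermBoundCondHJB} we have $W(\tau,X(\tau))=\Psi(\tau,X(\tau))$ since $(\tau,X(\tau))\in\partial_{\mathrm{np}}Q$; hence the right-hand side of \eqref{eq:DP} becomes exactly $J_\mu(t,x;U)$ and \eqref{eq:DP} reads $W(t,x)=\inf_{U\in\mathcal A_\mu}J_\mu(t,x;U)=V_\mu(t,x)$. Since the right-hand side does not depend on the particular $\mu$ beyond what the infimum already captures, and since enlarging $\mu$ can only decrease the infimum while the bound $W(t,x)\le J_\mu(t,x;U)$ holds on every $\mu$, taking the infimum over all $\mu$ gives $W(t,x)=V(t,x)$ as well, for all $(t,x)\in\overline Q$ (the boundary values being immediate from \eqref{eq:TermBoundCondHJB} and $\tau=t$ there). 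The main obstacle I anticipate is the reverse inequality: making the heuristic ``plug in the $\varepsilon$-optimal feedback control'' rigorous requires care about measurable selection and about solvability of the resulting closed-loop SDE, which is precisely why the framework admits general reference probability spaces and why the introduction emphasizes constructing $\varepsilon$-optimal controls; the martingale (true vs.\ local) property of the jump integral, while routine, also needs the integrability bounds to be invoked explicitly.
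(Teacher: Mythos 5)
Your ``$\leq$'' direction is exactly the paper's argument, and your parenthetical fallback for the ``$\geq$'' direction is the right idea, so the proposal is broadly aligned. However, there are two issues worth flagging.

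First, your primary route for ``$\geq$'' --- a measurable $\varepsilon$-optimal feedback $u^\varepsilon(s,y)$ and a \emph{weak} solution of the closed-loop SDE $dX = b(\cdot,X,u^\varepsilon(\cdot,X))\,dt + \sigma(\cdot,X,u^\varepsilon(\cdot,X))\,d\mathcal{W} + \dots$ --- is not just delicate, it is genuinely flawed in this setting. The HJB equation \eqref{eq:HJB} is degenerate (Assumption \ref{assump:SDECoefs} imposes no lower bound on $\sigma\sigma^T$ in the interior of $O$), and for a degenerate SDE whose coefficients are merely Borel in $x$ (because $u^\varepsilon$ need not be continuous), even weak existence can fail. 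Enlarging the reference probability space does not cure this. Moreover, the theorem asserts $V_\mu(t,x)=W(t,x)$ for \emph{every} generalized reference probability space $\mu$, not only $V(t,x)=W(t,x)$, so the approximately optimal control must actually be constructed as an element of $\mathcal{A}_\mu$ on the given $\mu$; a weak solution on a possibly different space would only give the weaker claim about $V$.

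Second, the fallback you mention in parentheses is essentially what the paper does, but two points are missing. (i) The paper needs no measurable selection theorem at all: it partitions both time (into $M_1$ intervals $(t_j,t_{j+1}]$) and space (into $M_2$ Borel cells $O_k$ of diameter $<\delta/2$ with representative points $x_k\in O_k$), and picks a finite family $u_{jk}\in\mathcal{U}$ directly from the HJB equation $\inf_u(\mathscr{A}^u W(t_j,x_k)+\Gamma(t_j,x_k,u))=0$, so that $\mathscr{A}^{u_{jk}}W(t_j,x_k)+\Gamma(t_j,x_k,u_{jk})<\kappa/2$; the resulting control $U(s)=u_{jk}$ on $\{s\in(t_j,t_{j+1}],\,X(t_j)\in O_k\}$ is a predictable step control on $\mu$ and \eqref{eq:MainSDEs} is solvable with it by Theorem \ref{thm:StrSolsSDEs}. (ii) The quantitative step showing this control is nearly optimal is precisely what needs to be supplied: one uses the uniform continuity of $(s,y)\mapsto \mathscr{A}^u W(s,y)+\Gamma(s,y,u)$, \emph{uniformly} in $u$ (available since $W\in C^{1,2}(\overline{Q^0})$ and $b,\sigma,\gamma,\Gamma$ obey Assumptions \ref{assump:SDECoefs}--\ref{assump:GammaPsi}), to keep the running error $<\kappa$ as long as $(s,X(s))$ stays within $\delta$ of $(t_j,X(t_j))$, and one uses the increment estimate of Lemma \ref{lem:2MomentEst} together with Chebyshev to bound $\mathbb{P}(\sup_{s\in[t_j,t_{j+1}]}|X(s\wedge\tau\wedge\theta_U)-X(t_j\wedge\tau\wedge\theta_U)|\ge\delta/2)\lesssim (t_{j+1}-t_j)/\delta^2$, so the contribution from the ``bad'' cells is $O((T-t)^2/(\delta^2 M_1))\to 0$. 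Without this error analysis, the ``$\geq$'' inequality is not established. Your closing derivation of \eqref{eq:WVVmu} from \eqref{eq:DP} by taking $\theta_U\equiv T$ and using $W=\Psi$ on $\partial_{\mathrm{np}}Q$ is fine.
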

\begin{proof}
Since $W\in C^{1,2}(\overline{Q^{0}})$, by It\^{o}'s formula and \eqref{eq:GenX}, for any generalized reference probability space $\mu$, and any $(U,\theta_{U})\in\widetilde{\mathcal{A}}_{\mu}$,
\begin{align}\label{eq:ItoW}
W(t,x)&=W\!\left(\theta_{U}\wedge\tau,X(\theta_{U}\wedge\tau)\right)-\int_{t}^{\theta_{U}\wedge\tau}\mathscr{A}^{U(s)}W\!\left(s,X(s)\right)ds\nonumber\\
&\quad -\int_{t}^{\theta_{U}\wedge\tau}D_{x}W(s,X(s))\cdot\sigma(s,X(s),U(s))\,d\mathcal{W}(s)\nonumber\\
&\quad -\int_{t}^{\theta_{U}\wedge\tau}\int_{\mathbb{R}_{0}^{m_{2}}}\left(W\left(s,X(s-)+\gamma(s,X(s-),U(s),z)\right)-W\left(s,X(s-)\right)\right)\widetilde{N}(ds,dz).
\end{align}
Taking the expectation of both sides of \eqref{eq:ItoW} gives
\begin{align*}
W(t,x)=\mathbb{E}\left(W\left(\theta_{U}\wedge\tau,X(\theta_{U}\wedge\tau)\right)-\int_{t}^{\theta_{U}\wedge\tau}\mathscr{A}^{U(s)}W\left(s,X(s)\right)ds\right).
\end{align*}
This, together with \eqref{eq:HJB}, yields
\begin{align*}
W(t,x)\leq\mathbb{E}\left(W\left(\theta_{U}\wedge\tau,X(\theta_{U}\wedge\tau)\right)+\int_{t}^{\theta_{U}\wedge\tau}\Gamma\left(s,X(s),U(s)\right)ds\right).
\end{align*}

To prove the reverse inequality, let us fix any $\kappa>0$. Since $W\in C^{1,2}(\overline{Q^{0}})$ and $\Gamma(\cdot,\cdot,u)$ is uniformly continuous on $\overline{Q^{0}}$, uniformly for $u\in\mathcal{U}$, there exists $\delta>0$ such that, for any $t_{1},t_{2}\in[t,T]$ and any $x_{1},x_{2}\in\overline{O}$, with $|t_{1}-t_{2}|<\delta$ and $|x_{1}-x_{2}|<\delta$, we have
\begin{align}\label{eq:UniContAWGamma}
\left|\mathscr{A}^{u}W(t_{1},x_{1})+\Gamma(t_{1},x_{1},u)-\mathscr{A}^{u}W(t_{2},x_{2})-\Gamma(t_{2},x_{2},u)\right|<\frac{\kappa}{2},\quad\text{for any }u\in\mathcal{U}.
\end{align}
Let $M_{1}\in\mathbb{N}$ be sufficiently large so that $(T-t)/M_{1}<\delta$. We partition $[t,T]$ into $M_{1}$ subintervals
$[t_0,t_1], t_0=t,$ and $(t_{j},t_{j+1}]$, $j=1,\ldots,M_{1}-1$, of length $(T-t)/M_{1}$. Moreover, let $\overline{O}=O_{1}\cup\cdots\cup O_{M_{2}}$, where $O_{1},\ldots,O_{M_{2}}$ are disjoint Borel sets of diameter less than $\delta/2$ and $M_{2}\in\mathbb{N}$. For each $k=1,\ldots,M_{2}$, we fix $x_{k}\in O_{k}$ arbitrarily. For each $j=0,\ldots,M_{1}-1$ and $k=1,\ldots,M_{2}$, since $W$ satisfies $(\ref{eq:HJB})$, there exists $u_{jk}\in \mathcal{U}$ such that
\begin{align*}
\mathscr{A}^{u_{jk}}W(t_{j},x_{k})+\Gamma(t_{j},x_{k},u_{jk})<\frac{\kappa}{2}.
\end{align*}
Together with \eqref{eq:UniContAWGamma}, for any $(s,y)\in [t_{j},t_{j+1}]\times (B_{\delta}(x_k)\cap\overline{O})$,
\begin{align}\label{eq:UpperBoundAujkWGamma}
\mathscr{A}^{u_{jk}}W(s,y)+\Gamma(s,y,u_{jk})<\kappa.
\end{align}

Define $\bar{u}=(\bar{u}_{0},\cdots,\bar{u}_{M_{1}-1}):\mathbb{R}^{d}\rightarrow\mathcal{U}^{M_{1}}$ by
\begin{align*}
\bar{u}_{j}(y):=\left\{\begin{array}{ll} u_{jk},\quad &\text{if }\,y\in O_{k}, \\ u_{0},\quad &\text{if }\,y\in\overline{O}^{c},\end{array}\right.,\quad j=0,\ldots,M_{1}-1,\,\,\,\,k=1,\ldots,M_{2},
\end{align*}
where $u_{0}\in\mathcal{U}$ is arbitrarily fixed. We define a Markov control policy and the corresponding solution to \eqref{eq:MainSDEs}, as follows. For $s\in [t,t_{1}]$, let $X(s;t,x)$ be the unique solution to \eqref{eq:MainSDEs} with control $U(s)\equiv\bar{u}_{0}(x)$. By induction, for $j=1,\ldots,M_{1}-1$, assume that $X(\,\cdot\,;t,x)$ and $U(\,\cdot\,)$ have been constructed on $[t,t_{j}]$. For $s\in(t_{j},t_{j+1}]$, let $U(s)=\bar{u}_{j}(X(t_{j};t,x))$, and let $X(s;t,x)$ be the associated solution to
\begin{align*}
X(s,t,x)&=X(t_{j};t,x)+\int_{t_{j}}^{s}b\left(r,X(r;t,x),U(r)\right)dr+\int_{t_{j}}^{s}\sigma\left(r,X(r;t,x),U(r)\right)d\mathcal{W}(r)\\
&\quad +\int_{t_{j}}^{s}\int_{\mathbb{R}_{0}^{m_{2}}}\gamma\left(r,X(r-;t,x),U(s),z\right)\widetilde{N}(dr,dz).
\end{align*}
Thus, $U(s)\equiv\bar{u}_{0}(x)$ for $s\in[t,t_{1}]$ and, for $j=1,\ldots,M_{1}-1$,
\begin{align}\label{eq:MarkovControl}
U(s)=u_{jk}\quad\text{if }\,s\in (t_{j},t_{j+1}]\,\,\,\,\text{and}\,\,\,\, X(t_{j};t,x)\in O_{k},\quad k=1,\ldots,M_{2}.
\end{align}
By \eqref{eq:ItoW}, we have
\begin{align}
W(t,x)&=\mathbb{E}\left(\int_{t}^{\theta_{U}\wedge\tau}\Gamma\left(s,X(s),U(s)\right)ds+W\left(\theta_{U}\wedge\tau,X(\theta_{U}\wedge\tau)\right)\right)\nonumber\\
\label{eq:ItoW2} &\quad -\mathbb{E}\left(\int_{t}^{\theta_{U}\wedge\tau}\left(\mathscr{A}^{U(s)}W(s,X(s))+\Gamma\left(s,X(s),U(s)\right)\right)ds\right).
\end{align}
For $j=0,\ldots,M_{1}-1$, let
\begin{align*}
\Omega_{j}:=\left\{\omega\in\Omega:\,\left|X(s\wedge\tau\wedge\theta_{U})(\omega)-X(t_{j}\wedge\tau\wedge\theta_{U})(\omega)\right|<\frac{\delta}{2},\,\,\,\,\text{for all }\,s\in[t_{j},t_{j+1}]\right\}.
\end{align*}
By \eqref{eq:UpperBoundAujkWGamma}, \eqref{eq:MarkovControl}, and the fact that $|X(t_{j})-x_{k}|<\delta/2$ if $ X(t_{j})\in O_{k}$, for any $s\in (t_{j},t_{j+1})$, $s\leq\tau\wedge\theta_{U}$,
\begin{align}\label{eq:UpperBoundAUWGammaMC}
\mathscr{A}^{U(s)}W\left(s,X(s)\right)+\Gamma\left(s,X(s),U(s)\right)<\kappa,\quad\text{in }\,\Omega_{j}.
\end{align}
By Lemma \ref{lem:2MomentEst}, for some constant $K_{5}>0$ depending only on $C$, $T$ and $M$,
\begin{align*}
\mathbb{P}(\Omega_{j}^{c})=\mathbb{P}\left(\sup_{s\in [t_{j},t_{j+1}]}\left|X(s\wedge\tau\wedge\theta_{U})-X(t_{j}\wedge\tau\wedge\theta_{U})\right|\geq\frac{\delta}{2}\right)\leq\frac{4K_{5}}{\delta^{2}}\left(t_{j+1}-t_{j}\right).
\end{align*}
Hence, \eqref{eq:UpperBoundAUWGammaMC} leads to
\begin{align*}
&\mathbb{E}\left(\int_{t}^{\theta_{U}\wedge\tau}\left(\mathscr{A}^{U(s)}W(s,X(s))+\Gamma\left(s,X(s),U(s)\right)\right)ds\right)\\
&\quad =\sum_{j=0}^{M_{1}-1}\mathbb{E}\left(\int_{t_{j}\wedge\tau}^{\theta_{U}\wedge t_{j+1}\wedge\tau}\left(\mathscr{A}^{U(s)}W(s,X(s))+\Gamma\left(s,X(s),U(s)\right)\right)ds\right)\\
&\quad\leq\kappa(T-t)+\sum_{j=0}^{M_{1}-1}\left\|\mathscr{A}^{u}W+\Gamma\right\|_{L^{\infty}([t,T]\times\overline{O}\times\mathcal{U})}(t_{j+1}-t_{j})\,\mathbb{P}(\Omega_{j}^{c})
\end{align*}
\begin{align*}
&\quad\leq\kappa(T-t)+\left\|\mathscr{A}^{u}W+\Gamma\right\|_{L^{\infty}([t,T]\times\overline{O}\times\mathcal{U})}\frac{4 K_{5}(T-t)^2}{\delta^{2}M_{1}}.
\end{align*}
Since $\kappa$ and $M_{1}$ are arbitrary, this, together with \eqref{eq:ItoW2}, gives us
\begin{align*}
W(t,x)\geq\inf_{(U,\theta_{U})\in\widetilde{\mathcal{A}}_{\mu}}\mathbb{E}\left(\int_{t}^{\theta_{U}\wedge\tau}\Gamma\left(s,X(s),U(s)\right)ds+W\left(\theta_{U}\wedge\tau,X(\theta_{U}\wedge\tau)\right)\right),
\end{align*}
which completes the proof of \eqref{eq:DP}. Finally, by choosing the constant stopping times $\theta_{U}\equiv T$ in \eqref{eq:DP} and noting that \eqref{eq:DP} is independent of the choice of a generalized reference probability space, we obtain \eqref{eq:WVVmu}.
\end{proof}

We point out that the construction of almost optimal controls in the proof of Theorem \ref{thm:DPSmooth} applied to the
case of Section \ref{Subsec:DPVisSolFinCtrl}, together with the uniform convergence of the value functions for the approximating control problems, provides a recipe how to construct $\varepsilon$-optimal controls for the original stochastic optimal control problem associated with our equation.

\subsection{Finite Control Sets}\label{Subsec:DPVisSolFinCtrl}

In this subsection, we assume that $\mathcal{U}$ is a finite set, which will be relaxed later by an approximation argument. For any $\delta\in(0,\eta/2)$, recall that $O_{\delta}=\{y\in\mathbb{R}^{d}:\,\text{dist}(y,O)<\delta\}$. We define $Q_{\delta}:=[0,T)\times O_{\delta}$.
In the sequel, $o_{\xi}(1)$ denotes any function of $\xi\in\mathbb{R}$ which converges to 0 as $\xi\rightarrow 0$.

Using Assumptions \ref{assump:SDECoefs}, \ref{assump:GammaPsi}, \ref{assump:Domain O}, \ref{assump:ellpiticity along boundary}, and Theorem \ref{thm:proxdomain}, we can construct sequences of functions $b_{n}:\overline{Q^{0}}\times\mathcal{U}\rightarrow\mathbb{R}^{d}$, $\sigma_{n}:\overline{Q^{0}}\times\mathcal{U}\rightarrow\mathbb{R}^{d\times m_{1}}$, $\gamma_{n}:\overline{Q^{0}}\times\mathcal{U}\times\mathbb{R}^{m_{2}}\rightarrow\mathbb{R}^{d}$, and $\Gamma_{n}:\overline{Q^{0}}\times\mathcal{U}\rightarrow\mathbb{R}$, $n\in\mathbb{N}$, satisfying the following assumptions.
\begin{assumption}\label{assump:CondsApproxFunts}
\begin{itemize}
\item [(i)] There exists a universal constant $\widetilde{C}>0$, and for any $n\in\mathbb{N}$, there exists a constant $\widetilde{C}_{n}>0$, depending only on $n$, such that for any $(t_{1},x_{1}),(t_{2},x_{2})\in \overline{Q^{0}}$, $z\in\mathbb{R}^{m_{2}}$, and $u\in\mathcal{U}$,
    \begin{align*}
    &\left|b_{n}(t_{1},x_{1},u)\!-\!b_{n}(t_{2},x_{2},u)\right|+\left\|\sigma_{n}(t_{1},x_{1},u)\!-\!\sigma_{n}(t_{2},x_{2},u)\right\|+\left|\Gamma_{n}(t_{1},x_{1},u)\!-\!\Gamma_{n}(t_{2},x_{2},u)\right|\\
    &\quad\leq\widetilde{C}_{n}\left(\left|t_{1}-t_{2}\right|+\left|x_{1}-x_{2}\right|\right),\\
    &\qquad\qquad\left|\gamma_{n}(t_{1},x_{1},u,z)-\gamma_{n}(t_{2},x_{2},u,z)\right|\leq\widetilde{C}_{n}\,\rho(z)\left(\left|t_{1}-t_{2}\right|+\left|x_{1}-x_{2}\right|\right),\\
    &\left\|b_{n}\right\|_{L^{\infty}(\overline{Q^{0}}\times\mathcal{U})}+\left\|\sigma_{n}\right\|_{L^{\infty}(\overline{Q^{0}}\times\mathcal{U})}+\left\|\Gamma_{n}\right\|_{L^{\infty}(\overline{Q^{0}}\times\mathcal{U})}\leq\widetilde{C},\quad\left\|\gamma_{n}(\cdot,\cdot,\cdot,z)\right\|_{L^{\infty}(\overline{Q^{0}}\times\mathcal{U})}\leq\widetilde{C}\rho(z).
    \end{align*}
\item [(ii)] As $n\rightarrow\infty$,
    \begin{align*}
    \max\left(\left\|b-b_{n}\right\|_{L^{\infty}(\overline{Q^{0}}\times\mathcal{U})},\,\left\|\sigma-\sigma_{n}\right\|_{L^{\infty}(\overline{Q^{0}}\times\mathcal{U})},\,\|\Gamma-\Gamma_{n}\|_{L^{\infty}( \overline{Q^{0}}\times\mathcal{U})}\right)=o_{1/n}(1),\quad\\
    \int_{\mathbb{R}^{m_{2}}_{0}}\left\|\gamma(\cdot,\cdot,\cdot,z)-\gamma_{n}(\cdot,\cdot,\cdot,z)\right\|_{L^\infty(\overline{Q^{0}}\times\mathcal{U})}^{2}\nu(dz)=o_{1/n}(1),\qquad\qquad\quad\\
    \left\|D_{x}\Gamma_{n}\right\|_{L^{\infty}(\overline{Q^{0}}\times\mathcal{U})}\cdot\max\left(\left\|b-b_{n}\right\|_{L^{\infty}( \overline{Q^{0}}\times\mathcal{U})},\,\left\|\sigma-\sigma_{n}\right\|_{L^{\infty}( \overline{Q^{0}}\times\mathcal{U})}\right)=o_{1/n}(1),\quad\\
    \left\|D_{x}\Gamma_{n}\right\|_{L^{\infty}(\overline{Q^{0}}\times\mathcal{U})}\left(\int_{\mathbb{R}^{m_{2}}_{0}}\left\|\gamma(\cdot,\cdot,\cdot,z)-\gamma_{n}(\cdot,\cdot,\cdot,z)\right\|_{L^\infty(\overline{Q^{0}}\times\mathcal{U})}^{2}\nu(dz)\right)^{1/2}=o_{1/n}(1).
    \end{align*}
\item [(iii)] For any sufficiently small $\delta>0$ and any $x\in\partial O_{\delta}$, there exists a unit vector $n_{x,\delta}\in N(O_{\delta},x)$, such that $\overline{B}_{\eta/2}(x+\eta n_{x,\delta}/2)\cap\overline{O}_{\delta}=\{x\}$. Moreover, for any $t\in[0,T]$, $u\in\mathcal{U}$, and any $n\in\mathbb{N}$ large enough,
    \begin{align*}
    n_{x,\delta}\,\sigma_{n}(t,x,u)\sigma_{n}^{T}(t,x,u)\,n_{x,\delta}^{T}\geq\frac{\lambda}{2}.
    \end{align*}
\end{itemize}
\end{assumption}

Next, for each arbitrarily fixed $t\in[0,T)$, we consider an {\it extended generalized reference probability space} $\mu_{1}=(\Omega,\mathscr{F},\mathscr{F}_{s}^{t},\mathbb{P},\mathcal{W},\widetilde{\mathcal{W}},\mathcal{L})$, where the probability space is large enough to accommodate another standard $d$-dimensional $\mathscr{F}_{s}^{t}$-Brownian motion $\widetilde{\mathcal{W}}$, which is independent of $\mathcal{W}$ and $\mathcal{L}$. Let $\mathcal{A}_{\mu_{1}}$ be the collection of all $\mathscr{F}_{s}^{t}$-predictable $\mathcal{U}$-valued processes on $\mu_{1}$, and let $\mathcal{A}_{t}^{e}:=\cup_{\mu_1}\mathcal{A}_{\mu_{1}}$, where the union is taken over all extended generalized reference probability spaces $\mu_{1}$.
\begin{remark}\label{rem:EquiProbSpaces}
If $\mu_{1}=(\Omega,\mathscr{F},\mathscr{F}_{s}^{t},\mathbb{P},\mathcal{W},\widetilde{\mathcal{W}},\mathcal{L})$ is an extended generalized reference probability space, then $\mu=(\Omega,\mathscr{F},\mathscr{F}_{s}^{t},\mathbb{P},\mathcal{W},\mathcal{L})$ is a generalized reference probability space, and clearly we have $\mathcal{A}_{\mu}=\mathcal{A}_{\mu_1}$. On the other hand, given a generalized reference probability space $\mu=(\Omega,\mathscr{F},\mathscr{F}_{s}^{t},\mathbb{P},\mathcal{W},\mathcal{L})$, consider a standard $d$-dimensional
$\widetilde{\mathscr{F}_{s}^{t}}$-Brownian motion $\widetilde{\mathcal{W}}$ defined on a filtered probability space
$(\widetilde{\Omega},\widetilde{\mathscr{F}},\widetilde{\mathscr{F}_{s}^{t}},\widetilde{\mathbb{P}})$. For $(\omega,\widetilde{\omega})\in\Omega_{1}:=\Omega\times\widetilde{\Omega}$, let
\begin{align*}
\mathcal{W}_{1}(s)(\omega,\widetilde{\omega})=\mathcal{W}(s)(\omega),\quad\mathcal{W}_{2}(s)(\omega,\widetilde{\omega})=\widetilde{\mathcal{W}}(s)(\widetilde{\omega}),\quad\mathcal{L}_{1}(s)(\omega,\widetilde{\omega})=\mathcal{L}(s)(\omega).
\end{align*}
Then
\begin{align*}
\mu_{1}:=\left(\Omega_{1},\overline{\mathscr{F}\otimes\widetilde{\mathscr{F}}},\mathscr{F}_{1,s}^{t},\mathbb{P}\otimes\widetilde{\mathbb{P}},\mathcal{W}_{1},\mathcal{W}_{2},\mathcal{L}_{1}\right),
\end{align*}
where $\overline{\mathscr{F}\otimes\widetilde{\mathscr{F}}}$ is the augmentation of the $\sigma$-field $\mathscr{F}\otimes\widetilde{\mathscr{F}}$ by the $\mathbb{P}\otimes\widetilde{\mathbb{P}}$ null sets, and $\mathscr{F}_{1,s}^{t}:=\overline{\cap_{r>s}\mathscr{F}_{s}^{t}\otimes\widetilde{\mathscr{F}_{s}^{t}}}$, is an extended generalized reference probability space, and any element $U\in\mathcal{A}_{\mu}$ can be regarded as an element in $\mathcal{A}_{\mu_{1}}$. Thus we have $\mathcal{A}_{t}^{e}=\mathcal{A}_{t}$.
\end{remark}

Let $\{\epsilon_{n}\}_{n\in\mathbb{N}}$ be a positive sequence of real numbers such that $\epsilon_{n}\rightarrow 0$, as $n\rightarrow\infty$. For any extended generalized reference probability space $\mu_{1}=(\Omega,\mathscr{F},\mathscr{F}_{s}^{t},\mathbb{P},\mathcal{W},\widetilde{\mathcal{W}},\mathcal{L})$, any $U\in\mathcal{A}_{\mu_{1}}$, any $x\in\mathbb{R}^{d}$, and any $n\in\mathbb{N}$, consider an $\mathbb{R}^{d}$-valued stochastic process $X_{n}(s;t,x)$ which is the solution to the following controlled SDE:
\begin{align*}
X_{n}(s;t,x)&=x+\int_{t}^{s}b_{n}\left(r,X_{n}(r;t,x),U(r)\right)dr+\int_{t}^{s}\sigma_{n}\left(r,X_{n}(r;t,x),U(r)\right)d\mathcal{W}(r)\\
&\quad +\int_{t}^{s}\sqrt{\epsilon_{n}}\,d\widetilde{\mathcal{W}}(r)+\int_{t}^{s}\!\int_{\mathbb{R}^{m_{2}}_{0}}\gamma_{n}\left(r,X_{n}(r-;t,x),U(r),z\right)\widetilde{N}(dr,dz),\quad s\in[t,T].
\end{align*}
A similar argument as in Theorem \ref{thm:StrSolsSDEs} ensures that the above SDE has a unique strong solution $X_{n}(s;t,x)$ with $\mathbb{P}-$a.$\,$s. c\`{a}dl\`{a}g sample paths. For any $\delta\in(0,\eta/2)$ and $(t,x)\in Q^{0}$, let
\begin{align*}
\tau_{\delta}=\tau_{\delta}(t,x)&:=\inf\left\{s\in[t,T]:\,X(s;t,x)\not\in O_{\delta/2}\right\},\\
\tau_{\delta,n}=\tau_{\delta,n}(t,x)&:=\inf\left\{s\in[t,T]:\,X_{n}(s;t,x)\not\in O_{\delta/2}\right\},
\end{align*}
with the convention $\inf\emptyset=T$.
\begin{lemma}\label{lem:ProbSndelta}
Let Assumption \ref{assump:SDECoefs} be satisfied. Let $\{b_{n}\}_{n\in\mathbb{N}}$, $\{\sigma_{n}\}_{n\in\mathbb{N}}$, and $\{\gamma_{n}\}_{n\in\mathbb{N}}$ be the sequences satisfying Assumption \ref{assump:CondsApproxFunts}-(i) {\rm $\&$} (ii). For any $x\in O$, let
\begin{align*}
\mathcal{S}_{\delta,n}=\mathcal{S}_{\delta,n}(t,x):=\left\{\omega\in\Omega:\sup_{\ell\in[t,T]}\left|X\!\left(\ell\!\wedge\!(\tau_{\delta}\!\vee\!\tau_{\delta,n});t,x\right)\!(\omega)-X_{n}\!\left(\ell\!\wedge\!(\tau_{\delta}\!\vee\!\tau_{\delta,n});t,x\right)\!(\omega)\right|>\frac{\delta}{2}\right\}.
\end{align*}
Then, we have
\begin{align}\label{eq:ProbSdeltan}
\mathbb{P}(\mathcal{S}_{\delta,n})\rightarrow 0,\quad\text{as }\,n\rightarrow\infty.
\end{align}
Moreover, for any $\omega\in\mathcal{S}_{\delta,n}^{c}$,
\begin{align}\label{eq:tautaudeltanSdeltan}
\tau(w)\wedge\tau_{\delta,n}(w)=\tau(w).
\end{align}
\end{lemma}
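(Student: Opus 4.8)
\textbf{Proof plan for Lemma \ref{lem:ProbSndelta}.}
The overall strategy is to combine the $L^2$-stability estimate of Lemma \ref{lem:CompStrSols} with a stopping-time argument that accounts for the fact that the two processes $X$ and $X_n$ are driven by the \emph{same} noise but by \emph{different} coefficients, and live in slightly different stopped regimes. First I would note that up to the stopping time $\tau_\delta\vee\tau_{\delta,n}$ both $X(\cdot\wedge(\tau_\delta\vee\tau_{\delta,n}))$ and $X_n(\cdot\wedge(\tau_\delta\vee\tau_{\delta,n}))$ solve SDEs of the type \eqref{eq:MainSDEs}, \eqref{eq:CompareSDEs} with coefficients $(b,\sigma,\gamma)$ and $(b_n,\sigma_n,\gamma_n)$ respectively (the extra term $\sqrt{\epsilon_n}\,d\widetilde{\mathcal W}$ in the equation for $X_n$ is harmless: one can absorb it into the comparison, e.g.\ by viewing it as a perturbation of the diffusion coefficient of size $\sqrt{\epsilon_n}\to 0$, or simply re-run the Gronwall argument of Lemma \ref{lem:CompStrSols} with this additional $O(\epsilon_n)$ term on the right-hand side). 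Applying (the stopped version of) Lemma \ref{lem:CompStrSols} therefore gives
\begin{align*}
\mathbb{E}\!\left(\sup_{\ell\in[t,T]}\left|X\!\left(\ell\wedge(\tau_\delta\vee\tau_{\delta,n})\right)-X_n\!\left(\ell\wedge(\tau_\delta\vee\tau_{\delta,n})\right)\right|^2\right)\leq K_3\Big(\|b-b_n\|^2+\|\sigma-\sigma_n\|^2+\epsilon_n+\!\int_{\mathbb{R}^{m_2}_0}\!\|\gamma(\cdot,\cdot,\cdot,z)-\gamma_n(\cdot,\cdot,\cdot,z)\|^2\nu(dz)\Big),
\end{align*}
where all norms are $L^\infty(\overline{Q^0}\times\mathcal U)$. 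By Assumption \ref{assump:CondsApproxFunts}-(ii) the right-hand side is $o_{1/n}(1)$, and then \eqref{eq:ProbSdeltan} follows at once from Chebyshev's inequality:
$\mathbb{P}(\mathcal{S}_{\delta,n})\leq (4/\delta^2)\,\mathbb{E}(\sup_\ell|X(\ell\wedge\cdot)-X_n(\ell\wedge\cdot)|^2)\to 0$.

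For the second assertion \eqref{eq:tautaudeltanSdeltan}, I would argue pathwise on $\omega\in\mathcal{S}_{\delta,n}^c$. Recall $x\in O$, so $\tau=\tau(t,x)$ is the first exit of $X$ from $Q$, i.e.\ the first time $X$ leaves $O$, while $\tau_\delta$ (resp.\ $\tau_{\delta,n}$) is the first time $X$ (resp.\ $X_n$) leaves $O_{\delta/2}$; since $O\subset O_{\delta/2}$ we always have $\tau\leq\tau_\delta$. The claim $\tau\wedge\tau_{\delta,n}=\tau$ is equivalent to $\tau\leq\tau_{\delta,n}$ on $\mathcal{S}_{\delta,n}^c$. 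Suppose, for contradiction, that $\tau_{\delta,n}(\omega)<\tau(\omega)$ for some such $\omega$. Then on $[t,\tau_{\delta,n}(\omega)]$ we have $\tau_{\delta,n}(\omega)<\tau(\omega)\leq\tau_\delta(\omega)$, so $\tau_\delta\vee\tau_{\delta,n}$ exceeds $\tau_{\delta,n}$ and in particular the $\sup$ in the definition of $\mathcal S_{\delta,n}$ controls the process at time $\tau_{\delta,n}$ (or at $\tau_{\delta,n}-$, using the c\`adl\`ag structure and the fact that the jump of $\gamma_n$-type terms is also controlled). At time $\tau_{\delta,n}$ the process $X_n$ has reached (or crossed) $\partial O_{\delta/2}$, hence $\operatorname{dist}(X_n(\tau_{\delta,n}),O)\geq\delta/2$; but $X(\tau_{\delta,n})\in\overline O$ because $\tau_{\delta,n}<\tau$, so $|X(\tau_{\delta,n})-X_n(\tau_{\delta,n})|\geq\delta/2$, which (after the routine c\`adl\`ag bookkeeping, possibly replacing $\tau_{\delta,n}$ by $\tau_{\delta,n}-$ and using that the jump at $\tau_{\delta,n}$ of $X_n$ cannot by itself produce the gap if we instead test at a time just before) forces $\omega\in\mathcal S_{\delta,n}$, a contradiction. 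Hence $\tau\leq\tau_{\delta,n}$ and \eqref{eq:tautaudeltanSdeltan} holds.

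\textbf{Main obstacle.} The analytic heart is genuinely routine: it is the stopped Gronwall/BDG computation of Lemma \ref{lem:CompStrSols} plus Chebyshev, with the mild extra bookkeeping of the $\sqrt{\epsilon_n}$ term. The delicate point is the pathwise exit-time comparison for \eqref{eq:tautaudeltanSdeltan}: because the processes are c\`adl\`ag, one must be careful about whether the relevant inequality is tested at $\tau_{\delta,n}$ or at $\tau_{\delta,n}-$, and one needs the geometric separation between $\overline O$ and $O_{\delta/2}^c$ (namely $\operatorname{dist}(\overline O,\partial O_{\delta/2})=\delta/2$) to convert ``$X_n$ has left $O_{\delta/2}$ while $X$ is still in $\overline O$'' into ``$|X-X_n|\geq\delta/2$'', thereby landing in $\mathcal S_{\delta,n}$. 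Handling the jump at the exit time cleanly — so that the event $\mathcal S_{\delta,n}$, defined with a strict sup over the \emph{whole} interval $[t,T]$ of the process stopped at $\tau_\delta\vee\tau_{\delta,n}$, really does capture this separation — is the step that needs the most care, and is presumably why the authors state the lemma with the process stopped at the \emph{larger} time $\tau_\delta\vee\tau_{\delta,n}$ rather than at $\tau_{\delta,n}$ alone.
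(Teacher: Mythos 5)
Your proposal matches the paper's own (very terse) proof: apply Lemma \ref{lem:CompStrSols} (rerun with the extra $\sqrt{\epsilon_n}\,d\widetilde{\mathcal{W}}$ contribution, which indeed does need to be absorbed into the Gronwall step as you note), then Chebyshev and Assumption \ref{assump:CondsApproxFunts}-(ii) for \eqref{eq:ProbSdeltan}, and the pathwise geometric argument for \eqref{eq:tautaudeltanSdeltan}. One small correction to your second half: the c\`adl\`ag bookkeeping you worry about is unnecessary, and in fact is not where the strict inequality $>\delta/2$ comes from. By right-continuity, $X_n(\tau_{\delta,n})\notin O_{\delta/2}$ without any need to test at $\tau_{\delta,n}-$; and since $\tau_{\delta,n}<\tau$ gives $X(\tau_{\delta,n})\in O$ (open, not merely $\overline{O}$ as you wrote), while the nearest point of $\overline{O}$ to $X_n(\tau_{\delta,n})$ lies on $\partial O$, you get the strict inequality $|X(\tau_{\delta,n})-X_n(\tau_{\delta,n})|>\operatorname{dist}(X_n(\tau_{\delta,n}),\overline{O})\geq\delta/2$ that is needed to conclude $\omega\in\mathcal{S}_{\delta,n}$.
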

\begin{proof}
Convergence \eqref{eq:ProbSdeltan} is a direct consequence of Lemma \ref{lem:CompStrSols}, Chebyshev's inequality and Assumption \ref{assump:CondsApproxFunts}-(ii), while \eqref{eq:tautaudeltanSdeltan} follows from the definition of $\mathcal{S}_{\delta,n}$.
\end{proof}

We first assume that $\Psi$ is more regular, i.e., $\Psi\in C^{1+\alpha/2,\,2+\alpha}(\overline{Q^{0}})$ for some $\alpha>0$. We will remove the regularity assumption on $\Psi$ at the end of this section. We obtain the following existence, uniqueness and regularity theorem using a result proved in \cite{Mou1:2016}.
\begin{theorem}\label{thm:uer}
Let $\mathcal{U}$ be a finite set, and let Assumption \ref{assump:Domain O} be valid. Let $\{b_{n}\}_{n\in\mathbb{N}}$, $\{\sigma_{n}\}_{n\in\mathbb{N}}$, $\{\gamma_{n}\}_{n\in\mathbb{N}}$, and $\{\Gamma_{n}\}_{n\in\mathbb{N}}$ be the sequences satisfying Assumption \ref{assump:CondsApproxFunts}-(i), and let $\Psi\in C^{1+\alpha/2,\,2+\alpha}(\overline{Q^{0}})$ for some $\alpha>0$ small enough. Then, there exists a unique viscosity solution
\begin{align*}
W_{\delta,n}\in C^{1+\alpha/2,2+\alpha}_{\rm loc}(Q_{\delta})\,\cap\,{\rm Lip}_{b}\left(\overline{Q^{0}}\right)
\end{align*}
to
\begin{align}\label{eq:HJBDeltan}
\inf_{u\in\mathcal{U}}\left(\mathscr{A}_{n}^{u}W_{\delta,n}(t,x)+\Gamma_{n}(t,x,u)\right)=0\quad\text{in }\,Q_{\delta},
\end{align}
with terminal-boundary condition
\begin{align*}
W_{\delta,n}(t,x)=\Psi(t,x),\quad (t,x)\in\partial_{{\rm np}}Q_{\delta},
\end{align*}
where for every $u\in\mathcal{U}$,
\begin{align*}
\mathscr{A}_{n}^{u}W_{\delta,n}(t,x)&:=\frac{\partial W_{\delta,n}}{\partial t}(t,x)+b_{n}(t,x,u)\cdot D_{x}W_{\delta,n}(t,x)+\frac{1}{2}{\rm tr}\!\left(\left(a_{n}(t,x,u)+\epsilon_{n}I\right)D_{x}^{2}W_{\delta,n}(t,x)\right)\\ &\quad\,+\int_{\mathbb{R}^{m_{2}}_{0}}\left(W_{\delta,n}\left(t,x+\gamma_{n}(t,x,u,z)\right)-W_{\delta,n}(t,x)-D_{x}W_{\delta,n}(t,x)\cdot\gamma_{n}(t,x,u,z)\right)\nu(dz),
\end{align*}
and where $a_{n}(t,x,u):=\sigma_{n}(t,x,u)\sigma_{n}^{T}(t,x,u)$.
\end{theorem}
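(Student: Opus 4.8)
The plan is to obtain Theorem \ref{thm:uer} from the existence, interior regularity, and comparison results for uniformly parabolic nonlocal Bellman equations proved in \cite{Mou1:2016}, after checking that \eqref{eq:HJBDeltan} falls into that framework and supplying the missing boundary barriers and a global Lipschitz estimate. The decisive structural observation is that, for each fixed $n$, the extra term $\tfrac12\epsilon_n\,{\rm tr}(D_x^2\cdot)$ makes every operator $\mathscr{A}_n^u$ uniformly parabolic with ellipticity constant at least $\epsilon_n$, uniformly in $(t,x,u)$; in addition $b_n,\sigma_n,\gamma_n,\Gamma_n$ are Lipschitz in $(t,x)$ by Assumption \ref{assump:CondsApproxFunts}-(i), $\mathcal{U}$ is finite, $\Psi\in C^{1+\alpha/2,\,2+\alpha}(\overline{Q^0})$, the L\'evy measure satisfies $\int_{\mathbb{R}^{m_2}_0}(|z|^2\wedge1)\,\nu(dz)<\infty$ with $|\gamma_n(\cdot,\cdot,\cdot,z)|\le\widetilde{C}\rho(z)$ and $M=\int_{\mathbb{R}^{m_2}_0}\rho^2(z)\,\nu(dz)<\infty$, and $O_\delta$ obeys the uniform exterior ball condition of Theorem \ref{thm:proxdomain}. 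These are exactly the hypotheses under which \cite{Mou1:2016} yields interior $C^{1+\alpha/2,\,2+\alpha}_{\rm loc}$ estimates and a comparison principle for bounded viscosity sub/supersolutions of the Dirichlet problem \eqref{eq:HJBDeltan}.

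First I would produce a continuous viscosity solution. The functions $\Psi\pm C_1(T-t)$ are, for $C_1$ large, respectively a viscosity supersolution and a viscosity subsolution of \eqref{eq:HJBDeltan} in $Q_\delta$ lying above/below $\Psi$ on $\partial_{\rm np}Q_\delta$ (the bound $\sup_{u\in\mathcal{U}}\|\mathscr{A}_n^u\Psi\|_{L^\infty(Q_\delta)}<\infty$ uses $\Psi\in C^{1+\alpha/2,\,2+\alpha}(\overline{Q^0})$, the boundedness of $b_n,\sigma_n,\gamma_n$, finiteness of $\mathcal{U}$, and $M<\infty$). Hence Perron's method — the supremum of viscosity subsolutions lying below $\Psi+C_1(T-t)$ — gives a bounded viscosity solution $W_{\delta,n}$, and it remains to check that $W_{\delta,n}$ attains $\Psi$ continuously on $\partial_{\rm np}Q_\delta$. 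At terminal points $t=T$ this is immediate from the barriers $\Psi\pm C_1(T-t)$. At a lateral point $x_0\in\partial O_\delta$ one uses the exterior ball $\overline{B}_{\eta/2}(y_{x_0})$ from Theorem \ref{thm:proxdomain}: to $\Psi$ one adds a classical exponential barrier formed from the squared distance to $y_{x_0}$, which is a supersolution of the local part of $\mathscr{A}_n^u$ for a suitable choice of parameters (by uniform parabolicity, since $a_n+\epsilon_nI\ge\epsilon_nI$, and the boundedness of $b_n$ and of $\mathscr{A}_n^u\Psi+\Gamma_n$), and whose own nonlocal contribution is bounded by $C\,\|D_x^2(\text{barrier})\|_{L^\infty(Q_\delta)}\,M$ and is therefore absorbed by enlarging the barrier; the analogous lower barrier is symmetric. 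Squeezing $W_{\delta,n}$ between $\Psi$ and these barriers via the comparison principle gives continuity of $W_{\delta,n}$ at $\partial_{\rm np}Q_\delta$, with a linear modulus.

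Next I would upgrade $W_{\delta,n}$ to ${\rm Lip}_b(\overline{Q^0})$ and then to $C^{1+\alpha/2,\,2+\alpha}_{\rm loc}(Q_\delta)$. Global Lipschitz continuity follows from the boundary estimate above combined with an interior Lipschitz estimate obtained by the translation–comparison method: for a unit vector $e$ and small $h>0$, comparing $W_{\delta,n}(\cdot,\cdot)$ on $Q_\delta$ with $W_{\delta,n}(\cdot,\cdot+he)$ shifted by a quantity of order $h$, where the mismatch between the two equations is $O(\widetilde{C}_nh)$ by Assumption \ref{assump:CondsApproxFunts}-(i), so the comparison principle yields a Lipschitz constant (depending on $\delta,n$); equivalently one identifies $W_{\delta,n}$ with the value function of the auxiliary control problem with coefficients $b_n,\sigma_n,\gamma_n,\Gamma_n$, the extra $\sqrt{\epsilon_n}$-Wiener noise, and exit time $\tau_{\delta}$, and reads off the bound from Lemmas \ref{lem:CompStrSols} and \ref{lem:CompCostFunt}. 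With $W_{\delta,n}$ now a bounded Lipschitz viscosity solution of the uniformly parabolic nonlocal Bellman equation \eqref{eq:HJBDeltan} with finite $\mathcal{U}$ and Lipschitz coefficients, the interior regularity theorem of \cite{Mou1:2016} (after possibly shrinking $\alpha$) places $W_{\delta,n}$ in $C^{1+\alpha/2,\,2+\alpha}_{\rm loc}(Q_\delta)$, so that it is a classical solution of \eqref{eq:HJBDeltan}; uniqueness is the comparison principle of \cite{Mou1:2016}, or alternatively the standard doubling-of-variables argument for second-order nonlocal equations, using the uniform parabolicity and the Lipschitz bounds of Assumption \ref{assump:CondsApproxFunts}-(i).

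The step I anticipate to be the main obstacle is the careful verification that \eqref{eq:HJBDeltan} meets the precise structural hypotheses of the interior Schauder and comparison theorems of \cite{Mou1:2016} — in particular that the L\'evy measure $\nu$, the growth control $|\gamma_n(\cdot,\cdot,\cdot,z)|\le\widetilde{C}\rho(z)$, and the integrability $M<\infty$ fit the class of nonlocal operators treated there — together with the routine but fiddly point of promoting the classical local boundary barriers to genuine supersolutions of the full nonlocal operator by absorbing their bounded jump contributions, as indicated above.
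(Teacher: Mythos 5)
Your core observations are exactly those of the paper: $a_n+\epsilon_n I\ge\epsilon_n I$ makes $\mathscr{A}_n^u$ uniformly parabolic, the coefficients $b_n,\sigma_n,\gamma_n,\Gamma_n$ are Lipschitz and bounded with $|\gamma_n|\le\widetilde{C}\rho(z)$ and $M<\infty$, $\mathcal U$ is finite, $\Psi\in C^{1+\alpha/2,\,2+\alpha}(\overline{Q^0})$, and $O_\delta$ inherits the uniform exterior ball condition from Theorem~\ref{thm:proxdomain}. The difference is that the paper's proof stops there: it simply notes that these facts place \eqref{eq:HJBDeltan} squarely within the scope of \cite[Theorem 5.3]{Mou1:2016}, which already packages existence, uniqueness, global Lipschitz continuity, and interior $C^{1+\alpha/2,\,2+\alpha}_{\rm loc}$ regularity, so the theorem is a one-line citation. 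Your proposal instead re-derives the content of that cited theorem (Perron's method with constant-in-space barriers $\Psi\pm C_1(T-t)$, lateral barriers from the exterior ball, translation--comparison for the global Lipschitz bound, interior Schauder from \cite{Mou1:2016}, comparison for uniqueness). That route is viable and is morally how \cite[Theorem 5.3]{Mou1:2016} is proved, but it adds work the paper deliberately delegates; the only place where you would really need to be careful is the step you yourself flag, namely that the exponential boundary barrier is only locally defined while the nonlocal term sees the function globally, so one must extend the barrier (e.g.\ by truncation and an infimum over boundary points, as in the paper's Lemmas~\ref{lem:barfunbdy}--\ref{lem:barfun1}, which are however used later for a different purpose, namely to get a modulus independent of $n$ and $\delta$) before invoking comparison. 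With that detail supplied your argument is correct and reaches the same conclusion.
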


\begin{proof}
Since $\delta\in (0,\eta/2)$, by Theorem \ref{thm:proxdomain}, $O_{\delta}$ satisfies the uniform exterior ball condition with a uniform radius $\eta/2$. It is easy to verify that all the coefficients $b_{n}$, $\sigma_{n}$, $\gamma_{n}$ and the boundary data $\Psi$ satisfy the same regularity and boundedness conditions as required in~\cite[Theorem 5.3]{Mou1:2016}. Since $a_{n}+\epsilon_{n}I=\sigma_{n}\sigma_{n}^{T}+\epsilon_{n}I\geq\epsilon_{n}I$, the operator $\mathcal{A}_{n}^u$ is uniformly parabolic in $Q_{\delta}$. The result follows immediately from \cite[Theorem 5.3]{Mou1:2016}.
\end{proof}
\begin{theorem}\label{thm:wdeltan}
Under the assumptions of Theorem \ref{thm:uer}, for any $x\in\mathbb{R}^{d}$,
\begin{align*}
W_{\delta,n}(t,x)&=\inf_{U\in\mathcal{A}_{\mu_{1}}}\mathbb{E}\left(\int_{t}^{\tau(t,x)\wedge\tau_{\delta,n}(t,x)}\Gamma_{n}\left(s,X_{n}(s;t,x),U(s)\right)ds\right.\\
&\qquad\qquad\quad\,\,\,+W_{\delta,n}\left(\tau(t,x)\wedge\tau_{\delta,n}(t,x),X_{n}(\tau(t,x)\wedge\tau_{\delta,n}(t,x);t,x)\right)\bigg).
\end{align*}
\end{theorem}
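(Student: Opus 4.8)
The plan is to imitate closely the argument of Theorem \ref{thm:DPSmooth}, exploiting the interior regularity $W_{\delta,n}\in C^{1+\alpha/2,2+\alpha}_{\rm loc}(Q_\delta)$ together with the fact that $W_{\delta,n}\in{\rm Lip}_b(\overline{Q^0})$ is continuous up to the terminal-boundary. The key point that makes It\^o's formula legitimate here, despite $W_{\delta,n}$ being only locally $C^{1,2}$ in $Q_\delta$, is that the relevant stopping time $\tau\wedge\tau_{\delta,n}$ keeps the process $X_n(s;t,x)$ strictly inside $O_\delta$ (indeed inside $\overline{O}_{\delta/2}$) for $s<\tau\wedge\tau_{\delta,n}$, and the jumps of $X_n$ may exit $O_{\delta/2}$ but their sizes are controlled by $\rho(z)$; one first proves the identity on $[t,(\tau\wedge\tau_{\delta,n})\wedge\theta_k]$ for stopping times $\theta_k\uparrow\tau\wedge\tau_{\delta,n}$ localizing $X_n$ into compact cylindrical subregions $\widetilde{\mathcal Q}\subset\subset Q_\delta$, then let $k\to\infty$ using the continuity of $W_{\delta,n}$ on $\overline{Q^0}$, dominated convergence, and the second moment estimates of Theorem \ref{thm:StrSolsSDEs} / Lemma \ref{lem:2MomentEst}. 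I would therefore first state and justify the It\^o expansion
\begin{align*}
W_{\delta,n}(t,x)&=\mathbb{E}\left(\int_t^{\tau\wedge\tau_{\delta,n}}\Gamma_n(s,X_n(s),U(s))\,ds+W_{\delta,n}(\tau\wedge\tau_{\delta,n},X_n(\tau\wedge\tau_{\delta,n}))\right)\\
&\quad -\mathbb{E}\left(\int_t^{\tau\wedge\tau_{\delta,n}}\left(\mathscr{A}_n^{U(s)}W_{\delta,n}(s,X_n(s))+\Gamma_n(s,X_n(s),U(s))\right)ds\right),
\end{align*}
valid for every $U\in\mathcal{A}_{\mu_1}$, the stochastic integrals being true martingales because $D_xW_{\delta,n}$ is bounded and the jump term is square-integrable by Assumption \ref{assump:CondsApproxFunts}-(i).

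Once this identity is in hand, the inequality $\leq$ in the claimed formula is immediate: since $W_{\delta,n}$ solves \eqref{eq:HJBDeltan} in $Q_\delta$ and $(s,X_n(s))\in Q_\delta$ for $s<\tau\wedge\tau_{\delta,n}$, we have $\mathscr{A}_n^{U(s)}W_{\delta,n}(s,X_n(s))+\Gamma_n(s,X_n(s),U(s))\geq \inf_{u}(\cdots)=0$, so the last expectation is $\geq 0$ and $W_{\delta,n}(t,x)\leq \mathbb{E}(\cdots)$ for every $U$; taking the infimum gives one direction. Note the subtlety that the equation holds on the parabolic interior $Q_\delta$ but the process is stopped at $\tau\wedge\tau_{\delta,n}$, which is $\leq$ the exit time from $O_{\delta/2}$, and at the terminal time $T$; since $s<\tau\wedge\tau_{\delta,n}$ forces $X_n(s)\in O_{\delta/2}\subset O_\delta$ and $s<T$, the point $(s,X_n(s))$ is genuinely in $Q_\delta$, so the subsolution inequality applies pointwise for a classical solution.

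For the reverse inequality I would reproduce the $\varepsilon$-optimal (Markov) control construction of Theorem \ref{thm:DPSmooth} verbatim, now in the finite control set $\mathcal{U}$: fix $\kappa>0$, use the uniform continuity of $\mathscr{A}_n^uW_{\delta,n}+\Gamma_n(\cdot,\cdot,u)$ on any fixed compact subregion together with the finiteness of $\mathcal{U}$ to partition $[t,T]$ into $M_1$ time-slabs and $\overline{O}_{\delta/2}$ into finitely many small Borel pieces $O_k$, on each of which we can pick $u_{jk}\in\mathcal{U}$ making $\mathscr{A}_n^{u_{jk}}W_{\delta,n}+\Gamma_n(\cdot,\cdot,u_{jk})<\kappa$; define the piecewise-constant Markov control $U$ exactly as there, build the corresponding $X_n(\cdot;t,x)$, introduce the good events $\Omega_j$ on which the process moves by less than $\delta/4$ (say) over a slab, and estimate $\mathbb{P}(\Omega_j^c)$ by Lemma \ref{lem:2MomentEst} applied to $X_n$ (whose coefficients satisfy Assumption \ref{assump:SDECoefs} with a constant independent of $n$ by Assumption \ref{assump:CondsApproxFunts}-(i), and the extra $\sqrt{\epsilon_n}\,d\widetilde{\mathcal{W}}$ term only improves the bound). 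On $\Omega_j$ the integrand in the last expectation of the It\^o identity is $<\kappa$, on $\Omega_j^c$ it is bounded by $\|\mathscr{A}_n^uW_{\delta,n}+\Gamma_n\|_{L^\infty([t,T]\times\overline{O}_{\delta/2}\times\mathcal{U})}$ (finite by interior regularity on the relatively compact $\overline{O}_{\delta/2}\subset O_\delta$), so the last expectation is $\leq \kappa(T-t)+o(1)$ as $M_1\to\infty$; letting $M_1\to\infty$ then $\kappa\to0$ yields $W_{\delta,n}(t,x)\geq \inf_{U}\mathbb{E}(\cdots)$.

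The main obstacle is the justification of the It\^o formula up to the stopping time $\tau\wedge\tau_{\delta,n}$ given only $C^{1+\alpha/2,2+\alpha}_{\rm loc}(Q_\delta)$ regularity: one must choose the localizing sequence $\theta_k$ so that the stopped process stays in a compact cylindrical region where $W_{\delta,n}$ and its derivatives are bounded, control the jump part (the post-jump point $X_n(s-)+\gamma_n(s,X_n(s-),U(s),z)$ may land outside $O_{\delta/2}$, but by the Lipschitz-in-$O_\delta$ regularity and the $L^\infty$ bound $\|\gamma_n(\cdot,\cdot,\cdot,z)\|\leq \widetilde C\rho(z)$ the increment $W_{\delta,n}(s,X_n(s-)+\gamma_n)-W_{\delta,n}(s,X_n(s-))$ is bounded by $\widetilde C\,{\rm Lip}(W_{\delta,n})\rho(z)$, which is $\nu$-square-integrable), and pass to the limit using continuity of $W_{\delta,n}$ on $\overline{Q^0}$ at the (possibly boundary or terminal) endpoint $\tau\wedge\tau_{\delta,n}$. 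A secondary technical point, as in Theorem \ref{thm:DPSmooth}, is that the infimum over $\mathcal{A}_{\mu_1}$ does not depend on the particular $\mu_1$, since the Markov control above can be realized on any extended generalized reference probability space and the resulting law of $(X_n,\tau\wedge\tau_{\delta,n})$ is the same.
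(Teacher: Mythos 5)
Your approach is in the same spirit as the paper's (use the interior smoothness of $W_{\delta,n}$ to reduce to the $C^{1,2}$ case of Theorem \ref{thm:DPSmooth}), but the paper organizes the reduction differently, and the way you propose to carry it out leaves two genuine gaps that the paper's organization is designed to avoid.

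First, you invoke It\^o's formula directly for $W_{\delta,n}$, localized by stopping times $\theta_k\uparrow\tau\wedge\tau_{\delta,n}$, on the grounds that the stopped process stays in a compact cylinder $\widetilde{\mathcal Q}\subset\subset Q_\delta$. For a jump diffusion this is a genuinely nonstandard version of It\^o's formula: the compensator integrand $W_{\delta,n}(s,X_n(s-)+\gamma_n)-W_{\delta,n}(s,X_n(s-))-D_xW_{\delta,n}(s,X_n(s-))\cdot\gamma_n$ evaluates $W_{\delta,n}$ at the post-jump point, which can be far outside $O_\delta$, so you need a statement of It\^o's formula for functions that are $C^{1,2}$ only in a neighborhood of the continuous part of the path and merely Lipschitz globally. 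You flag this as the main obstacle but do not fill it in. The paper fills it in by replacing $W_{\delta,n}$ with globally $C^{1+\alpha/2,2+\alpha}([0,\ell]\times\mathbb R^d)$ functions $W_{\delta,n,m}$ that agree with $W_{\delta,n}$ on $[0,T]\times\overline{O_{3\delta/4}}$; each $W_{\delta,n,m}$ then solves the HJB equation on $[0,T)\times O_{\delta/2}$ with $\Gamma_n$ replaced by a perturbation $\Gamma_{n,m}$ whose extra term lives only on $\{|\gamma_n|\geq\delta/4\}$ and hence is $O(\|W_{\delta,n}-W_{\delta,n,m}\|_\infty)\cdot\nu(\{\widetilde C\rho\geq\delta/4\})$. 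Theorem \ref{thm:DPSmooth} applies verbatim to $W_{\delta,n,m}$ (which is globally smooth in the needed range), and one just lets $m\to\infty$. This avoids proving any new It\^o formula.

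Second, your $\varepsilon$-optimal Markov control argument uses the bound $\|\mathscr A_n^u W_{\delta,n}+\Gamma_n\|_{L^\infty([t,T]\times\overline{O_{\delta/2}}\times\mathcal U)}<\infty$, which you justify by ``interior regularity on the relatively compact $\overline{O}_{\delta/2}\subset O_\delta$.'' But the cylinder $[t,T]\times\overline{O_{\delta/2}}$ is \emph{not} compactly contained in $Q_\delta=[0,T)\times O_\delta$ because $T\notin[0,T)$; the interior Schauder estimate of Theorem \ref{thm:uer} gives nothing up to the terminal hyperplane, so this sup may not be finite. The paper sidesteps this by applying Theorem \ref{thm:DPSmooth} with the stopping time $\theta_U=\tau_{\delta,n}\wedge\ell$ for fixed $\ell<T$, where $W_{\delta,n,m}$ is $C^{1+\alpha/2,2+\alpha}$ on $[0,\ell]\times\mathbb R^d$ and the required sup is finite, and only afterwards sends $\ell\to T$ using Lemma \ref{lem:2MomentEst} together with the uniform (Lipschitz) continuity of $W_{\delta,n}$ on $\overline{Q^0}$. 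You do mention this final passage in principle, but your argument already relied on the $L^\infty$ bound at $T$ earlier, so the two steps are out of order.

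In short: the reverse direction of your proof does not need to be re-derived from scratch; once the $\delta$-truncated, $m$-mollified representation is obtained by quoting Theorem \ref{thm:DPSmooth}, both inequalities come for free. The decisive point you are missing is the mollification $W_{\delta,n}\rightsquigarrow W_{\delta,n,m}$ and its effect on the running cost $\Gamma_n\rightsquigarrow\Gamma_{n,m}$, plus the $\ell<T$ truncation; these turn your informal localization into a rigorous argument.
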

\begin{proof}
By Theorem \ref{thm:uer}, $W_{\delta,n}\in C^{1+\alpha/2,2+\alpha}_{\rm loc}(Q_{\delta})\cap{\rm Lip}_{b}\left(\overline{Q^{0}}\right)$ is a classical solution to \eqref{eq:HJBDeltan}. Then, there exists a sequence of functions $\{W_{\delta,n,m}\}_{m\in\mathbb N}$ such that $W_{\delta,n,m}\equiv W_{\delta,n}$ in $[0,T]\times \overline{\Omega_{3\delta/4}}$, $W_{\delta,n,m}\rightarrow W_{\delta,n}$ uniformly in $\overline{Q^{0}}$ as $m\rightarrow\infty$, and $W_{\delta,n,m}\in C^{1+\alpha/2,2+\alpha}([0,\ell]\times \mathbb R^d)$ for any fixed $\ell\in (t,T)$. We notice that $W_{\delta,n,m}$ satisfies a different equation, which is
\begin{eqnarray*}
\inf_{u\in\mathcal{U}}\left(\mathscr{A}_{n}^{u}W_{\delta,n,m}(t,x)+\Gamma_{n,m}(t,x,u)\right)=0\quad\text{in }\,[0,T)\times O_{\delta/2},
\end{eqnarray*}
where
\begin{align*}
&\Gamma_{n,m}(t,x,u)=\Gamma_{n}(t,x,u)+\int_{\mathbb{R}^{m_{2}}_{0}}\left(W_{\delta,n}\left(t,x+\gamma_{n}(t,x,u,z)\right)-W_{\delta,n,m}\left(t,x+\gamma_{n}(t,x,u,z)\right)\right)\nu(dz)
\\
&=\Gamma_{n}(t,x,u)+\int_{\{z\in \mathbb R_0^{m_2}:\,|\gamma_n(t,x,u,z)|\geq \delta/4\}}\left|W_{\delta,n}\left(t,x+\gamma_{n}(t,x,u,z)\right)-W_{\delta,n,m}\left(t,x+\gamma_{n}(t,x,u,z)\right)\right|\nu(dz).
\end{align*}
Since $W_{\delta,n,m}\in C^{1+\alpha/2,2+\alpha}([0,\ell]\times\mathbb R^d)$, applying Theorem $\ref{thm:DPSmooth}$ with $\theta_{U}=\tau_{\delta,n}\wedge\ell$, we have
\begin{align*}
W_{\delta,n,m}(t,x)&=\inf_{U\in\mathcal{A}_{\mu_{1}}}\mathbb{E}\left(\int_{t}^{\tau\wedge\tau_{\delta,n}
\wedge\ell}\Gamma_{n,m}\left(s,X_{n}(s;t,x),U(s)\right)ds\right.\nonumber\\
&\qquad\qquad\quad\,\,\,+W_{\delta,n,m}\left(\tau\wedge\tau_{\delta,n}\wedge\ell,X_{n}(\tau\wedge\tau_{\delta,n}
\wedge\ell;t,x)\right)\bigg).
\end{align*}
We claim that $\Gamma_{n,m}\to \Gamma_n$ uniformly in $\overline{Q^{0}}\times U$. We first notice that
\begin{equation*}
\frac{\delta^2}{16{\widetilde{C}}^2}\int_{\{z\in \mathbb R_0^{m_2}:\,\widetilde{C}\rho(z)\geq \delta/4\}}\nu(dz)\leq \int_{\{z\in \mathbb R_0^{m_2}:\,\widetilde{C}\rho(z)\geq \delta/4\}}\rho^2(z)\nu(dz)\leq\int_{\mathbb R_0^{m_2}}\rho^2(z)\nu(dz),
\end{equation*}
where $\widetilde{C}$ is from Assumption \ref{assump:CondsApproxFunts}-(i).
Using the above inequality and Assumption \ref{assump:CondsApproxFunts}-(i), we have for any $(t,x,u)\in [0,T)\times O_{\delta/2}\times U$
\begin{align*}
&\int_{\{z\in \mathbb R_0^{m_2}:\,|\gamma_n(t,x,u,z)|\geq \delta/4\}}\left|W_{\delta,n}\left(t,x+\gamma_{n}(t,x,u,z)\right)-W_{\delta,n,m}\left(t,x+\gamma_{n}(t,x,u,z)\right)\right|\nu(dz)\\
\leq&\int_{\{z\in \mathbb R_0^{m_2}:\,\widetilde{C}\rho(z)\geq \delta/4\}}\left|W_{\delta,n}\left(t,x+\gamma_{n}(t,x,u,z)\right)-W_{\delta,n,m}\left(t,x+\gamma_{n}(t,x,u,z)\right)\right|\nu(dz)\\
\leq& o_{1/m}(1)\int_{\{z\in \mathbb R_0^{m_2}:\,\widetilde{C}\rho(z)\geq \delta/4\}}\nu(dz)\leq o_{1/m}(1)\frac{16{\widetilde{C}}^2}{\delta^2}\int_{\mathbb R_0^{m_2}}\rho^2(z)\nu(dz).
\end{align*}
Letting $m\rightarrow\infty$ in both sides of the dynamic programming equality we thus get
\begin{align*}
W_{\delta,n}(t,x)&=\inf_{U\in\mathcal{A}_{\mu_{1}}}\mathbb{E}\left(\int_{t}^{\tau\wedge\tau_{\delta,n}
\wedge\ell}\Gamma_{n}\left(s,X_{n}(s;t,x),U(s)\right)ds\right.\nonumber\\
&\qquad\qquad\quad\,\,\,+W_{\delta,n}\left(\tau\wedge\tau_{\delta,n}\wedge\ell,X_{n}(\tau\wedge\tau_{\delta,n}
\wedge\ell;t,x)\right)\bigg).
\end{align*}
It remains to use Lemma \ref{lem:2MomentEst} and let $\ell\to T$ to conclude the proof.
\end{proof}


It is well known that, under Assumptions \ref{assump:SDECoefs} and \ref{assump:GammaPsi}, comparison principle holds for
equation
\begin{align}\label{eq:HJBDelta}
\inf_{u\in\mathcal{U}}\left(\mathscr{A}^{u}W_{\delta}(t,x)+\Gamma(t,x,u)\right)=0\quad\text{in }Q_{\delta},
\end{align}
with the terminal-boundary condition
\begin{align*}
W_{\delta}(t,x)=\Psi(t,x),\quad (t,x)\in\partial_{\text{np}}Q_{\delta}.
\end{align*}
Moreover, under Assumptions \ref{assump:Domain O} and \ref{assump:ellpiticity along boundary}, the above parabolic Dirichlet problem admits a unique viscosity solution $W_{\delta}\in C_{b}(\overline{Q^{0}})$. The same results hold when $Q_{\delta}$ is replaced by $Q$. We refer the reader to, e.g., \cite[Theorem 3]{BarlesImbert:2008} and~\cite[Theorem 3.1]{JakobsenKarlsen:2006}, for proofs of comparison principle, and to, e.g., \cite[Theorem 3.2]{Mou:2017} and~\cite[Theorem 5.1]{Mou:2018}, for proofs of the existence results.

\medskip
For any $(t,x)\in \overline{Q^{0}}$, let
\begin{align*}
\widetilde{W}_{\delta}(t,x)&:=\lim_{k\rightarrow\infty}\sup\left\{ W_{\delta,n}(s,y):\,\,n\geq k,\,s\in[0,T]\cap\left[t-\frac{1}{k},t+\frac{1}{k}\right],\,y\in\overline{B}_{1/k}(x)\right\},\\
\widetilde{W}^{\delta}(t,x)&:= \lim_{k\rightarrow\infty}\inf\left\{W_{\delta,n}(s,y):\,\,n\geq k,\,s\in[0,T]\cap\left[t-\frac{1}{k},t+\frac{1}{k}\right],\,y\in\overline{B}_{1/k}(x)\right\}.
\end{align*}
\begin{lemma}\label{lem:VisSubSuperSolsWDelta}
Let the assumptions of Theorem \ref{thm:uer} be satisfied. Let $\{b_{n}\}_{n\in\mathbb{N}}$, $\{\sigma_{n}\}_{n\in\mathbb{N}}$, $\{\gamma_{n}\}_{n\in\mathbb{N}}$ and $\{\Gamma_{n}\}_{n\in\mathbb{N}}$ also satisfy Assumption \ref{assump:CondsApproxFunts}-(ii). Then, the function $\widetilde{W}_{\delta}$ (respectively, $\widetilde{W}^{\delta}$) is a viscosity subsolution (respectively, supersolution) to $\eqref{eq:HJBDelta}$.
\end{lemma}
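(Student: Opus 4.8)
The plan is to run the Barles--Perthame half-relaxed limit (stability) argument adapted to the nonlocal setting, showing that $\widetilde W_\delta$ is a viscosity subsolution to \eqref{eq:HJBDelta}; the assertion for $\widetilde W^\delta$ then follows by the symmetric argument with all inequalities reversed (and Fatou's lemma replacing the small-jump upper bound). First I would record that, by Theorem \ref{thm:wdeltan} (or, alternatively, the comparison principle for \eqref{eq:HJBDelta}), $\|W_{\delta,n}\|_{L^\infty(\overline{Q^0})}\le\|\Psi\|_{L^\infty(\overline{Q^0})}+T\widetilde C$ for all $n$; hence $\widetilde W_\delta\in{\rm USC}_b(Q^0)$ and $\widetilde W^\delta\in{\rm LSC}_b(Q^0)$ are well defined, bounded, with $\widetilde W^\delta\le\widetilde W_\delta$ on $\overline{Q^0}$.

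Next, let $\varphi\in C^{1,2}_b(Q^0)$ be a test function such that $\widetilde W_\delta-\varphi$ has a maximum over $Q^0$ at some $(t_0,x_0)\in Q_\delta$. After adding to $\varphi$ a nonnegative bounded smooth penalization vanishing to second order at $(t_0,x_0)$ I may assume the maximum is strict; using the uniform $L^\infty$ bound on $W_{\delta,n}$ together with the boundary identity $W_{\delta,n}=\Psi$ on $\partial_{\rm np}Q_\delta$ to prevent maximizers from escaping to $O_\delta^c$, one extracts $n_k\uparrow\infty$ and points $(t_k,x_k)\in Q_\delta$ with $(t_k,x_k)\to(t_0,x_0)$ and $W_{\delta,n_k}(t_k,x_k)\to\widetilde W_\delta(t_0,x_0)$ such that $W_{\delta,n_k}-\varphi$ attains a \emph{global} maximum over $Q^0$ at $(t_k,x_k)$. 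Since $W_{\delta,n_k}$ is a classical solution of \eqref{eq:HJBDeltan} in $Q_\delta$ by Theorem \ref{thm:uer}, $\mathscr{A}_{n_k}^u W_{\delta,n_k}(t_k,x_k)+\Gamma_{n_k}(t_k,x_k,u)\ge0$ for every $u\in\mathcal U$; and at the interior maximum $(t_k,x_k)$ the first/second order conditions $\partial_t W_{\delta,n_k}\le\partial_t\varphi$, $D_x W_{\delta,n_k}=D_x\varphi$, $D_x^2 W_{\delta,n_k}\le D_x^2\varphi$ there, the global inequality $W_{\delta,n_k}(t_k,x_k+v)-W_{\delta,n_k}(t_k,x_k)\le\varphi(t_k,x_k+v)-\varphi(t_k,x_k)$ for all $v\in\mathbb R^d$, and $a_{n_k}+\epsilon_{n_k}I\ge0$ allow me to replace $W_{\delta,n_k}$ by $\varphi$ everywhere and conclude $0\le\mathscr{A}_{n_k}^u\varphi(t_k,x_k)+\Gamma_{n_k}(t_k,x_k,u)$, with the nonlocal part of $\mathscr{A}_{n_k}^u$ built from $\gamma_{n_k}$.

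The hard part is passing to the limit in this last inequality, specifically in the nonlocal term. The local terms converge to their values at $(t_0,x_0,u)$ using $\epsilon_{n_k}\to0$, the uniform convergences $b_{n_k}\to b$, $\sigma_{n_k}\to\sigma$ (hence $a_{n_k}\to a$), $\Gamma_{n_k}\to\Gamma$ of Assumption \ref{assump:CondsApproxFunts}-(ii), the uniform continuity of $b,\sigma,\Gamma$ in Assumption \ref{assump:SDECoefs}, and the continuity of $\partial_t\varphi,D_x\varphi,D_x^2\varphi$. For the nonlocal term I would fix $r>0$ and split $\nu$ over $\{|z|<r\}$ and $\{|z|\ge r\}$: on $\{|z|<r\}$, Taylor's theorem and $|\gamma_{n_k}(\cdot,\cdot,\cdot,z)|\le\widetilde C\rho(z)$ give the $k$-uniform bound $\tfrac12\|D_x^2\varphi\|_{L^\infty}\widetilde C^2\rho^2(z)$, integrating to $o_r(1)$ because $\int_{|z|<r}\rho^2\,\nu\to0$; on $\{|z|\ge r\}$, along a further subsequence $\|\gamma_{n_k}(\cdot,\cdot,\cdot,z)-\gamma(\cdot,\cdot,\cdot,z)\|_{L^\infty}\to0$ for $\nu$-a.e.\ $z$ (from the second convergence in Assumption \ref{assump:CondsApproxFunts}-(ii)), so the integrand converges pointwise to $\varphi(t_0,x_0+\gamma(t_0,x_0,u,z))-\varphi(t_0,x_0)-D_x\varphi(t_0,x_0)\cdot\gamma(t_0,x_0,u,z)$ and is dominated by $2\|\varphi\|_{L^\infty}+\|D_x\varphi\|_{L^\infty}\widetilde C\rho(z)$, which is $\nu$-integrable on $\{|z|\ge r\}$ since $\nu(\{|z|\ge r\})<\infty$ and $\int_{|z|\ge r}\rho\,\nu\le M^{1/2}\nu(\{|z|\ge r\})^{1/2}<\infty$ by Cauchy--Schwarz; dominated convergence then handles this piece. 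Letting $k\to\infty$ and then $r\to0$ gives $0\le\mathscr{A}^u\varphi(t_0,x_0)+\Gamma(t_0,x_0,u)$ for every $u\in\mathcal U$; since $\mathcal U$ is finite the infimum over $u$ is attained, and finally letting the penalization coefficient tend to $0$ (it contributes only an $O(\varepsilon)$ quantity bounded in terms of $C,\widetilde C,M$ and $\nu$, being nonnegative and vanishing to second order at $(t_0,x_0)$) yields $\inf_{u\in\mathcal U}(\mathscr{A}^u\varphi(t_0,x_0)+\Gamma(t_0,x_0,u))\ge0$, i.e.\ $\widetilde W_\delta$ is a viscosity subsolution to \eqref{eq:HJBDelta}.

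I expect the genuine obstacle, beyond bookkeeping, to be exactly this uniform control of the nonlocal integral: because the classical solutions $W_{\delta,n}$ carry $n$-dependent $C^{2}$-bounds, the small-jump part of the integral cannot be estimated through $W_{\delta,n}$ directly and must be bounded via the test function, which is precisely why the \emph{global} maximum of $W_{\delta,n_k}-\varphi$ (not merely a local one) is needed; and the large-jump part rests on the $\nu$-integrability of $\rho$ coming from $M<\infty$ and on $\gamma_n(\cdot,\cdot,\cdot,z)\to\gamma(\cdot,\cdot,\cdot,z)$ for $\nu$-a.e.\ $z$. A secondary technical point is ensuring the approximating maxima $(t_k,x_k)$ stay near $(t_0,x_0)$, for which one uses the uniform bound on $W_{\delta,n}$ together with $W_{\delta,n}=\Psi$ on $\partial_{\rm np}Q_\delta$.
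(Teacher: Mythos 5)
Your argument is the Barles--Perthame half-relaxed-limit stability proof that the paper itself uses: make the touching at $(t_0,x_0)$ strict, localize (near-)maximizers of $W_{\delta,n_k}-\varphi$ near $(t_0,x_0)$ using that $\widetilde W_\delta$ is the relaxed $\limsup$ and that $W_{\delta,n}=\Psi$ off $O_\delta$, read off the viscosity inequality $\inf_{u}\bigl(\mathscr{A}_{n_k}^u\varphi(t_k,x_k)+\Gamma_{n_k}(t_k,x_k,u)\bigr)\ge 0$ there, and pass to the limit. The differences are purely expository: you re-derive the viscosity inequality from the classical solution via first/second order conditions rather than simply invoking that $W_{\delta,n_k}$ is a viscosity solution, you spell out the small/large-jump split and dominated-convergence step that the paper compresses into ``Letting $\ell\to\infty$,'' and you are correspondingly terser where the paper gives its explicit finite-cover argument for localizing the approximating maximizers.
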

\begin{proof}
We will only present the proof for $\widetilde{W}_{\delta}$ as the proof for $\widetilde{W}^{\delta}$ is similar. Suppose that $\widetilde{W}_{\delta}-\varphi$ has a maximum (equal to $0$) over $\overline{Q^{0}}$ at some $(t_{0},x_{0})\in Q_{\delta}$, for a test function $\varphi\in C_{b}^{1,2}(\overline{Q^{0}})$. By appropriate approximation and modification of $\varphi$, we can assume, without loss of generality, that the maximum is strict and
\begin{align*}
\sup_{(t,x)\in\partial_{\text{np}}Q_{\delta}}\left(\widetilde{W}_{\delta}(t,x)-\varphi(t,x)\right)=\sup_{(t,x)\in\partial_{\text{np}}Q_\delta}\left(\Psi(t,x)-\varphi(t,x)\right)\leq c_{0}<0,
\end{align*}
for some constant $c_{0}<0$. Hence, there exists a modulus of continuity $\varpi_{1}$ such that, for any $\varepsilon>0$,
\begin{align*}
\sup_{(t,x)\in B_{\varepsilon}^{c}(t_{0},x_{0})\cap \overline{Q^{0}}}\left(\widetilde{W}_{\delta}(t,x)-\varphi(t,x)\right)\leq -\varpi_{1}(\varepsilon)<0.
\end{align*}

Next, for any $(t,x)\in \overline{Q_{\delta}}$, by the definition of $\widetilde{W}_{\delta}$, there exists $k_{0}(t,x):=k_{0}(t,x;\varepsilon)\in\mathbb{N}$, such that for any $n\geq k_{0}(t,x)$,
\begin{align*}
\sup_{\substack{s\in[0,T],\,|s-t|\leq 1/k_{0}(t,x) \\ |y-x|\leq 1/k_{0}(t,x)}}W_{\delta,n}(s,y)-\widetilde{W}_{\delta}(t,x)<\frac{\varpi_{1}(\varepsilon)}{4}.
\end{align*}
Since $\varphi\in C^{1,2}(\overline{Q^{0}})$, $\varphi$ is uniformly continuous in $[0,T]\times\overline{O}_{1}$ with a modulus of continuity $\varpi_{2}$. Hence, there exists $\eta_{0}:=\eta_{0}(\varepsilon)>0$ such that, for any $(t,x)\in\overline{Q}_{\delta}\setminus B_{\varepsilon}(t_{0},x_{0})$ and any $n\geq k_{0}(t,x)$,
\begin{align*}
&\sup_{\substack{s\in[0,T],\,|s-t|\leq(1/k_{0}(t,x))\wedge\eta_{0} \\ |y-x|\leq(1/k_{0}(t,x))\wedge\eta_{0}}}\left(W_{\delta,n}(s,y)-\varphi(s,y)\right)\\
&\quad\leq\sup_{\substack{s\in[0,T],\,|s-t|\leq(1/k_{0}(t,x))\wedge\eta_{0} \\ |y-x|\leq(1/k_{0}(t,x))\wedge\eta_{0}}}\left(W_{\delta,n}(s,y)-\widetilde{W}_{\delta}(t,x)+\widetilde{W}_{\delta}(t,x)-\varphi(t,x)+\varphi(t,x)-\varphi(s,y)\right)\\
&\quad\leq\frac{\varpi_{1}(\varepsilon)}{4}-\varpi_{1}(\varepsilon)+\varpi_{2}(\eta_{0})\leq\frac{\varpi_{1}(\varepsilon)}{4}-\varpi_{1}(\varepsilon)+\frac{\varpi_{1}(\varepsilon)}{4}=-\frac{\varpi_{1}(\varepsilon)}{2}.
\end{align*}
Since $\overline{Q}_{\delta}\setminus B_{\varepsilon}( t_{0},x_{0})$ is a compact set, and since $\{B_{(1/k(t,x))\wedge\eta_{0}}(t,x)\}_{(t,x)\in\overline{Q}_{\delta}\setminus B_{\varepsilon}(t_{0},x_{0})}$ is a cover of $\overline{Q}_{\delta}\setminus B_{\varepsilon}(t_{0},x_{0})$, there exist $N=N(\varepsilon)\in\mathbb{N}$ and $(s_{i},y_{i})=(s_{i}(\varepsilon),y_{i}(\varepsilon))\in\overline{Q}_{\delta}\setminus B_{\varepsilon}(t_{0},x_{0})$, $i=1,\ldots,N$, such that $\{B_{(1/k(s_{i},y_{i}))\wedge\eta_{0}}(s_{i},y_{i})\}_{i=1}^N$ is a finite cover of $\overline{Q}_{\delta}\setminus B_{\varepsilon}(t_{0},x_{0})$. Hence, for any $n\geq \max_{1\leq i\leq N}k(s_{i},y_{i})$ and any $(t,x)\in\overline{Q}_{\delta}\setminus B_{\varepsilon}(t_{0},x_{0})$,
\begin{equation*}
W_{\delta,n}(t,x)-\varphi(t,x)\leq -\frac{\varpi_{1}(\varepsilon)}{2}.
\end{equation*}

Finally, by the definition of $\widetilde{W}_{\delta}$, for any positive sequence $\{\varepsilon_{\ell}\}_{\ell\in\mathbb{N}}$ with $\varepsilon_{\ell}\downarrow 0$, as $\ell\rightarrow\infty$, there exists $(t_{\ell},x_{\ell})\in\overline{Q}_{\delta}\cap B_{\varepsilon_{\ell}}(t_{0},x_{0})$ and $n_{\ell}\geq\max_{1\leq i\leq N(\varepsilon_{\ell})}k(s_{i}(\varepsilon_{\ell}),y_{i}(\varepsilon_{\ell}))$, where $n_{\ell}\uparrow\infty$ as $\ell\rightarrow\infty$, such that
\begin{align*}
W_{\delta,n_{\ell}}(t_{\ell},x_{\ell})-\varphi(t_{\ell},x_{\ell})=\max_{Q^{0}}\left(W_{\delta,n_{\ell}}(t,x)-\varphi(t,x)\right)> -\frac{\varpi_{1}(\varepsilon_{\ell})}{2}.
\end{align*}
Therefore, we have
\begin{align*}
\inf_{u\in\mathcal{U}}\left(\mathscr{A}_{n_{\ell}}^{u}\varphi(t_{\ell},x_{\ell})+\Gamma_{n_{\ell}}(t_{\ell},x_{\ell},u)\right)\geq 0.
\end{align*}
Letting $\ell\rightarrow\infty$, we have
\begin{align*}
\inf_{u\in\mathcal{U}}\left(\mathscr{A}^{u}\varphi(t_{0},x_{0})+\Gamma(t_{0},x_{0},u)\right)\geq 0,
\end{align*}
which completes the proof of the lemma.
\end{proof}

\begin{lemma}\label{lem:ConvWdeltanWdelta}
Let $\mathcal{U}$ be a finite set, and let Assumptions \ref{assump:SDECoefs}, \ref{assump:GammaPsi}, \ref{assump:Domain O}, and \ref{assump:ellpiticity along boundary} be valid. Let $\{b_{n}\}_{n\in\mathbb{N}}$, $\{\sigma_{n}\}_{n\in\mathbb{N}}$, $\{\gamma_{n}\}_{n\in\mathbb{N}}$, and $\{\Gamma_{n}\}_{n\in\mathbb{N}}$ be the sequences satisfying Assumption \ref{assump:CondsApproxFunts}, and let $\Psi\in C^{1+\alpha/2,\,2+\alpha}(\overline{Q^{0}})$ for some $\alpha>0$. Then, both $\eqref{eq:HJBDeltan}$ and $\eqref{eq:HJBDelta}$ have the unique viscosity solutions $W_{\delta,n}$ and $W_{\delta}$, respectively, and $\|W_{\delta,n}-W_{\delta}\|_{L^{\infty}(\overline{Q^{0}})}\rightarrow 0$, as $n\rightarrow\infty$.
\end{lemma}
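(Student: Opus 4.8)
The existence and uniqueness of $W_{\delta,n}$ is already Theorem~\ref{thm:uer}, and the existence and uniqueness of $W_\delta$ was recalled right after \eqref{eq:HJBDelta}; so the real content is the convergence, which I would obtain by the half-relaxed limit method of Barles--Perthame combined with the comparison principle for \eqref{eq:HJBDelta}. First I would record a uniform $L^\infty$ bound. Since $\|\Gamma_n\|_{L^\infty(\overline{Q^0}\times\mathcal U)}\le\widetilde C$ by Assumption~\ref{assump:CondsApproxFunts}-(i), the affine functions $\phi^\pm(t,x):=\pm\big(\|\Psi\|_{L^\infty(\overline{Q^0})}+\widetilde C(T-t)\big)$ satisfy $\mathscr A_n^u\phi^\pm+\Gamma_n=\mp\widetilde C+\Gamma_n$, so $\phi^+$ is a classical supersolution and $\phi^-$ a classical subsolution of \eqref{eq:HJBDeltan} with $\phi^-\le\Psi\le\phi^+$ on $\partial_{\rm np}Q_\delta$; the comparison principle for \eqref{eq:HJBDeltan} (available because $b_n,\sigma_n,\gamma_n,\Gamma_n$ satisfy Assumption~\ref{assump:CondsApproxFunts}-(i)) then gives $\|W_{\delta,n}\|_{L^\infty(\overline{Q^0})}\le\|\Psi\|_{L^\infty(\overline{Q^0})}+\widetilde C T$ uniformly in $n$. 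Consequently the half-relaxed limits $\widetilde W_\delta,\widetilde W^\delta$ introduced just before Lemma~\ref{lem:VisSubSuperSolsWDelta} are finite, bounded, respectively upper and lower semicontinuous on $\overline{Q^0}$, and satisfy $\widetilde W^\delta\le\widetilde W_\delta$ there.

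By Lemma~\ref{lem:VisSubSuperSolsWDelta}, $\widetilde W_\delta$ is a viscosity subsolution and $\widetilde W^\delta$ a viscosity supersolution of \eqref{eq:HJBDelta} in $Q_\delta$. To close the loop with comparison I need them correctly ordered against $\Psi$ on the parabolic boundary $\partial_{\rm np}Q_\delta$. On $[0,T)\times O_\delta^c$ and on $\{T\}\times\mathbb R^d$ every $W_{\delta,n}$ equals $\Psi$, so the only delicate part is at $[0,T)\times\partial O_\delta$, where one cannot simply pass to the limit in the boundary condition because the relaxed limits sample $W_{\delta,n}$ at nearby \emph{interior} points. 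This is where Assumptions~\ref{assump:Domain O} and \ref{assump:ellpiticity along boundary} enter, through Theorem~\ref{thm:proxdomain} and Assumption~\ref{assump:CondsApproxFunts}-(iii): the barrier functions constructed in Section~\ref{sec:VisConstrn} give, for every $(t_0,x_0)\in\partial_{\rm np}Q_\delta$ and every $\varepsilon>0$, a radius $r=r(t_0,x_0,\varepsilon)>0$ \emph{independent of $n$} such that $|W_{\delta,n}(s,y)-\Psi(t_0,x_0)|<\varepsilon$ for all $(s,y)\in\overline{Q^0}$ with $|s-t_0|+|y-x_0|<r$ and all large $n$ (sandwich $W_{\delta,n}$ between $\Psi(t_0,x_0)\pm\varepsilon$ plus a local barrier, using comparison for \eqref{eq:HJBDeltan}). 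Letting $\varepsilon\downarrow 0$ yields $\widetilde W_\delta=\widetilde W^\delta=\Psi$ on $\partial_{\rm np}Q_\delta$.

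Now I would apply the comparison principle for \eqref{eq:HJBDelta}, valid under Assumptions~\ref{assump:SDECoefs}--\ref{assump:GammaPsi} as recalled after \eqref{eq:HJBDelta}, to the subsolution $\widetilde W_\delta$ and the supersolution $\widetilde W^\delta$, which satisfy $\widetilde W_\delta\le\Psi\le\widetilde W^\delta$ on $\partial_{\rm np}Q_\delta$: this gives $\widetilde W_\delta\le\widetilde W^\delta$ in $Q_\delta$, hence, combined with the reverse inequality, $\widetilde W_\delta=\widetilde W^\delta$ on all of $\overline{Q^0}$. The common value is therefore continuous, solves \eqref{eq:HJBDelta} with boundary data $\Psi$, and by uniqueness equals $W_\delta$. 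Since $W_{\delta,n}$ and $W_\delta$ all coincide with $\Psi$ on $\overline{Q^0}\setminus Q_\delta$, it remains to prove uniform convergence on the compact set $\overline{Q_\delta}=[0,T]\times\overline{O_\delta}$, and there the coincidence of the two half-relaxed limits with the continuous function $W_\delta$ forces it by the standard compactness argument: if $|W_{\delta,n_k}(t_k,x_k)-W_\delta(t_k,x_k)|\ge\varepsilon_0$ along a subsequence with $(t_k,x_k)\to(t_*,x_*)\in\overline{Q_\delta}$, then $\limsup_k W_{\delta,n_k}(t_k,x_k)\le\widetilde W_\delta(t_*,x_*)=W_\delta(t_*,x_*)$ and $\liminf_k W_{\delta,n_k}(t_k,x_k)\ge\widetilde W^\delta(t_*,x_*)=W_\delta(t_*,x_*)$, together with continuity of $W_\delta$ contradicting the lower bound.

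The main obstacle is the uniform-up-to-the-boundary estimate of the second paragraph: it is the only place where the geometry of $O$ is used, and it rests on the barrier constructions of Section~\ref{sec:VisConstrn} together with the non-degeneracy carried over to $\sigma_n$ in Assumption~\ref{assump:CondsApproxFunts}-(iii), rather than on soft stability arguments. The interior half-relaxed-limit step (Lemma~\ref{lem:VisSubSuperSolsWDelta}) and the final comparison and compactness steps are routine once those barriers are in place.
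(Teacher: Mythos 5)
Your proposal is correct and follows essentially the same route as the paper: half-relaxed limits, Lemma~\ref{lem:VisSubSuperSolsWDelta} for the interior sub/supersolution property, barrier-based boundary control uniform in $n$, and then comparison for \eqref{eq:HJBDelta} plus the standard compactness step. The only cosmetic difference is that you argue locally at each boundary point with barriers, whereas the paper invokes Lemma~\ref{lem:barfun1} directly to produce \emph{global} continuous sub/supersolutions $\psi_\delta\le W_{\delta,n}\le\psi^\delta$ (valid for all $n$, coinciding with $\Psi$ on $\partial_{\rm np}Q_\delta$), which simultaneously supplies the uniform $L^\infty$ bound and the boundary pinning of both half-relaxed limits without a separate affine-supersolution step.
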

\begin{proof}
By Lemma \ref{lem:barfun1} there exist functions $\psi_{\delta}$ and $\psi^{\delta}$ which are respectively a viscosity subsolution and a viscosity supersolution to \eqref{eq:HJBDeltan} and $\psi_{\delta}=\psi^{\delta}=\Psi$ in $\partial_{{\rm np}}Q_\delta$. We have $\psi_{\delta}\leq W_{\delta,n}\leq\psi^{\delta}$, for any $n\in\mathbb{N}$, by the comparison principle. Next, since $\psi_{\delta},\psi^{\delta}\in C(\overline{Q^{0}})$, it follows that $\psi_{\delta}\leq\widetilde{W}_{\delta}\leq\psi^{\delta}$ and
$\psi_{\delta}\leq\widetilde{W}^{\delta}\leq\psi^{\delta}$. By Lemma \ref{lem:VisSubSuperSolsWDelta} and the comparison principle, we have $\widetilde{W}_{\delta}\leq\widetilde{W}^{\delta}$. By the definitions of $\widetilde{W}_{\delta}$ and $\widetilde{W}^{\delta}$, we also have $\widetilde{W}_{\delta}\geq\widetilde{W}^{\delta}$. Hence, we obtain $\widetilde{W}^{\delta}=\widetilde{W}_{\delta}=W_{\delta}\in C(\overline{Q^{0}})$.

It is now standard to notice that $\|W_{\delta,n}-W_{\delta}\|_{L^{\infty}(\overline{Q^{0}})}\rightarrow 0$, as $n\rightarrow\infty$. Otherwise, there would exist an $\varepsilon_{0}>0$, $\{n_{k}\}_{k\in\mathbb{N}}\subset\mathbb{N}$ with $n_{k}\uparrow\infty$, as $k\rightarrow\infty$, and $\{(t_{k},x_{k})\}_{k\in\mathbb{N}}\subset Q_{\delta}$, such that
\begin{align*}
\left|W_{\delta,n_{k}}(t_{k},x_{k})-W_{\delta}(t_{k},x_{k})\right|>\varepsilon_{0}.
\end{align*}
Without loss of generality, we can assume that there exists $(t_{0},x_{0})\in\overline{Q}_{\delta}$, such that $(t_{k},x_{k})\rightarrow (t_{0},x_{0})$, as $k\rightarrow\infty$. Letting $k\rightarrow\infty$, we have either $\widetilde{W}_{\delta}(t_{0},x_{0})-W_{\delta}(t_{0},x_{0})\geq\varepsilon_{0}$ or $\widetilde{W}^{\delta}(t_{0},x_{0})-W_{\delta}(t_{0},x_{0})\leq -\varepsilon_{0}$, which contradicts with the fact that $\widetilde{W}_{\delta}=\widetilde{W}^{\delta}=W_{\delta}$ in $\overline{Q^{0}}$.
\end{proof}

The next result provides a representation formula for $W_{\delta}$ with a finite control set.
\begin{theorem}\label{thm:DPWdeltaFin}
Let $\mathcal{U}$ be a finite set, and let Assumptions \ref{assump:SDECoefs}, \ref{assump:GammaPsi}, \ref{assump:Domain O}, and \ref{assump:ellpiticity along boundary} be valid. Let $\Psi\in C^{1+\alpha/2,\,2+\alpha}(\overline{Q^{0}})$ for some small $\alpha>0$. For each $t\in[0,T]$, let $\mu_{1}=(\Omega,\mathscr{F},\mathscr{F}_{s}^{t},\mathbb{P},\mathcal{W},\widetilde{\mathcal{W}},\mathcal{L})$ be an extended generalized reference probability space and set $\mu=(\Omega,\mathscr{F},\mathscr{F}_{s}^{t},\mathbb{P},\mathcal{W},\mathcal{L})$. Then, for any $x\in \overline O$,
\begin{align}\label{eq:DPWdeltaFin}
W_{\delta}(t,x)=\inf_{U\in\mathcal{A}_{\mu}}\mathbb{E}\left(\int_{t}^{\tau(t,x)}\Gamma\left(s,X(s;t,x),U(s)\right)ds+W_{\delta}(\tau(t,x),X(\tau;t,x))\right).
\end{align}
\end{theorem}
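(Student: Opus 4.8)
The strategy is to let $n\to\infty$ in the representation formula of Theorem~\ref{thm:wdeltan} for $W_{\delta,n}$, exploiting the uniform convergence $\|W_{\delta,n}-W_\delta\|_{L^\infty(\overline{Q^0})}\to0$ from Lemma~\ref{lem:ConvWdeltanWdelta}. Since, by Remark~\ref{rem:EquiProbSpaces}, $\mathcal{A}_{\mu_1}=\mathcal{A}_{\mu}$ and the process $X=X(\cdot;t,x)$ solving \eqref{eq:MainSDEs} is defined on $\mu_1$ as well, it suffices to prove that, uniformly over $U\in\mathcal{A}_\mu$,
\begin{align*}
&\left|\,\mathbb{E}\!\left(\int_t^{\tau\wedge\tau_{\delta,n}}\!\!\Gamma_n(s,X_n(s),U(s))\,ds+W_{\delta,n}\!\left(\tau\wedge\tau_{\delta,n},X_n(\tau\wedge\tau_{\delta,n})\right)\right)\right.\\
&\qquad\qquad\left.-\,\mathbb{E}\!\left(\int_t^{\tau}\!\Gamma(s,X(s),U(s))\,ds+W_{\delta}\!\left(\tau,X(\tau)\right)\right)\right|=o_{1/n}(1),
\end{align*}
where $X_n=X_n(\cdot;t,x)$; taking the infimum over $U$ and invoking Theorem~\ref{thm:wdeltan} and Lemma~\ref{lem:ConvWdeltanWdelta} then yields \eqref{eq:DPWdeltaFin}. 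I would bound the left-hand side separately on the event $\mathcal{S}_{\delta,n}^{c}$ and on $\mathcal{S}_{\delta,n}$ from Lemma~\ref{lem:ProbSndelta}.

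On $\mathcal{S}_{\delta,n}^{c}$ the stopping times coincide, $\tau\wedge\tau_{\delta,n}=\tau$, by \eqref{eq:tautaudeltanSdeltan}. For the running costs I recast the $X_n$-equation on $\mu_1$ in the form \eqref{eq:CompareSDEs}, with the $(m_1+d)$-dimensional driving Brownian motion $(\mathcal{W},\widetilde{\mathcal{W}})$ and diffusion matrix $[\,\sigma_n\ \ \sqrt{\epsilon_n}\,I\,]$, compared against $[\,\sigma\ \ 0\,]$ for $X$. Then Lemma~\ref{lem:CompStrSols} gives $\mathbb{E}(\sup_{s\in[t,T]}|X_n(s)-X(s)|^2)=o_{1/n}(1)$ uniformly in $U$ (using $\|\sigma-\sigma_n\|_{L^\infty}\to0$ and $\epsilon_n\to0$), and Lemma~\ref{lem:CompCostFunt} with $\widetilde{\Gamma}=\Gamma_n$ gives $\mathbb{E}\int_t^T|\Gamma(s,X(s),U(s))-\Gamma_n(s,X_n(s),U(s))|\,ds=o_{1/n}(1)$ uniformly in $U$ (using Assumption~\ref{assump:CondsApproxFunts}-(i)\,$\&$\,(ii), and that $\epsilon_n$ may be taken small enough that $\|D_x\Gamma_n\|_{L^\infty}\sqrt{\epsilon_n}\to0$). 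For the terminal terms I write
\begin{align*}
\left|W_{\delta,n}(\tau,X_n(\tau))-W_{\delta}(\tau,X(\tau))\right|\le\|W_{\delta,n}-W_{\delta}\|_{L^\infty(\overline{Q^0})}+\varpi_{W_\delta}\!\left(\sup_{s\in[t,T]}|X_n(s)-X(s)|\right),
\end{align*}
where $\varpi_{W_\delta}$ is a modulus of continuity of $W_\delta$, which exists since $W_\delta\in C_b(\overline{Q^0})$ is bounded and coincides with the (uniformly continuous) function $\Psi$ outside the bounded cylinder $Q_\delta$; taking expectations and using Lemma~\ref{lem:ConvWdeltanWdelta}, the $L^2$-estimate above, Chebyshev's inequality and boundedness of $\varpi_{W_\delta}$ makes this term $o_{1/n}(1)$ uniformly in $U$. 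On $\mathcal{S}_{\delta,n}$, all of $\Gamma$, $\Gamma_n$, $W_\delta$, $W_{\delta,n}$ are bounded uniformly in $n$ (the last two by Lemma~\ref{lem:ConvWdeltanWdelta}), so that contribution is $O(\mathbb{P}(\mathcal{S}_{\delta,n}))=o_{1/n}(1)$ uniformly in $U$, since the proof of \eqref{eq:ProbSdeltan}, resting on Lemma~\ref{lem:CompStrSols} and Chebyshev's inequality, is uniform in $U$.

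Assembling the two cases gives the displayed uniform estimate, whence the theorem. I expect the main obstacle to be the mismatch between the stopping time $\tau\wedge\tau_{\delta,n}$ appearing in Theorem~\ref{thm:wdeltan} --- the exit time of the perturbed process $X_n$ from the enlarged domain $O_{\delta/2}$ --- and the target exit time $\tau$ of $X$ from $O$, aggravated by the jumps of $X$, which may send $X(\tau)$ far outside $O$: the event $\mathcal{S}_{\delta,n}$ together with Lemma~\ref{lem:ProbSndelta} is precisely the device that reconciles them. A second delicate point is to keep every estimate uniform in the control $U$, so that passing to the infimum over $U$ is legitimate.
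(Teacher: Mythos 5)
Your proof is correct and follows essentially the same strategy as the paper: start from the representation of $W_{\delta,n}$ in Theorem~\ref{thm:wdeltan}, bound the discrepancy between the $n$-perturbed and unperturbed running/terminal costs using Lemmas~\ref{lem:CompStrSols}, \ref{lem:CompCostFunt}, \ref{lem:ProbSndelta}, and \ref{lem:ConvWdeltanWdelta}, and pass to the limit $n\to\infty$ uniformly in $U$. Two minor implementation differences: you make explicit the recasting of $X_n$ as a solution to an equation of the form~\eqref{eq:CompareSDEs} driven by the augmented $(m_1+d)$-dimensional Brownian motion $(\mathcal{W},\widetilde{\mathcal{W}})$ with diffusion block $[\,\sigma_n\ \sqrt{\epsilon_n}\,I\,]$, and note that $\epsilon_n$ can be taken small enough that $\|D_x\Gamma_n\|_{L^\infty}\sqrt{\epsilon_n}\to 0$ — a point the paper leaves implicit; and you control the modulus-of-continuity term by Chebyshev plus boundedness rather than by the paper's concave modulus and Jensen's inequality, which is an equivalent route.
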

\begin{proof}
Let $\{b_{n}\}_{n\in\mathbb{N}}$, $\{\sigma_{n}\}_{n\in\mathbb{N}}$, $\{\gamma_{n}\}_{n\in\mathbb{N}}$, and $\{\Gamma_{n}\}_{n\in\mathbb{N}}$ be sequences of functions satisfying Assumption \ref{assump:CondsApproxFunts}. By Theorem \ref{thm:DPSmooth}, we have
\begin{align} W_{\delta,n}(t,x)&=\inf_{U\in\mathcal{A}_{\mu}}\mathbb{E}\left(\int_{t}^{\tau(t,x)\wedge\tau_{\delta,n}(t,x)}\Gamma_{n}\left(s,X_{n}(s;t,x),U(s)\right)ds\right.\nonumber\\
\label{eq:DPWdeltan} &\qquad\qquad\quad\,\,\,+W_{\delta,n}\left(\tau(t,x)\wedge\tau_{\delta,n}(t,x),X_{n}(\tau(t,x)\wedge\tau_{\delta,n}(t,x);t,x)\right)\bigg).
\end{align}
Notice that, by Assumption \ref{assump:CondsApproxFunts}-(i) and the construction of the HJB equation \eqref{eq:HJBDeltan}, there exists a constant $K>0$, independent of $n$, such that
\begin{align*}
\sup_{n\in\mathbb{N}}\left\|\Gamma_{n}\right\|_{L^{\infty}(\overline{Q_{\delta}}\times\mathcal{U})}+\sup_{n\in\mathbb{N}}\left\|W_{\delta,n}\right\|_{L^{\infty}(\overline{Q^{0}})}\leq K.
\end{align*}
By Lemma \ref{lem:CompCostFunt} and Lemma \ref{lem:ProbSndelta}, for any $ U\in\mathcal{A}_{\mu}$,
\begin{align*}
&\mathbb{E}\left(\left|\int_{t}^{\tau\wedge\tau_{\delta,n}}\Gamma_{n}\left(s,X_{n}(s),U(s)\right)ds-\int_{t}^{\tau}\Gamma\left(s,X(s),U(s)\right)ds\right|\right)\\
&\quad\leq\mathbb{E}\left(\int_{t}^{\tau\wedge\tau_{\delta,n}}\left|\Gamma_{n}\left(s,X_{n}(s),U(s)\right)-\Gamma\left(s,X(s),U(s)\right)\right|ds+\int_{\tau\wedge\tau_{\delta,n}}^{\tau}\left|\Gamma\left(s,X(s),U(s)\right)\right|ds\right)\\
&\quad\leq (T-t)\left[\left\|\Gamma_{n}-\Gamma\right\|_{L^{\infty}(\overline{Q^{0}}\times\mathcal{U})}+K_{4}\left\|D_{x}\Gamma_{n}\right\|_{L^{\infty}(\overline{Q^{0}}\times\mathcal{U})}\left(\left\|b-b_{n}\right\|_{L^{\infty}(\overline{Q^{0}}\times\mathcal{U})}+\left\|\sigma-\sigma_{n}\right\|_{L^{\infty}(\overline{Q^{0}}\times\mathcal{U})}\right)\right]\\
&\qquad +K_{4}(T-t)\left\|D_{x}\Gamma_{n}\right\|_{L^{\infty}(\overline{Q^{0}}\times\mathcal{U})}\left(\int_{\mathbb{R}^{m_{2}}_{0}}\left\|\gamma(\cdot,\cdot,\cdot,z)-\gamma_n(\cdot,\cdot,\cdot,z)\right\|_{L^{\infty}(\overline{Q^{0}}\times\mathcal{U})}^{2}\nu(dz)\right)^{1/2}\\
&\qquad +\mathbb{E}\left({\bf 1}_{\mathcal{S}_{\delta,n}}\int_{\tau\wedge\tau_{\delta,n}}^{\tau}\left|\Gamma\left(s,X(s),U(s)\right)\right|ds\right)\\
&\quad\leq o_{1/n}(1).
\end{align*}
Similarly, by Lemma \ref{lem:ProbSndelta} and Lemma \ref{lem:ConvWdeltanWdelta}, we also have
\begin{align*}
&\mathbb{E}\left(\left|W_{\delta,n}\left(\tau\wedge\tau_{\delta,n},X_{n}(\tau\wedge\tau_{\delta,n}\right)-W_{\delta}\left(\tau,X(\tau)\right)\right|\right)\\
&\quad\leq\mathbb{E}\left(\left|W_{\delta,n}\left(\tau\wedge\tau_{\delta,n},X_{n}(\tau\wedge\tau_{\delta,n})\right)-W_{\delta}\left(\tau\wedge\tau_{\delta,n},X_{n}(\tau\wedge\tau_{\delta,n})\right)\right|\right)\\
&\qquad\,+\mathbb{E}\left(\left|W_{\delta}\left(\tau\wedge\tau_{\delta,n},X_{n}(\tau\wedge\tau_{\delta,n})\right)-W_{\delta}\left(\tau,X(\tau)\right)\right|\right)\\
&\quad\leq\left\|W_{\delta,n}-W_{\delta}\right\|_{L^{\infty}(\overline{Q^{0}})}+\mathbb{E}\left({\bf 1}_{\mathcal{S}_{\delta,n}^{c}}\left|W_{\delta}\left(\tau,X_{n}(\tau)\right)-W_{\delta}\left(\tau,X(\tau)\right)\right|\right)+2\|W_{\delta}\|_{L^{\infty}(\overline{Q^{0}})}\mathbb{P}(\mathcal{S}_{\delta,n})\\
&\quad\leq o_{1/n}(1)+\mathbb{E}\left(\left|W_{\delta}\left(\tau,X_{n}(\tau)\right)-W_{\delta}\left(\tau,X(\tau)\right)\right|\right)\\
&\quad\leq o_{1/n}(1)+\mathbb{E}\left(\varpi_{\delta}\left(|X_n(\tau)-X(\tau)|\right)\right),
\end{align*}
where $\varpi_{\delta}$ is a (concave) modulus of continuity of $W_\delta$ in $\overline{Q^0}$. By Jensen's inequality and Lemma \ref{lem:CompStrSols}, we thus obtain
\begin{equation*}
\mathbb{E}\left(\left|W_{\delta,n}\left(\tau\wedge\tau_{\delta,n},X_{n}(\tau\wedge\tau_{\delta,n}\right)-W_{\delta}\left(\tau,X(\tau)\right)\right|\right)\leq o_{1/n}(1).
\end{equation*}
Therefore, \eqref{eq:DPWdeltaFin} follows immediately by letting $n\rightarrow\infty$ in \eqref{eq:DPWdeltan}.
\end{proof}
\begin{remark}\label{rem:ImprovDPFinite}
By almost the same arguments we can prove, under the assumptions of Theorem \ref{thm:DPWdeltaFin}, the following version of the Dynamic Programming Principle
\begin{equation*}
W_\delta(t,x)=\inf_{(U,\theta_{U})\in\widetilde{\mathcal{A}}_{\mu}}\mathbb{E}\left(\int_{t}^{\theta_{U}\wedge\tau}\Gamma\left(s,X(s;t,x),U(s)\right)ds+W_{\delta}\left(\theta_{U}\wedge\tau,X(\theta_{U}\wedge\tau;t,x)\right)\right),
\end{equation*}
for any generalized reference probability space $\mu=(\Omega,\mathscr{F},\mathscr{F}_{s}^{t},\mathbb{P},\mathcal{W},\mathcal{L})$ which comes from an extended generalized reference probability space $\mu_{1}=(\Omega,\mathscr{F},\mathscr{F}_{s}^{t},\mathbb{P},\mathcal{W},\widetilde{\mathcal{W}},\mathcal{L})$, and any $(t,x)\in\overline{Q}$.
\end{remark}

\subsection{General Control Sets}\label{Subsec:DPVisSolGenCtrl}

In this subsection, we consider the general control space, i.e., $\mathcal{U}$ is a Polish space. Let $\{v_{i}\}_{i\in\mathbb{N}}$ be a countable dense subset of $\mathcal{U}$. For each $n\in\mathbb{N}$, let $\mathcal{U}_{n}:=\{v_{1},\ldots,v_{n}\}$, and for each extended generalized reference probability space $\mu_{1}=(\Omega,\mathscr{F},\mathscr{F}_{s}^{t},\mathbb{P},\mathcal{W},\widetilde{\mathcal{W}},\mathcal{L})$, we set $\mu=(\Omega,\mathscr{F},\mathscr{F}_{s}^{t},\mathbb{P},\mathcal{W},\mathcal{L})$, and let $\mathcal{A}_{\mu}^{n}$ be the collection of all $\mathscr{F}_{s}^{t}$-predictable $\mathcal{U}_{n}$-valued processes on $[t,T]$. For any $U_{n}\in\mathcal{A}_{\mu}^{n}$ and any $x\in\mathbb{R}^{d}$, we denote by $\overline{X}_{n}(s;t,x)$ the unique strong c\`{a}dl\`{a}g solution to
\begin{align}
\overline{X}_{n}(s;t,x)&=x+\int_{t}^{s}b\left(r,\overline{X}_{n}(r;t,x),U_{n}(r)\right)dr+\int_{t}^{s}\sigma\left(r,\overline{X}_{n}(r;t,x),U_{n}(s)\right)d\mathcal{W}(s)\nonumber\\
\label{eq:MainSDEsUn} &\quad+\int_{t}^{s}\int_{\mathbb{R}_{0}^{m_{2}}}\gamma\left(r,\overline{X}_{n}(r-;t,x),U_{n}(s),z\right)\widetilde{N}(dr,dz),\quad s\in[t,T].
\end{align}
For any $\delta\in(0,\eta/2)$ and $(t,x)\in Q^0$,
\begin{align*}
\overline{\tau}_{\delta,n}=\overline{\tau}_{\delta,n}(t,x):=\inf\left\{s\in[t,T]:\,\overline{X}_{n}(s;t,x)\not\in O_{\delta/2}\right\},
\end{align*}
with the convention $\inf\emptyset=T$.
\begin{lemma}\label{lem:ApproControl}
Let Assumptions \ref{assump:SDECoefs} and \ref{assump:GammaPsi} be satisfied. Let
$\mu_{1}=(\Omega,\mathscr{F},\mathscr{F}_{s}^{t},\mathbb{P},\mathcal{W},\widetilde{\mathcal{W}},\mathcal{L})$ be an extended generalized reference probability space, and let $\mu=(\Omega,\mathscr{F},\mathscr{F}_{s}^{t},\mathbb{P},\mathcal{W},\mathcal{L})$. For any $U\in\mathcal{A}_{\mu}$, there exists a sequence of control processes $\{U_{n_{k}}\}_{k\in\mathbb{N}}$, where $U_{n_{k}}\in\mathcal{A}_{\mu}^{n_{k}}$ for each $k\in\mathbb{N}$, such that for any $x\in\mathbb{R}^{d}$,
\begin{align}\label{eq:ApproxGammaFinControl}
\mathbb{E}\left(\int_{t}^{T}\left|\Gamma\left(s,X(s;t,x),U_{n_{k}}(s)\right)-\Gamma\left(s,X(s;t,x),U(s)\right)\right|^{2}ds\right)=o_{1/k}(1),\quad k\rightarrow\infty.
\end{align}
Moreover,
\begin{align}\label{eq:ApproxSDESolsFinControl}
\mathbb{E}\left(\sup_{\ell\in[t,T]}\left|X(\ell;t,x)-\overline{X}_{n_{k}}(\ell;t,x)\right|^{2}\right)=o_{1/k}(1),\quad k\rightarrow\infty.
\end{align}
\end{lemma}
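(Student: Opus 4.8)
The plan is to approximate the $\mathcal{U}$-valued control $U$ by its nearest-point projections onto the finite sets $\mathcal{U}_{n}$, and then to push this approximation through to the running cost and the state process using the uniform continuity and boundedness of the coefficients, together with dominated convergence and a Gronwall argument. Concretely, for each $n\in\mathbb{N}$ I would define $\pi_{n}:\mathcal{U}\to\mathcal{U}_{n}$ by $\pi_{n}(u):=v_{j(u)}$, where $j(u)$ is the smallest index in $\{1,\dots,n\}$ attaining $\min_{1\le i\le n}d_{\mathcal{U}}(v_{i},u)$. Each level set $\{u:\pi_{n}(u)=v_{i}\}$ is a finite intersection of sets of the form $\{d_{\mathcal{U}}(v_{i},\cdot)\le d_{\mathcal{U}}(v_{l},\cdot)\}$ and $\{d_{\mathcal{U}}(v_{i},\cdot)<d_{\mathcal{U}}(v_{l},\cdot)\}$, hence Borel, so $\pi_{n}$ is Borel measurable and $U_{n}:=\pi_{n}\circ U$ is an $\mathscr{F}_{s}^{t}$-predictable $\mathcal{U}_{n}$-valued process, i.e. $U_{n}\in\mathcal{A}_{\mu}^{n}$. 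Since $\{v_{i}\}_{i\in\mathbb{N}}$ is dense in $\mathcal{U}$, for every $(s,\omega)$ one has $d_{\mathcal{U}}(U_{n}(s,\omega),U(s,\omega))=\min_{1\le i\le n}d_{\mathcal{U}}(v_{i},U(s,\omega))\downarrow 0$ as $n\to\infty$.

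To obtain \eqref{eq:ApproxGammaFinControl}, I would use that by Assumption \ref{assump:GammaPsi}-(ii) the function $\Gamma$ admits a modulus of continuity $\varpi_{\Gamma}$ in the $\mathcal{U}$-variable, uniform in $(s,x)$, so that
\[
\bigl|\Gamma(s,X(s),U_{n}(s))-\Gamma(s,X(s),U(s))\bigr|^{2}\ \le\ \min\!\Bigl(4\|\Gamma\|_{L^{\infty}}^{2},\ \varpi_{\Gamma}\!\bigl(d_{\mathcal{U}}(U_{n}(s),U(s))\bigr)^{2}\Bigr)\ \longrightarrow\ 0
\]
pointwise in $(s,\omega)$, while the left-hand side is dominated by the constant $4\|\Gamma\|_{L^{\infty}}^{2}$, which is integrable over the finite measure space $[t,T]\times\Omega$. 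Dominated convergence then gives $\mathbb{E}\int_{t}^{T}|\Gamma(s,X(s),U_{n}(s))-\Gamma(s,X(s),U(s))|^{2}ds\to0$.

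For \eqref{eq:ApproxSDESolsFinControl}, I would set $\phi_{n}(\ell):=\mathbb{E}(\sup_{s\in[t,\ell]}|X(s;t,x)-\overline{X}_{n}(s;t,x)|^{2})$, which is finite by Theorem \ref{thm:StrSolsSDEs}. Subtracting \eqref{eq:MainSDEsUn} from \eqref{eq:MainSDEs}, applying the Cauchy--Schwarz and Burkholder--Davis--Gundy inequalities as in Lemmas \ref{lem:2MomentEst} and \ref{lem:CompStrSols}, and splitting each integrand as a difference of coefficients at $(\overline{X}_{n}(r),U_{n}(r))$ and $(X(r),U_{n}(r))$ plus a difference at $(X(r),U_{n}(r))$ and $(X(r),U(r))$ — using the $x$-Lipschitz bounds of Assumption \ref{assump:SDECoefs}-(iii) on the first differences and $X(r-)=X(r)$ for $dr$-a.e. $r$ — one arrives at a constant $A=A(C,T,M)>0$ with
\[
\phi_{n}(\ell)\ \le\ A\int_{t}^{\ell}\phi_{n}(r)\,dr\ +\ A\bigl(\varepsilon_{n}^{b}+\varepsilon_{n}^{\sigma}+\varepsilon_{n}^{\gamma}\bigr),\qquad \ell\in[t,T],
\]
where $\varepsilon_{n}^{b}:=\mathbb{E}\int_{t}^{T}|b(r,X(r),U_{n}(r))-b(r,X(r),U(r))|^{2}dr$, $\varepsilon_{n}^{\sigma}$ is its analogue for $\sigma$, and $\varepsilon_{n}^{\gamma}:=\mathbb{E}\int_{t}^{T}\!\int_{\mathbb{R}^{m_{2}}_{0}}|\gamma(r,X(r-),U_{n}(r),z)-\gamma(r,X(r-),U(r),z)|^{2}\nu(dz)\,dr$. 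Here $\varepsilon_{n}^{b},\varepsilon_{n}^{\sigma}\to0$ by the uniform continuity of $b,\sigma$ (Assumption \ref{assump:SDECoefs}-(ii)), their boundedness \eqref{eq:inftybounds}, and dominated convergence on $[t,T]\times\Omega$; and $\varepsilon_{n}^{\gamma}\to0$ because, by Assumption \ref{assump:SDECoefs}-(iii), the $z$-integrand is bounded both by $4C^{2}\rho^{2}(z)$ and by $C^{2}\rho^{2}(z)\,\varpi\!\bigl(d_{\mathcal{U}}(U_{n}(r),U(r))\bigr)^{2}$, hence tends to $0$ pointwise and is dominated by $4C^{2}\rho^{2}(z)$ with $\int_{\mathbb{R}^{m_{2}}_{0}}\rho^{2}\,d\nu=M<\infty$, so dominated convergence (first in $z$, then in $(r,\omega)$) applies. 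Gronwall's inequality then yields $\phi_{n}(T)\le A(\varepsilon_{n}^{b}+\varepsilon_{n}^{\sigma}+\varepsilon_{n}^{\gamma})e^{A(T-t)}\to0$. Choosing $n_{k}:=k$ and $U_{n_{k}}:=U_{k}$ (or passing to any subsequence) completes the argument.

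The only genuinely delicate point is the convergence $\varepsilon_{n}^{\gamma}\to0$: since neither the modulus $\varpi$ nor the metric $d_{\mathcal{U}}$ is assumed bounded, one cannot dominate the $z$-integrand by $\varpi(d_{\mathcal{U}}(U_{n}(r),U(r)))^{2}$ directly, and must instead combine the modulus-of-continuity estimate for $\gamma$ in the $u$-variable with the pointwise bound $\|\gamma(\cdot,\cdot,\cdot,z)\|_{L^{\infty}}\le C\rho(z)$ to produce the $\nu$-integrable majorant $4C^{2}\rho^{2}(z)$ before integrating in $(r,\omega)$. Everything else is a routine application of the strong well-posedness estimates already recorded in Subsection \ref{subsec:PreEst}, together with the measurability observation that makes $U_{n}$ an admissible control.
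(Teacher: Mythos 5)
Your proof is correct, and it takes a genuinely different route from the paper. You define $U_{n}$ as the nearest-point projection of $U$ onto $\mathcal{U}_{n}$, observe that $d_{\mathcal{U}}(U_{n},U)\downarrow 0$ pointwise on $[t,T]\times\Omega$ by density, and then drive everything by dominated convergence: for $\Gamma$, $b$, $\sigma$ you dominate by sup-norm bounds and use the uniform modulus of continuity to get pointwise convergence; for $\gamma$ you observe the crucial two-sided bound $\left|\gamma(r,X,U_{n},z)-\gamma(r,X,U,z)\right|^{2}\le\min\bigl(4C^{2}\rho^{2}(z),\,C^{2}\rho^{2}(z)\varpi(d_{\mathcal{U}}(U_{n},U))^{2}\bigr)$ and apply DCT in $z$ and then in $(r,\omega)$, before closing with Gronwall. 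The paper instead fixes $\delta_{k}>0$ making all coefficient differences $\le 1/k$ (times $\rho(z)$ for $\gamma$) when $d_{\mathcal{U}}(u_{1},u_{2})\le\delta_{k}$, covers $\mathcal{U}$ by $\delta_{k}$-balls around the $v_{i}$, picks $n_{k}$ so that an explicit exceptional set $A_{k}\subset[t,T]\times\Omega$ has $\mathrm{Leb}\otimes\mathbb{P}(A_{k})\le 1/k$, defines $U_{n_{k}}$ by a first-hit rule among these balls (with a default control on $A_{k}$), and then splits all integrals over $A_{k}$ and $A_{k}^{c}$. Both routes rest on the same two structural facts — uniform continuity of the coefficients in the control variable and the $\rho$-domination in the $\gamma$-term — but your projection-plus-DCT argument is cleaner and avoids the explicit choice of $n_{k}$ via measure estimates (indeed you can take $n_{k}=k$), at the price of being purely qualitative, whereas the paper's covering argument yields explicit $1/k$-type error bounds outside a set of measure $1/k$. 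Since the lemma only asserts $o_{1/k}(1)$, both are adequate. One minor point worth stating explicitly: the projection $\pi_{n}$ and hence $U_{n}$ depend only on $U$, not on $x$, so the single sequence $\{U_{n_{k}}\}$ serves uniformly for all $x\in\mathbb{R}^{d}$ as the lemma requires; your construction satisfies this, as does the paper's.
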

\begin{proof}
By Assumption \ref{assump:SDECoefs}-(ii)(iii) and Assumption \ref{assump:GammaPsi}-(ii), for each $k\in\mathbb{N}$, there exists $\delta_{k}>0$, such that for any $u_{1},u_{2}\in\mathcal{U}$ with $d_{\mathcal{U}}(u_{1},u_{2})\leq\delta_{k}$,
\begin{align}\label{eq:EstDiffbsigmaU}
\left|b(s,y,u_{1})-b(s,y,u_{2})\right|+\left\|\sigma(s,y,u_{1})-\sigma(s,y,u_{2})\right\|\leq\frac{1}{k},\\
\label{eq:EstDiffgammaU} \left|\gamma(s,y,u_{1},z)-\gamma(s,y,u_{2},z)\right|\leq\frac{\rho(z)}{k},\qquad\quad\,\,\,\,\\
\label{eq:EstDiffGammaU} \left|\Gamma(s,y,u_{1})-\Gamma(s,y,u_{2})\right|\leq\frac{1}{k},\qquad\qquad\quad\,
\end{align}
for any $(s,y)\in[t,T]\times\mathbb{R}^{d}$ and any $z\in\mathbb{R}^{m_{2}}_{0}$. Next, since $\{v_{i}\}_{i\in\mathbb{N}}$ is a countable dense subset of $\mathcal{U}$, clearly, $\mathcal{U}\subset\cup_{i\in\mathbb{N}}B_{\delta_{k}}(v_{i})$. It follows that, for any $U\in\mathcal{A}_{\mu}$, we have $[t,T]\times\Omega\subset\cup_{i\in\mathbb{N}}U^{-1}(B_{\delta_{k}}(v_{i}))$. Thus, there exists an increasing sequence of integers $\{n_{k}\}_{k\in\mathbb{N}}$, with $n_{k}\uparrow\infty$ as $k\rightarrow\infty$, such that, defining for each $k\in\mathbb{N}$, $A_{k}:=([t,T]\times\Omega)\setminus\bigcup_{i=1}^{n_{k}}U^{-1}(B_{\delta_{k}}(v_{i}))$, we have
\begin{align*}
\text{Leb}\otimes\mathbb{P}\left(A_{k}\right)\leq\frac{1}{k}.
\end{align*}
Fix any arbitrary element $u_{0}\in\mathcal{U}$. For each $k\in\mathbb{N}$, define the control policy $U_{n_{k}}$ via
\begin{align*}
U_{n_{k}}(s)(\omega)=\left\{\begin{array}{ll} v_{i} &\,\,\,\text{if }\,U(s)(\omega)\in B_{\delta_{k}}(v_{i})\setminus\left(\bigcup_{j=1}^{i-1}B_{\delta_{k}}(v_{j})\right),\quad i=1,\ldots,n_{k}, \\ u_{0} &\,\,\,\text{otherwise}. \end{array}\right.
\end{align*}
Clearly, for each $k\in\mathbb{N}$, $U_{n_{k}}\in\mathcal{A}_{\mu}^{n_{k}}$, and
\begin{align*}
d_{\mathcal{U}}\left(U_{n_{k}}(s)(\omega),U(s)(\omega)\right)\leq\delta_{k},\quad\text{for }\,(s,\omega)\in ([t,T]\times\Omega)\setminus A_{k}.
\end{align*}
Hence, by \eqref{eq:EstDiffGammaU},
\begin{align*}
&\mathbb{E}\left(\int_{t}^{T}\left|\Gamma\left(s,X(s;t,x),U_{n_{k}}(s)\right)-\Gamma\left(s,X(s;t,x),U(s)\right)\right|^{2}ds\right)\\
&\quad\leq\mathbb{E}\left(\int_{t}^{T}\left|\Gamma\left(s,X(s;t,x),U_{n_{k}}(s)\right)-\Gamma\left(s,X(s;t,x),U(s)\right)\right|^{2}{\bf 1}_{A_{k}}ds\right)\\
&\qquad\,+\mathbb{E}\left(\int_{t}^{T}\left|\Gamma\left(s,X(s;t,x),U_{n_{k}}(s)\right)-\Gamma\left(s,X(s;t,x),U(s)\right)\right|^{2}{\bf 1}_{A_{k}^{c}}ds\right)\\
&\quad\leq 4\left\|\Gamma\right\|_{L^{\infty}(\overline{Q^{0}}\times\mathcal{U})}^{2}\cdot\text{Leb}\otimes\mathbb{P}\left(A_{k}\right)+\frac{T}{k^{2}}\leq\frac{4}{k}\left\|\Gamma\right\|_{L^{\infty}(\overline{Q^{0}}\times\mathcal{U})}^{2}+\frac{T}{k^{2}}.
\end{align*}
Letting $k\rightarrow\infty$ in the last inequality above completes the proof of \eqref{eq:ApproxGammaFinControl}.

Moreover, for any $x\in\mathbb{R}^{d}$, $s\in[t,T]$, $k\in\mathbb{N}$, by Burkholder-Davis-Gundy and Cauchy-Schwarz inequalities, there exists a universal constant $\overline{\Lambda}_{1}>0$ such that, setting $X(s)=X(s;t,x), \overline{X}_{n_{k}}(s)=\overline{X}_{n_{k}}(s;t,x)$,
\begin{align*}
\mathbb{E}\!\left(\sup_{\ell\in[t,s]}\!\left|X(\ell)\!-\!\overline{X}_{n_{k}}(\ell)\right|^{2}\!\right)&\leq 3T\,\mathbb{E}\left(\int_{t}^{s}\left|b\left(r,X(r),U(r)\right)-b\left(r,\overline{X}_{n_{k}}(r),U_{n_{k}}(r)\right)\right|^{2}dr\right)\\
&\quad +3\overline{\Lambda}_{1}\,\mathbb{E}\left(\int_{t}^{s}\left\|\sigma\left(r,X(r),U(r)\right)-\sigma\left(r,\overline{X}_{n_{k}}(r),U_{n_{k}}(r)\right)\right\|^{2}dr\right)\\
&\quad +3\overline{\Lambda}_{1}\mathbb{E}\!\left(\int_{t}^{s}\!\!\int_{\mathbb{R}^{m_{2}}_{0}}\!\left|\gamma\!\left(r,\!X(r),\!U(r),\!z\right)\!-\!\gamma\!\left(r,\!\overline{X}_{n_{k}}\!(r),\!U_{n_{k}}\!(r),\!z\right)\right|^{2}\!\nu(dz)dr\!\right).
\end{align*}
By Assumption \ref{assump:SDECoefs}-(iii) and \eqref{eq:EstDiffbsigmaU},
\begin{align*}
&\mathbb{E}\left(\int_{t}^{s}\left|b\left(r,X(r),U(r)\right)-b\left(r,\overline{X}_{n_{k}}(r),U_{n_{k}}(r)\right)\right|^{2}dr\right)\\
&\quad\leq 3\,\mathbb{E}\left(\int_{t}^{s}\left|b\left(r,X(r),U(r)\right)-b\left(r,\overline{X}_{n_{k}}(r),U(r)\right)\right|^{2}dr\right)\\
&\qquad +3\,\mathbb{E}\left(\int_{t}^{s}\left|b\left(r,\overline{X}_{n_{k}}(r),U(r)\right)-b\left(r,\overline{X}_{n_{k}}(r),U_{n_{k}}(r)\right)\right|^{2}{\bf 1}_{A_{k}}\,dr\right)\\
&\qquad +3\,\mathbb{E}\left(\int_{t}^{s}\left|b\left(r,\overline{X}_{n_{k}}(r),U(r)\right)-b\left(r,\overline{X}_{n_{k}}(r),U_{n_{k}}(r)\right)\right|^{2}{\bf 1}_{A_{k}^{c}}\,dr\right)
\end{align*}
\begin{align*}
&\quad\leq 3C^{2}\int_{t}^{s}\mathbb{E}\left(\left|X(r)-\overline{X}_{n_{k}}(r)\right|^{2}\right)dr+12T\left\|b\right\|^{2}_{L^{\infty}( \overline{Q^{0}}\times\mathcal{U})}\text{Leb}\otimes\mathbb{P}(A_{k})+\frac{3T}{k^{2}}\\
&\quad\leq 3C^{2}\int_{t}^{s}\mathbb{E}\left(\sup_{\ell\in[t,r]}\left|X(\ell)-\overline{X}_{n_{k}}(\ell)\right|^{2}\right)dr+\frac{12C^{2}T}{k}+\frac{3T}{k^{2}}.
\end{align*}
Similarly, by Assumption \ref{assump:SDECoefs}-(iii) and \eqref{eq:EstDiffbsigmaU} again,
\begin{align*}
&\mathbb{E}\left(\int_{t}^{s}\left\|\sigma\left(r,X(r),U(r)\right)-\sigma\left(r,\overline{X}_{n_{k}}(r),U_{n_{k}}(r)\right)\right\|^{2}dr\right)\\
&\quad\leq 3C^{2}\int_{t}^{s}\mathbb{E}\left(\sup_{\ell\in[t,r]}\left|X(\ell)-\overline{X}_{n_{k}}(\ell)\right|^{2}\right)dr+\frac{12C^{2}T}{k}+\frac{3T}{k^{2}},
\end{align*}
and by Assumption \ref{assump:SDECoefs}-(iii) and \eqref{eq:EstDiffgammaU},
\begin{align*}
&\mathbb{E}\left(\int_{t}^{s}\int_{\mathbb{R}^{m_{2}}_{0}}\left|\gamma\left(r,X(r),U(r),z\right)-\gamma\left(r,\overline{X}_{n_{k}}(r),U_{n_{k}}(r),z\right)\right|^{2}\nu(dz)\,dr\right)\\
&\quad\leq 3C^{2}M\int_{t}^{s}\mathbb{E}\left(\sup_{\ell\in[t,r]}\left|X(\ell)-\overline{X}_{n_{k}}(\ell)\right|^{2}\right)dr+\frac{12C^{2}MT}{k}+\frac{3MT}{k^{2}}.
\end{align*}
Therefore, we obtain
\begin{align*}
&\mathbb{E}\left(\sup_{\ell\in[t,s]}\left|X(\ell)-\overline{X}_{n_{k}}(\ell)\right|^{2}\right)\\
&\quad\leq 9C^{2}\left(T+\overline{\Lambda}_{1}\right)(2+M)\left(\int_{t}^{s}\mathbb{E}\left(\sup_{\ell\in[t,r]}\left|X(\ell)-\overline{X}_{n_{k}}(\ell)\right|^{2}\right)dr+\frac{4C^{2}T}{k}+\frac{T}{k^{2}}\right),
\end{align*}
and \eqref{eq:ApproxSDESolsFinControl} follows immediately from Gronwall's inequality.
\end{proof}

Let $\Psi\in C^{1+\alpha/2,\,2+\alpha}(\overline{Q^{0}})$ for some small $\alpha>0$. From the arguments after the proof of Theorem \ref{thm:wdeltan}, under Assumptions \ref{assump:SDECoefs}, \ref{assump:GammaPsi}, \ref{assump:Domain O}, and \ref{assump:ellpiticity along boundary}, we know that the equation
\begin{align}\label{eq:HJBTildeDeltan}
\inf_{u\in\mathcal{U}_{n}}\left(\mathscr{A}^{u}\overline{W}_{\delta,n}(t,x)+\Gamma(t,x,u)\right)=0\quad\text{in }\,Q_{\delta},
\end{align}
with terminal-boundary condition
\begin{align}\label{eq:TermBoundCondHJBDeltan1}
\overline{W}_{\delta,n}(t,x)=\Psi(t,x),\quad (t,x)\in\partial_{\text{np}}Q_{\delta},
\end{align}
admits a unique viscosity solution $\overline{W}_{\delta,n}\in C_{b}(\overline{Q^{0}})$. Moreover, the functions $\{\overline{W}_{\delta,n}\}_{n\in\mathbb{N}}$ are uniformly bounded by the construction in~\cite[Theorem 3.2]{Mou:2017}. Similarly,
\begin{align}\label{eq:HJBDelta1}
\inf_{u\in\mathcal{U}}\left(\mathscr{A}^{u}\widehat{W}_{\delta}(t,x)+\Gamma(t,x,u)\right)=0\quad\text{in }\,Q_{\delta},
\end{align}
with terminal-boundary condition \eqref{eq:TermBoundCondHJBDeltan1}, admits a unique viscosity solution $\widehat{W}_{\delta}\in C_{b}(\overline{Q^{0}})$. Notice that the existence results in \cite{Mou1:2016} allow for a general (infinite) control space $\mathcal{U}$.

\smallskip
For any $(t,x)\in \overline{Q^{0}}$, let
\begin{align*}
\overline{W}_{\delta}(t,x)&:=\lim_{k\rightarrow\infty}\sup\left\{\overline{W}_{\delta,n}(s,y):\,n\geq k,\,s\in[0,T]\cap\left[t-\frac{1}{k},t+\frac{1}{k}\right],\,y\in\overline{B}_{1/k}(x)\right\},\\
\overline{W}^{\delta}(t,x)&:=\lim_{k\rightarrow\infty}\inf\left\{\overline{W}_{\delta,n}(s,y):\,n\geq k,\,s\in[0,T]\cap\left[t-\frac{1}{k},t+\frac{1}{k}\right],\,y\in\overline{B}_{1/k}(x)\right\}.
\end{align*}
\begin{lemma}\label{lem:VisSubSuperSolsOverlineWDelta}
Let Assumptions \ref{assump:SDECoefs}, \ref{assump:GammaPsi}, \ref{assump:Domain O}, and \ref{assump:ellpiticity along boundary} be valid. Let $\Psi\in C^{1+\alpha/2,2+\alpha}(\overline{Q^{0}})$ for some $\alpha>0$. Then $\overline{W}_{\delta}$ (respectively, $\overline{W}^{\delta}$) is a viscosity subsolution (respectively, supersolution) to \eqref{eq:HJBDelta1}.
\end{lemma}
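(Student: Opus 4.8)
The plan is to carry over, almost verbatim, the relaxed half-limit argument from the proof of Lemma~\ref{lem:VisSubSuperSolsWDelta}; the only structurally new feature is that the approximating problems \eqref{eq:HJBTildeDeltan} differ from the limiting problem \eqref{eq:HJBDelta1} not through their coefficients but only through the control sets $\mathcal{U}_n$, which increase to the dense subset $\{v_i\}_{i\in\mathbb{N}}$ of $\mathcal{U}$. Being relaxed half-limits of the uniformly bounded family $\{\overline{W}_{\delta,n}\}_{n\in\mathbb{N}}$, the function $\overline{W}_\delta$ is bounded and upper semicontinuous and $\overline{W}^\delta$ is bounded and lower semicontinuous, so it suffices to verify the test-function inequalities. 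I treat only $\overline{W}_\delta$, the case of $\overline{W}^\delta$ being symmetric. First I would reduce to a migrating maximum exactly as in Lemma~\ref{lem:VisSubSuperSolsWDelta}: given $\varphi\in C_b^{1,2}(\overline{Q^0})$ such that $\overline{W}_\delta-\varphi$ attains a maximum over $\overline{Q^0}$ at some $(t_0,x_0)\in Q_\delta$, I would invoke the barrier functions of Section~\ref{sec:VisConstrn} — which sandwich every $\overline{W}_{\delta,n}$ between a continuous viscosity subsolution and a continuous viscosity supersolution of \eqref{eq:HJBTildeDeltan}, both equal to $\Psi$ on $\partial_{\text{np}}Q_\delta$, hence forcing $\overline{W}_\delta=\overline{W}^\delta=\Psi$ there — together with the usual perturbation of $\varphi$, so that the maximum becomes strict and $\overline{W}_\delta-\varphi\le c_0<0$ on $\partial_{\text{np}}Q_\delta$. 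The compactness/covering argument of Lemma~\ref{lem:VisSubSuperSolsWDelta} then produces points $(t_k,x_k)\to(t_0,x_0)$, with $(t_k,x_k)\in Q_\delta$ for $k$ large, and indices $n_k\uparrow\infty$, such that $\overline{W}_{\delta,n_k}-\varphi$ attains its maximum over $\overline{Q^0}$ at $(t_k,x_k)$.

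Since $\overline{W}_{\delta,n_k}$ is a viscosity subsolution of \eqref{eq:HJBTildeDeltan} with $n=n_k$ and $(t_k,x_k)\in Q_\delta$, I get $\inf_{u\in\mathcal{U}_{n_k}}\bigl(\mathscr{A}^u\varphi(t_k,x_k)+\Gamma(t_k,x_k,u)\bigr)\ge0$; in particular, for each fixed $i\in\mathbb{N}$ and all $k$ with $n_k\ge i$ (so that $v_i\in\mathcal{U}_{n_k}$), $\mathscr{A}^{v_i}\varphi(t_k,x_k)+\Gamma(t_k,x_k,v_i)\ge0$. The key analytic input is the joint continuity on $\overline{Q^0}\times\mathcal{U}$ of $(t,x,u)\mapsto\mathscr{A}^u\varphi(t,x)+\Gamma(t,x,u)$: the drift and diffusion terms are continuous because $b$ and $\sigma$ are uniformly continuous and bounded on $\overline{Q^0}\times\mathcal{U}$ (Assumption~\ref{assump:SDECoefs}(ii)) and $\varphi_t,D_x\varphi,D_x^2\varphi$ are continuous; $\Gamma$ is uniformly continuous (Assumption~\ref{assump:GammaPsi}(ii)); and the nonlocal integral is continuous by dominated convergence, its integrand being dominated, uniformly in $(t,x,u)$, by $\min\bigl\{\tfrac{C^2}{2}\|D_x^2\varphi\|_{L^\infty}\rho^2(z),\,2\|\varphi\|_{L^\infty}+C\|D_x\varphi\|_{L^\infty}\rho(z)\bigr\}$, which is $\nu$-integrable since $M=\int\rho^2\,d\nu<\infty$ and $\nu(\{\rho\ge1\})\le M$, while the pointwise convergence of the integrand under $(t_k,x_k)\to(t_0,x_0)$ (and under $u\to v_i$, for the $u$-continuity) follows from the weighted modulus bound on $\gamma$ in Assumption~\ref{assump:SDECoefs}(iii) and the continuity of $\varphi,D_x\varphi$. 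Letting $k\to\infty$ yields $\mathscr{A}^{v_i}\varphi(t_0,x_0)+\Gamma(t_0,x_0,v_i)\ge0$ for every $i$, and since $\{v_i\}$ is dense in $\mathcal{U}$ and $u\mapsto\mathscr{A}^u\varphi(t_0,x_0)+\Gamma(t_0,x_0,u)$ is continuous, I conclude $\inf_{u\in\mathcal{U}}\bigl(\mathscr{A}^u\varphi(t_0,x_0)+\Gamma(t_0,x_0,u)\bigr)\ge0$, which is the desired subsolution inequality.

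For $\overline{W}^\delta$ the same scheme applies with a migrating minimum; the only place where the direction of the inequality matters is the passage from $\inf_{u\in\mathcal{U}_{n_k}}\bigl(\mathscr{A}^u\varphi(t_k,x_k)+\Gamma(t_k,x_k,u)\bigr)\le0$ to $\inf_{u\in\mathcal{U}}\bigl(\mathscr{A}^u\varphi(t_0,x_0)+\Gamma(t_0,x_0,u)\bigr)\le0$. This I would handle by setting $H(t,x):=\inf_{u\in\mathcal{U}}\bigl(\mathscr{A}^u\varphi(t,x)+\Gamma(t,x,u)\bigr)$, noting $H(t_k,x_k)\le\inf_{u\in\mathcal{U}_{n_k}}\bigl(\mathscr{A}^u\varphi(t_k,x_k)+\Gamma(t_k,x_k,u)\bigr)\le0$, and using that $H$ is continuous near $(t_0,x_0)$ — being an infimum over $\mathcal{U}$ of a family equicontinuous in $(t,x)$, again by Assumption~\ref{assump:SDECoefs} — so that $H(t_0,x_0)=\lim_k H(t_k,x_k)\le0$. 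I expect the main obstacle to be exactly this continuity input — above all the joint continuity of the nonlocal term and its $u$-uniform continuity in $(t,x)$ — which is where the weighted Lipschitz/modulus structure of $\gamma$ and the integrability $M<\infty$ in Assumption~\ref{assump:SDECoefs}(iii) are indispensable; the relaxed-limit covering machinery itself is routine and identical to Lemma~\ref{lem:VisSubSuperSolsWDelta}.
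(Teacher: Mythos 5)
Your proposal takes the same route as the paper: form the relaxed half-limits, get a migrating sequence $(t_\ell,x_\ell)\to(t_0,x_0)$ with $n_\ell\uparrow\infty$ exactly as in Lemma~\ref{lem:VisSubSuperSolsWDelta}, obtain $\inf_{u\in\mathcal{U}_{n_\ell}}\bigl(\mathscr{A}^u\varphi(t_\ell,x_\ell)+\Gamma(t_\ell,x_\ell,u)\bigr)\geq 0$ from the subsolution property of $\overline{W}_{\delta,n_\ell}$, and pass to the limit. The paper compresses the final passage into the single phrase ``Letting $\ell\to\infty$ completes the proof''; you have correctly identified and filled in what that step implicitly requires — namely, fixing $v_i$, sending $\ell\to\infty$ using joint continuity of $(t,x,u)\mapsto\mathscr{A}^u\varphi(t,x)+\Gamma(t,x,u)$ (with the $\nu$-integrable dominator $\min\{\tfrac{C^2}{2}\|D_x^2\varphi\|_\infty\rho^2,\,2\|\varphi\|_\infty+C\|D_x\varphi\|_\infty\rho\}$ justified via $M<\infty$), and then using density of $\{v_i\}$ in $\mathcal{U}$ together with continuity in $u$ for the subsolution side, and the inclusion $\mathcal{U}_n\subset\mathcal{U}$ plus the equicontinuity of the Hamiltonian family for the supersolution side. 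This is a correct and more explicit rendition of the paper's sketched argument, not a different method.
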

\begin{proof}
The proof is similar to the proof of Lemma \ref{lem:VisSubSuperSolsWDelta}, and we only sketch it for $\overline{W}_{\delta}$. If $\overline{W}_{\delta}-\varphi$ has a strict global maximum over $\overline {Q^{0}}$ (equal to $0$) at some $(t_{0},x_{0})\in Q_{\delta}$ for some test function $\varphi\in C_{b}^{1,2}(\overline{Q^{0}})$, repeating the arguments from the proof of Lemma \ref{lem:VisSubSuperSolsWDelta}, we obtain for any positive sequence $\{\varepsilon_{\ell}\}_{\ell\in\mathbb{N}}$, with $\varepsilon_{\ell}\downarrow 0$ as $\ell\rightarrow\infty$, points $(t_{\ell},x_{\ell})\in Q_\delta\cap B_{\varepsilon_{\ell}}(t_{0},x_{0})$ and $n_{\ell}\in\mathbb{N}$, satisfying $n_{\ell}\uparrow\infty$ as $\ell\rightarrow\infty$, such that
\begin{align*}
\inf_{u\in\mathcal{U}_{n_{\ell}}}\left(\mathscr{A}^{u}\varphi(t_{\ell},x_{\ell})+\Gamma(t_{\ell},x_{\ell},u)\right)\geq 0.
\end{align*}
Letting $\ell\rightarrow\infty$ completes the proof of the lemma.
\end{proof}

The following result is an analog to Lemma \ref{lem:ConvWdeltanWdelta} above. The proof is very similar to that of Lemma \ref{lem:ConvWdeltanWdelta}, and is thus omitted.
\begin{lemma}\label{lem:ConvOverlineWdeltanWdelta}
let Assumptions \ref{assump:SDECoefs}, \ref{assump:GammaPsi}, \ref{assump:Domain O}, and \ref{assump:ellpiticity along boundary} be valid, and let $\Psi\in C^{1+\alpha/2,\,2+\alpha}(\overline{Q^{0}})$ for some $\alpha>0$. Then, both \eqref{eq:HJBTildeDeltan} and \eqref{eq:HJBDelta1} have the unique viscosity solutions $\overline{W}_{\delta,n}$ and $\widehat W_{\delta}$, respectively, and $\|\overline{W}_{\delta,n}-\widehat{W}_{\delta}\|_{L^{\infty}(\overline{Q^{0}})}\rightarrow 0$, as $n\rightarrow\infty$.
\end{lemma}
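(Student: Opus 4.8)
The plan is to mimic, essentially line by line, the proof of Lemma~\ref{lem:ConvWdeltanWdelta}, with the coefficient perturbations $b_n,\sigma_n,\gamma_n,\Gamma_n,\epsilon_n$ used there replaced here by the control-set truncation $\mathcal{U}_n\uparrow\mathcal{U}$. The existence and uniqueness of the viscosity solutions $\overline{W}_{\delta,n}\in C_b(\overline{Q^0})$ to \eqref{eq:HJBTildeDeltan}--\eqref{eq:TermBoundCondHJBDeltan1} and $\widehat{W}_\delta\in C_b(\overline{Q^0})$ to \eqref{eq:HJBDelta1}--\eqref{eq:TermBoundCondHJBDeltan1} were already recorded in the paragraph preceding Lemma~\ref{lem:VisSubSuperSolsOverlineWDelta} (comparison principle together with the existence results of \cite{Mou:2017,Mou1:2016,Mou:2018}), so only the uniform convergence remains to be established.

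First I would invoke the sub/supersolution constructions of Section~\ref{sec:VisConstrn}. Under Assumptions~\ref{assump:Domain O} and \ref{assump:ellpiticity along boundary}, these depend only on the uniform bounds on $b,\sigma,\gamma$ and on the uniform parabolicity along $\partial O_\delta$ (coming, via Theorem~\ref{thm:proxdomain}, from prox-regularity), and not on the particular finite subset $\mathcal{U}_n$; they produce continuous functions $\psi_\delta,\psi^\delta$ with $\psi_\delta=\psi^\delta=\Psi$ on $\partial_{\rm np}Q_\delta$, where $\psi_\delta$ is a viscosity subsolution of \eqref{eq:HJBDelta1} and $\psi^\delta$ satisfies the supersolution inequality $\mathscr{A}^u\psi^\delta+\Gamma\le 0$ with respect to each $u\in\mathcal{U}$ separately. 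Since $\mathcal{U}_n\subset\mathcal{U}$ forces $\inf_{u\in\mathcal{U}}(\cdot)\le\inf_{u\in\mathcal{U}_n}(\cdot)$, the function $\psi_\delta$ is then a subsolution and $\psi^\delta$ a supersolution of \eqref{eq:HJBTildeDeltan} for every $n$, so the comparison principle gives $\psi_\delta\le\overline{W}_{\delta,n}\le\psi^\delta$ on $\overline{Q^0}$, uniformly in $n$. Consequently $\{\overline{W}_{\delta,n}\}_{n\in\mathbb{N}}$ is uniformly bounded and the half-relaxed limits $\overline{W}_\delta\in{\rm USC}_b(\overline{Q^0})$ and $\overline{W}^\delta\in{\rm LSC}_b(\overline{Q^0})$ in the statement are finite and satisfy $\psi_\delta\le\overline{W}^\delta\le\overline{W}_\delta\le\psi^\delta$ on $\overline{Q^0}$; in particular $\overline{W}_\delta=\overline{W}^\delta=\Psi$ on $\partial_{\rm np}Q_\delta$.

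Next I would use Lemma~\ref{lem:VisSubSuperSolsOverlineWDelta}, which states that $\overline{W}_\delta$ is a viscosity subsolution and $\overline{W}^\delta$ a viscosity supersolution of \eqref{eq:HJBDelta1} in $Q_\delta$, and apply the comparison principle for \eqref{eq:HJBDelta1}--\eqref{eq:TermBoundCondHJBDeltan1} (valid for a general control space): since both relaxed limits agree with $\Psi$ on $\partial_{\rm np}Q_\delta$, this yields $\overline{W}_\delta\le\overline{W}^\delta$ on $\overline{Q^0}$, while $\overline{W}_\delta\ge\overline{W}^\delta$ holds automatically from the definitions. Hence $\overline{W}_\delta=\overline{W}^\delta\in C(\overline{Q^0})$, this common function is a continuous viscosity solution of \eqref{eq:HJBDelta1}--\eqref{eq:TermBoundCondHJBDeltan1}, and by uniqueness it equals $\widehat{W}_\delta$. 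The uniform convergence $\|\overline{W}_{\delta,n}-\widehat{W}_\delta\|_{L^\infty(\overline{Q^0})}\to0$ then follows by the usual contradiction argument on the compact set $\overline{Q}_\delta$ (the uniform barriers keep the near-maximizers away from $\partial_{\rm np}Q_\delta$): a failure would produce $\varepsilon_0>0$, $n_k\uparrow\infty$, and $(t_k,x_k)\to(t_0,x_0)\in\overline{Q}_\delta$ with $|\overline{W}_{\delta,n_k}(t_k,x_k)-\widehat{W}_\delta(t_k,x_k)|>\varepsilon_0$, and letting $k\to\infty$ would give $\overline{W}_\delta(t_0,x_0)-\widehat{W}_\delta(t_0,x_0)\ge\varepsilon_0$ or $\overline{W}^\delta(t_0,x_0)-\widehat{W}_\delta(t_0,x_0)\le-\varepsilon_0$, contradicting $\overline{W}_\delta=\overline{W}^\delta=\widehat{W}_\delta$.

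I expect the only genuinely nontrivial point to be the construction in Section~\ref{sec:VisConstrn} of the $n$-independent barriers $\psi_\delta,\psi^\delta$ that attain the boundary data $\Psi$ on $\partial_{\rm np}Q_\delta$ and, crucially, whose moduli of continuity do not depend on $n$ — this is exactly where Assumptions~\ref{assump:Domain O} and \ref{assump:ellpiticity along boundary} are used. Once these barriers are in hand, the relaxed-limit and comparison machinery runs verbatim as in the proof of Lemma~\ref{lem:ConvWdeltanWdelta}, which is why the details can safely be omitted.
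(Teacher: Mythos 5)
Your proposal is correct and follows essentially the same approach as the paper's (omitted) proof, which is simply a transcription of the proof of Lemma~\ref{lem:ConvWdeltanWdelta} with the coefficient approximations replaced by the control-set truncations and with Lemma~\ref{lem:VisSubSuperSolsWDelta} replaced by Lemma~\ref{lem:VisSubSuperSolsOverlineWDelta}. Your extra remark that the barrier $\psi^{\delta}$ from Lemma~\ref{lem:barfun1} is a supersolution for each $u\in\mathcal{U}$ separately (hence a supersolution of \eqref{eq:HJBTildeDeltan} even though $\mathcal{U}_n\subset\mathcal{U}$ pushes the infimum the ``wrong'' way) is a correct and worthwhile elaboration of what is packed into the statement of Lemma~\ref{lem:barfun1}.
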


The following theorem provides a representation formula for $\widehat{W}_{\delta}$.
\begin{theorem}\label{thm:DPWdelta}
Let Assumptions \ref{assump:SDECoefs}, \ref{assump:GammaPsi}, \ref{assump:Domain O}, and \ref{assump:ellpiticity along boundary} be valid, and let $\Psi\in C^{1+\alpha/2,\,2+\alpha}(\overline{Q^{0}})$ for some $\alpha>0$. Let $t\in[0,T)$, let $\mu_{1}=(\Omega,\mathscr{F},\mathscr{F}_{s}^{t},\mathbb{P},\mathcal{W},\widetilde{\mathcal{W}},\mathcal{L})$
be an extended generalized reference probability space, and set $\mu=(\Omega,\mathscr{F},\mathscr{F}_{s}^{t},\mathbb{P},\mathcal{W},\mathcal{L})$. Then, for any $x\in\overline{O}$,
\begin{align}\label{eq:DPWdelta}
\widehat{W}_{\delta}(t,x)=\inf_{U\in\mathcal{A}_{\mu}}\mathbb{E}\left(\int_{t}^{\tau(t,x)}\Gamma\left(s,X(s;t,x),U(s)\right)ds+\widehat{W}_{\delta}\left(\tau(t,x),X(\tau(t,x);t,x)\right)\right).
\end{align}
\end{theorem}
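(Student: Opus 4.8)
The plan is to obtain \eqref{eq:DPWdelta} by combining the finite‑control‑set representation of Theorem \ref{thm:DPWdeltaFin}, applied to the sets $\mathcal U_n=\{v_1,\dots,v_n\}$, with the uniform convergence $\|\overline W_{\delta,n}-\widehat W_\delta\|_{L^\infty(\overline{Q^0})}\to 0$ from Lemma \ref{lem:ConvOverlineWdeltanWdelta} and the control‑approximation estimates of Lemma \ref{lem:ApproControl}. For $U\in\mathcal A_\mu$ write $G(U):=\mathbb E\big(\int_t^{\tau(t,x)}\Gamma(s,X(s;t,x),U(s))\,ds+\widehat W_\delta(\tau(t,x),X(\tau(t,x);t,x))\big)$, so that \eqref{eq:DPWdelta} asserts $\widehat W_\delta(t,x)=\inf_{U\in\mathcal A_\mu}G(U)$. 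For each $n$ the set $\mathcal U_n$ is finite, Assumptions \ref{assump:SDECoefs}, \ref{assump:GammaPsi}, \ref{assump:Domain O}, \ref{assump:ellpiticity along boundary} are in force, and $\overline W_{\delta,n}$ is the unique viscosity solution of \eqref{eq:HJBTildeDeltan}; hence Theorem \ref{thm:DPWdeltaFin} applies with $\mathcal U$ replaced by $\mathcal U_n$ and yields, for $x\in\overline O$, $\overline W_{\delta,n}(t,x)=\inf_{V\in\mathcal A_\mu^n}\mathbb E\big(\int_t^{\tau_V}\Gamma(s,\overline X_n(s),V(s))\,ds+\overline W_{\delta,n}(\tau_V,\overline X_n(\tau_V))\big)$, where $\overline X_n=\overline X_n(\cdot;t,x)$ solves \eqref{eq:MainSDEsUn} with control $V$ and $\tau_V$ is its first exit time from $Q$. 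Since \eqref{eq:MainSDEsUn} is \eqref{eq:MainSDEs} driven by the (now $\mathcal U$‑valued) control $V\in\mathcal A_\mu^n\subset\mathcal A_\mu$, the process $\overline X_n$ and the time $\tau_V$ are exactly those entering $G(V)$.

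The inequality $\inf_{U\in\mathcal A_\mu}G(U)\le\widehat W_\delta(t,x)$ is immediate: pick, for each $n$, a control $U_n\in\mathcal A_\mu^n$ that is $\tfrac1n$‑optimal in the displayed formula for $\overline W_{\delta,n}(t,x)$; replacing $\overline W_{\delta,n}$ by $\widehat W_\delta$ in the terminal term changes the expectation by at most $\|\overline W_{\delta,n}-\widehat W_\delta\|_{L^\infty(\overline{Q^0})}$, whence $\inf_{U\in\mathcal A_\mu}G(U)\le G(U_n)\le\overline W_{\delta,n}(t,x)+\tfrac1n+\|\overline W_{\delta,n}-\widehat W_\delta\|_{L^\infty(\overline{Q^0})}$, and letting $n\to\infty$ and using Lemma \ref{lem:ConvOverlineWdeltanWdelta} gives the claim. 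No exit‑time analysis is needed here, because the same control is used in both cost functionals.

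For the reverse inequality fix $U\in\mathcal A_\mu$; it suffices to prove $G(U)\ge\widehat W_\delta(t,x)$. By Lemma \ref{lem:ApproControl} there are $U_{n_k}\in\mathcal A_\mu^{n_k}$ with $n_k\uparrow\infty$ such that, writing $X=X(\cdot;t,x)$ for the state with control $U$ and $\overline X_{n_k}$ for the state with control $U_{n_k}$, we have $\mathbb E(\sup_{\ell\in[t,T]}|X(\ell)-\overline X_{n_k}(\ell)|^2)\to0$ and $\mathbb E(\int_t^T|\Gamma(s,X(s),U_{n_k}(s))-\Gamma(s,X(s),U(s))|^2\,ds)\to0$; in addition, modifying each $U_{n_k}$ on the countable set of jump times of $\mathcal L$ so that it coincides there with $U$ affects none of these estimates but makes $X$ and $\overline X_{n_k}$ jump simultaneously with asymptotically equal jumps. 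Applying the $\mathcal U_{n_k}$‑representation with $V=U_{n_k}$ (more conveniently, Remark \ref{rem:ImprovDPFinite} with $V=U_{n_k}$ and stopping time $\theta=\tau_U$, the exit time of $X$ from $Q$), and replacing $\overline W_{\delta,n_k}$ by $\widehat W_\delta$ in the terminal value, gives $\mathbb E\big(\int_t^{\tau_U\wedge\tau_{U_{n_k}}}\Gamma(s,\overline X_{n_k}(s),U_{n_k}(s))\,ds+\widehat W_\delta(\tau_U\wedge\tau_{U_{n_k}},\overline X_{n_k}(\tau_U\wedge\tau_{U_{n_k}}))\big)\ge\overline W_{\delta,n_k}(t,x)-\|\overline W_{\delta,n_k}-\widehat W_\delta\|_{L^\infty(\overline{Q^0})}$, where $\tau_{U_{n_k}}$ is the exit time of $\overline X_{n_k}$ from $Q$. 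By Lemma \ref{lem:ConvOverlineWdeltanWdelta} the right‑hand side tends to $\widehat W_\delta(t,x)$, so it remains to show that the left‑hand side tends to $G(U)$.

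This last step is the technical core and the one I expect to be the main obstacle. Splitting the running‑cost integral at $\tau_U\wedge\tau_{U_{n_k}}$, using $|\Gamma(s,\overline X_{n_k}(s),U_{n_k}(s))-\Gamma(s,X(s),U(s))|\le\varpi_\Gamma(|\overline X_{n_k}(s)-X(s)|)+|\Gamma(s,X(s),U_{n_k}(s))-\Gamma(s,X(s),U(s))|$ (with $\varpi_\Gamma$ a modulus of continuity of $\Gamma$ in the state variable, uniform in the other arguments) together with Jensen's inequality and Lemma \ref{lem:ApproControl} on $[t,\tau_U\wedge\tau_{U_{n_k}}]$, bounding the overshoot by $\|\Gamma\|_{L^\infty(\overline{Q^0}\times\mathcal U)}|\tau_U-\tau_U\wedge\tau_{U_{n_k}}|$, and using a modulus of continuity $\varpi_\delta$ of $\widehat W_\delta$ on $\overline{Q^0}$ for the terminal term, the convergence reduces to $\mathbb E|\tau_U-\tau_U\wedge\tau_{U_{n_k}}|\to0$ and $\mathbb E|\overline X_{n_k}(\tau_U\wedge\tau_{U_{n_k}})-X(\tau_U)|\to0$, that is, essentially, to the convergence of the exit time and exit position of $\overline X_{n_k}$ to those of $X$. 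Here Assumption \ref{assump:ellpiticity along boundary} is crucial: the uniform non‑degeneracy of $\sigma\sigma^T$ in the proximal‑normal direction along $\partial O$ makes exits from $O$ transversal, so that a small uniform perturbation of the trajectory perturbs the exit time and exit point only slightly. Quantitatively, as in Lemma \ref{lem:ProbSndelta}, one controls the event that $X$ and $\overline X_{n_k}$ separate by more than $\delta/2$ before $\tau_\delta\vee\overline\tau_{\delta,n_k}$ (with $\tau_\delta,\overline\tau_{\delta,n_k}$ the exit times of $X$, respectively $\overline X_{n_k}$, from $O_{\delta/2}$) — of probability $o_{1/k}(1)$ by Lemma \ref{lem:ApproControl} and Chebyshev's inequality — and combines this with the non‑degeneracy and the asymptotic equality of the jumps of $X$ and $\overline X_{n_k}$ to get convergence in probability, which Theorem \ref{thm:StrSolsSDEs} upgrades to $L^1$. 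Hence $G(U)\ge\widehat W_\delta(t,x)$, and since $U\in\mathcal A_\mu$ is arbitrary, \eqref{eq:DPWdelta} follows.
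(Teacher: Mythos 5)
Your easy direction, showing $\inf_{U\in\mathcal{A}_\mu}G(U)\le\widehat W_\delta(t,x)$ by picking $\tfrac1n$-optimal controls for $\overline W_{\delta,n}$ and invoking Lemma~\ref{lem:ConvOverlineWdeltanWdelta}, is exactly what the paper does (it appears there as the last display of the proof). The hard direction, however, has a genuine gap, and it comes from a single but consequential choice: you stop the approximating process $\overline X_{n_k}$ at its exit time $\tau_{U_{n_k}}$ from $O$, i.e.\ from the \emph{same} domain as $X$. Having made that choice, you are forced to ``reduce'' the remaining convergence to $\mathbb{E}|\tau_U-\tau_U\wedge\tau_{U_{n_k}}|\to 0$ and $\mathbb{E}|\overline X_{n_k}(\tau_U\wedge\tau_{U_{n_k}})-X(\tau_U)|\to 0$ and you appeal to ``transversality'' from Assumption~\ref{assump:ellpiticity along boundary}. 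That is not a small step: convergence of exit times from a fixed domain under a uniform perturbation of the path is a delicate pathwise statement (especially for jump processes, where paths can cross the boundary without hitting it), and nothing in the paper or its quoted ingredients (Lemmas~\ref{lem:ProbSndelta}, \ref{lem:ApproControl}, \ref{lem:ConvOverlineWdeltanWdelta}) supplies it. You do not prove it, and the remark about modifying $U_{n_k}$ at the jump times of $\mathcal L$ to ``synchronize the jumps'' is not a legitimate manoeuvre: the jump times of $\mathcal L$ are totally inaccessible, so $\{(s,\omega):\Delta\mathcal L(s)\neq0\}$ is not predictable, and the modified control would no longer lie in $\mathcal{A}_\mu^{n_k}$; it does, moreover, change the stochastic integral against $\widetilde N$, so it is not cost-free.

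The paper avoids this obstruction precisely by \emph{not} using the exit of $\overline X_{n_k}$ from $O$. It applies the (stopped) DPP for $\overline W_{\delta,n_k}$ on the larger domain and evaluates at the stopping time $\tau^\eta\wedge\overline\tau^\eta_{\delta,n_k}$, where $\overline\tau^\eta_{\delta,n_k}$ is the exit of $\overline X_{n_k}$ from the intermediate domain $O_{\delta/2}\supset O$. With that cushion the problem collapses: on the event $\bigl(\overline{\mathcal S}^\eta_{\delta,k}\bigr)^c$ where $X$ and $\overline X_{n_k}$ remain within $\delta/2$ of one another, exit of $X$ from $O$ at time $\tau^\eta$ forces $\overline X_{n_k}$ to still lie inside $O_{\delta/2}$, so $\overline\tau^\eta_{\delta,n_k}\ge\tau^\eta$ and hence $\tau^\eta\wedge\overline\tau^\eta_{\delta,n_k}=\tau^\eta$ \emph{exactly}; since $\mathbb P(\overline{\mathcal S}^\eta_{\delta,k})\to0$ (Chebyshev plus Lemma~\ref{lem:ApproControl}), the stopping-time discrepancy is of probability $o(1)$ and everything goes through with only the modulus of continuity of $\widehat W_\delta$ and no information about boundary behaviour. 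Assumption~\ref{assump:ellpiticity along boundary} is needed earlier, to produce the barriers and hence the uniform convergence in Lemma~\ref{lem:ConvOverlineWdeltanWdelta}, not to control the exit times here. To repair your argument, replace $\tau_{U_{n_k}}$ by $\overline\tau^\eta_{\delta,n_k}$ (the exit from $O_{\delta/2}$) in the DPP display, which is legitimate because $\overline W_{\delta,n_k}$ solves the equation on the full $Q_\delta\supset[0,T)\times O_{\delta/2}$; then both error terms you isolate are supported on $\overline{\mathcal S}^\eta_{\delta,k}$ and vanish, and the transversality argument is unnecessary.
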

\begin{proof}
For any $\eta>0$, there exists $U^{\eta}\in\mathcal{A}_{\mu}$, such that
\begin{align}
&\mathbb{E}\left(\int_{t}^{\tau^{\eta}}\Gamma\left(s,X^{\eta}(s),U^{\eta}(s)\right)ds+\widehat{W}_{\delta}\left(\tau^{\eta},X^{\eta}(\tau^{\eta})\right)\right)\nonumber\\
\label{eq:Approxeta} &\quad\leq\inf_{U\in\mathcal{A}_{\mu}}\mathbb{E}\left(\int_{t}^{\tau}\Gamma\left(s,X(s),U(s)\right)ds+\widehat{W}_{\delta}\left(\tau,X(\tau)\right)\right)+\eta,
\end{align}
where $X^{\eta}(s)=X^\eta(s;t,x)$ is the unique strong c\`{a}dl\`{a}g solution to \eqref{eq:MainSDEs} with control process $U^{\eta}$, and where (with $\inf\emptyset=T$)
\begin{align*}
\tau^{\eta}=\tau^{\eta}(t,x):=\inf\left\{s\in[t,T]:\,X^{\eta}(s;t,x)\not\in O\right\}.
\end{align*}
By Lemma \ref{lem:ApproControl}, there exists a sequence of increasing integers $\{n_{k}\}_{k\in\mathbb{N}}$, and a corresponding sequence of control processes $\{U^{\eta}_{n_{k}}\}_{k\in\mathbb{N}}$, where for each $k\in\mathbb{N}$, $U^{\eta}_{n_{k}}\in\mathcal{A}_{\mu}^{n_{k}}$, such that,
\begin{align*}
\mathbb{E}\left(\sup_{\ell\in[t,T]}\left|X\left(\ell\wedge\tau^{\eta}\wedge\overline{\tau}^{\eta}_{\delta,n_{k}}\right)-\overline{X}^{\eta}_{n_{k}}\left(\ell\wedge\tau^{\eta}\wedge\overline{\tau}^{\eta}_{\delta,n_{k}}\right)\right|^{2}\right)=o_{1/k}(1),\quad k\rightarrow\infty,
\end{align*}
where $\overline{X}^{\eta}_{n_{k}}(s;t,x)$ is the unique strong c\`{a}dl\`{a}g solution to \eqref{eq:MainSDEsUn} with control process $U^{\eta}_{n_{k}}$, and where (with $\inf\emptyset=T$)
\begin{align*}
\overline{\tau}^{\eta}_{\delta,n_{k}}=\overline{\tau}^{\eta}_{\delta,n_{k}}(t,x):=\inf\left\{s\in[t,T]:\,\overline{X}_{n_{k}}^{\eta}(s;t,x)\not\in O_{\delta/2}\right\}.
\end{align*}
By Remark \ref{rem:ImprovDPFinite}, we have
\begin{align}\label{eq:OverlineWdeltankleq}
\overline{W}_{\delta,n_{k}}(t,x)\leq\mathbb{E}\left(\int_{t}^{\tau^{\eta}\wedge\overline{\tau}_{\delta,n_{k}}^{\eta}}\Gamma\!\left(s,\overline{X}_{n_{k}}^{\eta}(s),U_{n_{k}}^{\eta}(s)\right)ds+\overline{W}_{\delta,n_{k}}\!\left(\tau^{\eta}\!\wedge\!\overline{\tau}_{\delta,n_{k}}^{\eta},\overline{X}_{n_{k}}^{\eta}\!\left(\tau^{\eta}\!\wedge\!\overline{\tau}_{\delta,n_{k}}^{\eta}\right)\right)\!\right).
\end{align}

We need to take the limits in both sides of \eqref{eq:OverlineWdeltankleq}, first as $k\rightarrow\infty$ (for fixed $\eta>0$) and then $\eta\rightarrow 0$. We have
\begin{align*}
&\mathbb{E}\left(\left|\int_{t}^{\tau^{\eta}\wedge\overline{\tau}_{\delta,n_{k}}^{\eta}}\Gamma\left(s,\overline{X}_{n_{k}}^{\eta}(s),U_{n_{k}}^{\eta}(s)\right)ds-\int_{t}^{\tau^{\eta}}\Gamma\left(s,X^{\eta}(s),U^{\eta}(s)\right)ds\right|\right)\\
&\quad\leq\mathbb{E}\left(\int_{t}^{\tau^{\eta}\wedge\overline{\tau}_{\delta,n_{k}}^{\eta}}\left|\Gamma\left(s,\overline{X}_{n_{k}}^{\eta}(s),U_{n_{k}}^{\eta}(s)\right)-\Gamma\left(s,X^{\eta}(s),U_{n_{k}}^{\eta}(s)\right)\right|ds\right)\\
&\qquad+\mathbb{E}\!\left(\int_{t}^{\tau^{\eta}\wedge\overline{\tau}_{\delta,n_{k}}^{\eta}}\!\left|\Gamma\!\left(s,\!X^{\eta}(s),\!U_{n_{k}}^{\eta}(s)\right)\!-\!\Gamma\!\left(s,\!X^{\eta}(s),\!U^{\eta}(s)\right)\right|\!ds\!\right)\!+\mathbb{E}\!\left(\int_{\tau^{\eta}\wedge\overline{\tau}_{\delta,n_{k}}^{\eta}}^{\tau^{\eta}}\!\!\!\!\left|\Gamma\!\left(s,\!X^{\eta}(s),\!U^{\eta}(s)\right)\right|\!ds\!\right).
\end{align*}
By Assumption \ref{assump:GammaPsi}-(ii) and Lemma \ref{lem:ApproControl}, for fixed $\eta>0$, as $k\rightarrow\infty$,
\begin{align*}
\mathbb{E}\left(\int_{t}^{\tau^{\eta}\wedge\overline{\tau}_{\delta,n_{k}}^{\eta}}\left|\Gamma\left(s,\overline{X}_{n_{k}}^{\eta}(s),U_{n_{k}}^{\eta}(s)\right)-\Gamma\left(s,X^{\eta}(s),U_{n_{k}}^{\eta}(s)\right)\right|ds\right)&=o_{1/k}(1),\\
\mathbb{E}\left(\int_{t}^{\tau^{\eta}\wedge\overline{\tau}_{\delta,n_{k}}^{\eta}}\left|\Gamma\left(s,X^{\eta}(s),U_{n_{k}}^{\eta}(s)\right)-\Gamma\left(s,X^{\eta}(s),U^{\eta}(s)\right)\right|ds\right)&=o_{1/k}(1).
\end{align*}
Moreover, using a similar argument as in the proof of Lemma \ref{lem:ProbSndelta}, if we set
\begin{align*}
\overline{\mathcal{S}}^{\eta}_{\delta,k}:=\left\{\omega\in\Omega:\sup_{\ell\in[t,T]}\left|X^{\eta}\left(\ell\wedge\left(\tau^{\eta}\vee\overline{\tau}_{\delta,n_{k}}^{\eta}\right)\right)(\omega)-\overline{X}^{\eta}_{n_{k}}\left(\ell\wedge\left(\tau^{\eta}\vee\overline{\tau}_{\delta,n_{k}}^{\eta}\right)\right)(\omega)\right|>\frac{\delta}{2}\right\},
\end{align*}
then
\begin{align}\label{eq:tautaudeltanketa}
\tau^{\eta}(\omega)\wedge\overline{\tau}_{\delta,n_{k}}^{\eta}(\omega)=\tau^{\eta}(\omega),\quad\text{for all }\,\omega\in\left(\overline{\mathcal{S}}^{\eta}_{\delta,k}\right)^{c},
\end{align}
and by \eqref{eq:ApproxSDESolsFinControl} and Chebyshev's inequality,
\begin{align}\label{eq:ProbOverlineSetadeltak}
\mathbb{P}\left(\overline{\mathcal{S}}^{\eta}_{\delta,k}\right)\rightarrow 0,\quad\text{as }\,k\rightarrow\infty.
\end{align}
Hence,
\begin{align*}
\mathbb{E}\left(\int_{\tau^{\eta}\wedge\overline{\tau}_{\delta,n_{k}}^{\eta}}^{\tau^{\eta}}\left|\Gamma\left(s,X^{\eta}(s),U^{\eta}(s)\right)\right|ds\right)&=\mathbb{E}\left(\int_{\tau^{\eta}\wedge\overline{\tau}_{\delta,n_{k}}^{\eta}}^{\tau^{\eta}}\left|\Gamma\left(s,X^{\eta}(s),U^{\eta}(s)\right)\right|{\bf 1}_{\overline{\mathcal{S}}^{\eta}_{\delta,k}}ds\right)\\
&\leq T\left\|\Gamma\right\|_{L^{\infty}(\overline{Q^{0}}\times\mathcal{U})}\mathbb{P}\left(\overline{\mathcal{S}}^{\eta}_{\delta,k}\right)\rightarrow 0,\quad\text{as }\,k\rightarrow\infty.
\end{align*}
Therefore, we obtain that, as $k\rightarrow\infty$,
\begin{align}\label{eq:EstDiffIntGammaXnketaXeta}
\mathbb{E}\left(\left|\int_{t}^{\tau^{\eta}\wedge\overline{\tau}_{\delta,n_{k}}^{\eta}}\Gamma\left(s,\overline{X}_{n_{k}}^{\eta}(s),U_{n_{k}}^{\eta}(s)\right)ds-\int_{t}^{\tau^{\eta}}\Gamma\left(s,X^{\eta}(s),U^{\eta}(s)\right)ds\right|\right)=o_{1/k}(1).
\end{align}

Next, by Lemma \ref{lem:ConvOverlineWdeltanWdelta},
\begin{align*}
&\mathbb{E}\left(\left|\overline{W}_{\delta,n_{k}}\left(\tau^{\eta}\wedge\overline{\tau}_{\delta,n_{k}}^{\eta},\overline{X}_{n_{k}}^{\eta}\left(\tau^{\eta}\wedge\overline{\tau}_{\delta,n_{k}}^{\eta}\right)\right)-\widehat{W}_{\delta}\left(\tau^{\eta},X^{\eta}\left(\tau^{\eta}\right)\right)\right|\right)\\
&\quad\leq\mathbb{E}\left(\left|\overline{W}_{\delta,n_{k}}\left(\tau^{\eta}\wedge\overline{\tau}_{\delta,n_{k}}^{\eta},\overline{X}_{n_{k}}^{\eta}\left(\tau^{\eta}\wedge\overline{\tau}_{\delta,n_{k}}^{\eta}\right)\right)-\widehat{W}_{\delta}\left(\tau^{\eta}\wedge\overline{\tau}_{\delta,n_{k}}^{\eta},\overline{X}_{n_{k}}^{\eta}\left(\tau^{\eta}\wedge\overline{\tau}_{\delta,n_{k}}^{\eta}\right)\right)\right|\right)\\
&\qquad\,+\mathbb{E}\left(\left|\widehat{W}_{\delta}\left(\tau^{\eta}\wedge\overline{\tau}_{\delta,n_{k}}^{\eta},\overline{X}_{n_{k}}^{\eta}\left(\tau^{\eta}\wedge\overline{\tau}_{\delta,n_{k}}^{\eta}\right)\right)-\widehat{W}_{\delta}\left(\tau^{\eta},X^{\eta}\left(\tau^{\eta}\right)\right)\right|\right)\\
&\quad\leq\left\|\overline{W}_{\delta,n_k}-\widehat{W}_{\delta}\right\|_{L^{\infty}(\overline{Q^0})}+\mathbb{E}\left(\left|\widehat{W}_{\delta}\left(\tau^{\eta}\wedge\overline{\tau}_{\delta,n_{k}}^{\eta},\overline{X}_{n_{k}}^{\eta}\left(\tau^{\eta}\wedge\overline{\tau}_{\delta,n_{k}}^{\eta}\right)\right)-\widehat{W}_{\delta}\left(\tau^{\eta},X^{\eta}\left(\tau^{\eta}\right)\right)\right|\right)\\
&\quad =o_{1/k}(1)+\mathbb{E}\left(\left|\widehat{W}_{\delta}\left(\tau^{\eta}\wedge\overline{\tau}_{\delta,n_{k}}^{\eta},\overline{X}_{n_{k}}^{\eta}\left(\tau^{\eta}\wedge\overline{\tau}_{\delta,n_{k}}^{\eta}\right)\right)-\widehat{W}_{\delta}\left(\tau^{\eta},X^{\eta}\left(\tau^{\eta}\right)\right)\right|\right),\quad k\rightarrow\infty.
\end{align*}
Moreover, by \eqref{eq:ApproxSDESolsFinControl}, \eqref{eq:tautaudeltanketa}, \eqref{eq:ProbOverlineSetadeltak}, and the uniform continuity of $\widehat{W}_{\delta}$ on $\overline{{Q}^{0}}$,
\begin{align*}
&\mathbb{E}\left(\left|\widehat{W}_{\delta}\left(\tau^{\eta}\wedge\overline{\tau}_{\delta,n_{k}}^{\eta},\overline{X}_{n_{k}}^{\eta}\left(\tau^{\eta}\wedge\overline{\tau}_{\delta,n_{k}}^{\eta}\right)\right)-\widehat{W}_{\delta}\left(\tau^{\eta},X^{\eta}\left(\tau^{\eta}\right)\right)\right|\right)\\
&\quad =\mathbb{E}\left(\left|\widehat{W}_{\delta}\left(\tau^{\eta}\wedge\overline{\tau}_{\delta,n_{k}}^{\eta},\overline{X}_{n_{k}}^{\eta}\left(\tau^{\eta}\wedge\overline{\tau}_{\delta,n_{k}}^{\eta}\right)\right)-\widehat{W}_{\delta}\left(\tau^{\eta},X^{\eta}\left(\tau^{\eta}\right)\right)\right|{\bf 1}_{\overline{\mathcal{S}}_{\delta,k}^{\eta}}\right)\\
&\qquad\,+\mathbb{E}\left(\left|\widehat{W}_{\delta}\left(\tau^{\eta}\wedge\overline{\tau}_{\delta,n_{k}}^{\eta},\overline{X}_{n_{k}}^{\eta}\left(\tau^{\eta}\wedge\overline{\tau}_{\delta,n_{k}}^{\eta}\right)\right)-\widehat{W}_{\delta}\left(\tau^{\eta},X^{\eta}\left(\tau^{\eta}\right)\right)\right|{\bf 1}_{(\overline{\mathcal{S}}_{\delta,k}^{\eta})^{c}}\right)\\
&\quad\leq 2\left\|\widehat{W}_{\delta}\right\|_{L^{\infty}(\overline{Q^{0}})}\mathbb{P}\left(\overline{\mathcal{S}}_{\delta,k}^{\eta}\right)+\mathbb{E}\left(\left|\widehat{W}_{\delta}\left(\tau^{\eta},\overline{X}_{n_{k}}^{\eta}\left(\tau^{\eta}\right)\right)-\widehat{W}_{\delta}\left(\tau^{\eta},X^{\eta}\left(\tau^{\eta}\right)\right)\right|\right)\\
&\quad\leq o_{1/k}(1)+\mathbb{E}\left(\varpi_{\delta}\left(\left|\overline{X}_{n_{k}}^{\eta}\left(\tau^{\eta}\right)-X^{\eta}\left(\tau^{\eta}\right)\right|\right)\right)\\
&\quad\leq o_{1/k}(1)+\varpi_{\delta}\left(\mathbb{E}\left(\left|\overline{X}_{n_{k}}^{\eta}\left(\tau^{\eta}\right)-X^{\eta}\left(\tau^{\eta}\right)\right|\right)\right)=o_{1/k}(1),\quad k\rightarrow\infty,
\end{align*}
where $\varpi_{\delta}$ is a concave modulus of continuity of $\widehat{W}_{\delta}$ in $\overline{Q^{0}}$. Therefore, we obtain
\begin{align}\label{eq:EstDiffOverlineWdeltakWdelta}
\mathbb{E}\left(\left|\overline{W}_{\delta,n_{k}}\left(\tau^{\eta}\wedge\overline{\tau}_{\delta,n_{k}}^{\eta},\overline{X}_{n_{k}}^{\eta}\left(\tau^{\eta}\wedge\overline{\tau}_{\delta,n_{k}}^{\eta}\right)\right)-\widehat{W}_{\delta}\left(\tau^{\eta},X^{\eta}\left(\tau^{\eta}\right)\right)\right|\right)=o_{1/k}(1),\quad k\rightarrow\infty.
\end{align}

Combining Lemma \ref{lem:ConvOverlineWdeltanWdelta}, \eqref{eq:Approxeta}, \eqref{eq:EstDiffIntGammaXnketaXeta}, and \eqref{eq:EstDiffOverlineWdeltakWdelta}, and taking limits, as $k\rightarrow\infty$, in both sides of \eqref{eq:OverlineWdeltankleq}, we thus have
\begin{align*}
\widehat{W}_{\delta}(t,x)&\leq\mathbb{E}\left(\int_{t}^{\tau^{\eta}}\Gamma\left(s,X^{\eta}(s),U^{\eta}(s)\right)ds+\widehat{W}_{\delta}\left(\tau^{\eta},X^{\eta}\left(\tau^{\eta}\right)\right)\right)\\
&\leq\inf_{U\in\mathcal{A}_{\mu}}\mathbb{E}\left(\int_{t}^{\tau}\Gamma\left(s,X(s),U(s)\right)ds+\widehat{W}_{\delta}\left(\tau,X(\tau)\right)\right)+\eta.
\end{align*}
Since $\eta>0$ is arbitrary, this implies
\begin{align*}
\widehat{W}_{\delta}(t,x)\leq\inf_{U\in\mathcal{A}_{\mu}}\mathbb{E}\left(\int_{t}^{\tau}\Gamma\left(s,X(s),U(s)\right)ds+\widehat{W}_{\delta}\left(\tau,X(\tau)\right)\right).
\end{align*}
On the other hand, by Remark \ref{rem:ImprovDPFinite},
\begin{align*}
\overline{W}_{\delta,n_{k}}(t,x)&=\inf_{U_{n_{k}}\in\mathcal{A}_{\mu}^{n_{k}}}\left(\int_{t}^{\tau}\Gamma\left(s,\overline{X}_{n_{k}}(s),U_{n_{k}}(s)\right)ds+\overline{W}_{\delta,n_{k}}\left(\tau,\overline{X}_{n_{k}}(\tau)\right)\right)\\
&\geq\inf_{U\in\mathcal{A}_{\mu}}\left(\int_{t}^{\tau}\Gamma\left(s,X(s),U(s)\right)ds+\overline{W}_{\delta,n_{k}}\left(\tau,X(\tau)\right)\right).
\end{align*}
Letting $k\rightarrow\infty$ above and using Lemma \ref{lem:ConvOverlineWdeltanWdelta} provides the reverse inequality, and hence completes the proof of the theorem.
\end{proof}

\begin{lemma}\label{lem:Comparison}
Let Assumptions \ref{assump:SDECoefs}, \ref{assump:GammaPsi}, \ref{assump:Domain O}, and \ref{assump:ellpiticity along boundary} be valid, and let $\Psi\in C^{1+\alpha/2,\,2+\alpha}(\overline{Q^{0}})$ for some $\alpha>0$. Let $W$ be the viscosity solution to \eqref{eq:HJB} with terminal-boundary condition \eqref{eq:TermBoundCondHJB}.
Then
\begin{align*}
\left\|W-\widehat{W}_{\delta}\right\|_{L^{\infty}(\overline{Q^{0}})}=o_{\delta}(1),\quad \text{as }\,\delta\rightarrow 0.
\end{align*}
\end{lemma}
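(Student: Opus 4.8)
The plan is to deduce the estimate from the comparison principle for \eqref{eq:HJB} on $Q$, combined with a boundary estimate showing that $\widehat{W}_\delta$ is close to its "true" data $\Psi$ on $\partial_{\text{np}}Q$, the latter being controlled by the barrier functions constructed in Section \ref{sec:VisConstrn}.

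\emph{Reduction to a boundary estimate.} Since $Q\subset Q_\delta$, the function $\widehat{W}_\delta\in C_b(\overline{Q^0})$, being a viscosity solution of \eqref{eq:HJBDelta1} in $Q_\delta$, is in particular both a viscosity sub- and a viscosity supersolution of \eqref{eq:HJB} in $Q$. Put $m_\delta:=\sup_{(t,x)\in\partial_{\text{np}}Q}|\widehat{W}_\delta(t,x)-\Psi(t,x)|$. Because $\mathscr{A}^u$ and $\Gamma$ are unchanged when a constant is added to the unknown, the functions $W\pm m_\delta$ are again viscosity solutions of \eqref{eq:HJB} in $Q$, and on $\partial_{\text{np}}Q$ one has $W-m_\delta=\Psi-m_\delta\le\widehat{W}_\delta\le\Psi+m_\delta=W+m_\delta$. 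Applying the comparison principle for \eqref{eq:HJB}--\eqref{eq:TermBoundCondHJB} on $Q$ to the pairs $(W-m_\delta,\widehat{W}_\delta)$ and $(\widehat{W}_\delta,W+m_\delta)$ then yields $\|W-\widehat{W}_\delta\|_{L^\infty(\overline{Q^0})}\le m_\delta$, so it suffices to prove $m_\delta=o_\delta(1)$.

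\emph{The boundary estimate.} I would split $\partial_{\text{np}}Q=\bigl([0,T)\times O^c\bigr)\cup\bigl(\{T\}\times\mathbb{R}^d\bigr)$ into the part $\bigl(\{T\}\times\mathbb{R}^d\bigr)\cup\bigl([0,T)\times O_\delta^c\bigr)$, which is contained in $\partial_{\text{np}}Q_\delta$ and on which $\widehat{W}_\delta=\Psi$ by the terminal-boundary condition, and the remaining thin shell $[0,T)\times(O_\delta\setminus O)$, each point $x$ of which lies within distance $\delta$ of $\partial O_\delta$. On the shell I would trap $\widehat{W}_\delta$ between barriers: by the constructions of Section \ref{sec:VisConstrn} (using Assumption \ref{assump:Domain O}, the uniform exterior ball property of $O_\delta$ from Theorem \ref{thm:proxdomain}, and the non-degeneracy Assumption \ref{assump:ellpiticity along boundary}, which transfers to $\partial O_\delta$ for $\delta$ small by the Lipschitz continuity of $\sigma\sigma^T$, exactly as in Assumption \ref{assump:CondsApproxFunts}-(iii)), there exist, for all small $\delta$, a viscosity subsolution $\psi_\delta$ and a viscosity supersolution $\psi^\delta$ of \eqref{eq:HJBDelta1} in $C_b(\overline{Q^0})$ with $\psi_\delta=\psi^\delta=\Psi$ on $\partial_{\text{np}}Q_\delta$ and sharing a modulus of continuity at points of $\partial_{\text{np}}Q_\delta$ that is independent of $\delta$. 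By the comparison principle, $\psi_\delta\le\widehat{W}_\delta\le\psi^\delta$ on $\overline{Q^0}$; hence, for $(t,x)$ in the shell, picking $x_0\in\partial O_\delta$ with $|x-x_0|\le\delta$ and using that $\psi_\delta(t,x_0)=\psi^\delta(t,x_0)=\Psi(t,x_0)$, one gets $|\widehat{W}_\delta(t,x)-\Psi(t,x)|\le\omega(\delta)+\omega_\Psi(\delta)$, where $\omega$ is the $\delta$-independent barrier modulus and $\omega_\Psi$ a modulus of continuity of $\Psi$. Therefore $m_\delta=o_\delta(1)$, and the lemma follows.

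\emph{Main difficulty.} The crux is not the comparison argument but the input from Section \ref{sec:VisConstrn}: constructing the barriers $\psi_\delta,\psi^\delta$ with a modulus of continuity that does not deteriorate as $\delta\to0$. This is precisely what Assumptions \ref{assump:Domain O} and \ref{assump:ellpiticity along boundary} are designed to provide — a uniform exterior ball and uniform non-degeneracy of the diffusion in the normal direction along $\partial O_\delta$ — and it is the only nontrivial ingredient; the shift-invariance of $\mathscr{A}^u$, the reduction to the boundary, and the elementary geometry of the shell $O_\delta\setminus O$ are routine.
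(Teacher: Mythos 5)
Your proposal is correct and takes essentially the same route as the paper's proof: both use the $\delta$-uniform barriers from Lemma~\ref{lem:barfun1} to bound $|\widehat{W}_\delta-\Psi|$ on the shell $[0,T)\times(O_\delta\setminus O)=Q_\delta\cap\partial_{\text{np}}Q$, and then transfer this boundary deviation into an interior estimate via the comparison principle on $Q$ together with the translation invariance of \eqref{eq:HJB} in the unknown. The paper states the boundary bound more tersely (defining $a_\delta$ and invoking Lemma~\ref{lem:barfun1} together with the regularity of $\Psi$) and compares $W$ with $\widehat W_\delta\pm a_\delta$ rather than $\widehat W_\delta$ with $W\pm m_\delta$, but this is the same argument; your write-up merely makes explicit the geometric fact that, thanks to prox-regularity, every point of $O_\delta\setminus O$ lies within distance $\delta$ of $\partial O_\delta$, which the paper leaves implicit.
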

\begin{proof}
In view of Lemma \ref{lem:barfun1}, there exist a uniformly continuous viscosity subsolution $\overline{\psi}_{\delta}$ and a uniformly continuous viscosity supersolution $\overline{\psi}^{\delta}$ to \eqref{eq:HJBDelta1} such that $\overline{\psi}^{\delta}=\overline{\psi}_{\delta}=\Psi$ on $\partial_{{\rm np}} Q_{\delta}$, where the modulus of continuity of $\overline{\psi}^{\delta}$ and $\overline{\psi}_{\delta}$ are independent of $\delta$. Therefore, since $\Psi\in C^{1+\alpha/2,\,2+\alpha}(\overline{Q^{0}})$, we have
\begin{align*}
a_{\delta}:=\sup_{(t,x)\in Q_{\delta}\cap\partial_{\rm np}Q}\left|\widehat{W}_{\delta}(t,x)-W(t,x)\right|=o_{\delta}(1),\quad \text{as }\delta\rightarrow 0.
\end{align*}
Since $\widehat{W}_{\delta}-a_{\delta}$ and $\widehat{W}_{\delta}+a_{\delta}$ are viscosity solutions to \eqref{eq:HJB}, and since
\begin{align*}
\widehat{W}_{\delta}-a_{\delta}\leq W\leq \widehat{W}_{\delta}+a_{\delta}\quad\text{on }\,\partial_{\text{np}}Q,
\end{align*}
the lemma follows immediately from the comparison principle.
\end{proof}

We can now state a representation formula for $W$ with smooth terminal-boundary condition and a general control set.
\begin{theorem}\label{thm:DPVisSmoothTermBound}
Let Assumptions \ref{assump:SDECoefs}, \ref{assump:GammaPsi}, \ref{assump:Domain O}, and \ref{assump:ellpiticity along boundary} be valid, and let $\Psi\in C^{1+\alpha/2,\,2+\alpha}(\overline{Q^{0}})$ for some $\alpha>0$. Let $W$ be the viscosity solution to \eqref{eq:HJB} with terminal-boundary condition \eqref{eq:TermBoundCondHJB}. Let $t\in[0,T]$, let $\mu_{1}=(\Omega,\mathscr{F},\mathscr{F}_{s}^{t},\mathbb{P},\mathcal{W},\widetilde{\mathcal{W}},\mathcal{L})$ be an extended generalized reference probability space, and set $\mu=(\Omega,\mathscr{F},\mathscr{F}_{s}^{t},\mathbb{P},\mathcal{W},\mathcal{L})$. Then, for any $x\in\overline{O}$,
\begin{align}\label{eq:DPmain result}
W(t,x)=\inf_{U\in\mathcal{A}_{\mu}}\mathbb{E}\left(\int_{t}^{\tau(t,x)}\Gamma\left(s,X(s;t,x),U(s)\right)ds+W\left(\tau(t,x),X(\tau(t,x);t,x)\right)\right).
\end{align}
\end{theorem}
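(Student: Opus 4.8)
The plan is to obtain \eqref{eq:DPmain result} by letting $\delta\rightarrow 0$ in the representation formula for $\widehat{W}_{\delta}$ proved in Theorem \ref{thm:DPWdelta}. Fix $t\in[0,T)$, an extended generalized reference probability space $\mu_{1}=(\Omega,\mathscr{F},\mathscr{F}_{s}^{t},\mathbb{P},\mathcal{W},\widetilde{\mathcal{W}},\mathcal{L})$ and the associated $\mu=(\Omega,\mathscr{F},\mathscr{F}_{s}^{t},\mathbb{P},\mathcal{W},\mathcal{L})$, and fix $x\in\overline{O}$. The key structural point is that the exit time $\tau(t,x)$ appearing on the right-hand side of \eqref{eq:DPWdelta} is the exit time from $Q$ (not from $Q_{\delta}$) and therefore does not depend on $\delta$; the only $\delta$-dependence in \eqref{eq:DPWdelta} is through the function $\widehat{W}_{\delta}$ itself. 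Moreover, since $\left(\tau(t,x),X(\tau(t,x);t,x)\right)\in\partial_{\rm np}Q\subset\overline{Q^{0}}$ $\mathbb{P}$-a.s. by the very definition of $\tau$, both $\widehat{W}_{\delta}$ and $W$ are always evaluated at points of $\overline{Q^{0}}$.

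Next I would invoke Lemma \ref{lem:Comparison}, which gives $\|W-\widehat{W}_{\delta}\|_{L^{\infty}(\overline{Q^{0}})}=o_{\delta}(1)$ as $\delta\rightarrow 0$. Since the running-cost integrals $\int_{t}^{\tau(t,x)}\Gamma\left(s,X(s;t,x),U(s)\right)ds$ are identical in the two formulas, for every $U\in\mathcal{A}_{\mu}$ the cost functionals with terminal values $\widehat{W}_{\delta}$ and $W$ differ by at most $\|W-\widehat{W}_{\delta}\|_{L^{\infty}(\overline{Q^{0}})}$; taking the infimum over $U\in\mathcal{A}_{\mu}$ and using Theorem \ref{thm:DPWdelta} yields
\begin{align*}
\left|\widehat{W}_{\delta}(t,x)-\inf_{U\in\mathcal{A}_{\mu}}\mathbb{E}\left(\int_{t}^{\tau}\Gamma\left(s,X(s),U(s)\right)ds+W\left(\tau,X(\tau)\right)\right)\right|\leq\|W-\widehat{W}_{\delta}\|_{L^{\infty}(\overline{Q^{0}})}.
\end{align*}
Combining this with the trivial bound $|\widehat{W}_{\delta}(t,x)-W(t,x)|\leq\|W-\widehat{W}_{\delta}\|_{L^{\infty}(\overline{Q^{0}})}$ gives
\begin{align*}
\left|W(t,x)-\inf_{U\in\mathcal{A}_{\mu}}\mathbb{E}\left(\int_{t}^{\tau}\Gamma\left(s,X(s),U(s)\right)ds+W\left(\tau,X(\tau)\right)\right)\right|\leq 2\,\|W-\widehat{W}_{\delta}\|_{L^{\infty}(\overline{Q^{0}})}=o_{\delta}(1),
\end{align*}
and letting $\delta\rightarrow 0$ and applying Lemma \ref{lem:Comparison} once more produces \eqref{eq:DPmain result}. (Here I use the convention of suppressing the initial data $(t,x)$ in the solutions and exit times, exactly as in Section \ref{sec:preliminary}; recall also from Remark \ref{rem:EquiProbSpaces} that $\mathcal{A}_{\mu}$ is unchanged when passing between $\mu$ and $\mu_{1}$.)

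I do not expect a serious obstacle at this final step: the substantial analytic work has already been done in the preceding subsections — the existence, uniqueness and regularity theory for the non-local equations, the construction of the approximating equations with finite control sets and smoother coefficients, the exit-time estimates near the enlarged boundary, and the dynamic programming principle for $\widehat{W}_{\delta}$ in Theorem \ref{thm:DPWdelta}. The only point deserving a moment's care is to confirm that the terminal term is always evaluated at a point of $\partial_{\rm np}Q\subset\overline{Q^{0}}$, so that the uniform sup-norm estimate of Lemma \ref{lem:Comparison} applies verbatim; this is automatic from the definition of $\tau(t,x)$. Since the bound above is uniform in $(t,x)\in[0,T)\times\overline{O}$, the representation formula in fact holds on that whole set, and $\widehat{W}_{\delta}\rightarrow W$ uniformly there.
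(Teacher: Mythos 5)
Your proof is correct and follows exactly the paper's own argument: the paper's proof is just the one-line statement ``the result follows by taking $\delta\rightarrow 0$ in \eqref{eq:DPWdelta} and using Lemma \ref{lem:Comparison},'' and you have filled in precisely the details that statement compresses, including the key observation that $\tau(t,x)$ and the running-cost integral in \eqref{eq:DPWdelta} do not depend on $\delta$, so the sup-norm estimate from Lemma \ref{lem:Comparison} transfers directly to the infima.
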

\begin{proof}
The result follows by taking $\delta\rightarrow 0$ in \eqref{eq:DPWdelta} and using Lemma \ref{lem:Comparison}.
\end{proof}

Finally, we show that $\Psi\in C^{1+\alpha/2,\,2+\alpha}(\overline{Q^{0}})$ is not needed for establishing the representation formula for $W$. In fact, we only need Assumption \ref{assump:GammaPsi}-(i).
\begin{theorem}\label{thm:DPVisContTermBound}
Let Assumptions \ref{assump:SDECoefs}, \ref{assump:GammaPsi}, \ref{assump:Domain O}, and \ref{assump:ellpiticity along boundary} be valid. Let $W$ be the viscosity solution to \eqref{eq:HJB} with terminal-boundary condition \eqref{eq:TermBoundCondHJB}. Let $t\in[0,T]$, let $\mu_{1}=(\Omega,\mathscr{F},\mathscr{F}_{s}^{t},\mathbb{P},\mathcal{W},\widetilde{\mathcal{W}},\mathcal{L})$ be an extended generalized reference probability space, and set $\mu=(\Omega,\mathscr{F},\mathscr{F}_{s}^{t},\mathbb{P},\mathcal{W},\mathcal{L})$. Then, \eqref{eq:DPmain result} holds for any $x\in\overline{O}$.
\end{theorem}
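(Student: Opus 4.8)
The plan is to reduce to the smooth‑data case (Theorem \ref{thm:DPVisSmoothTermBound}) by mollifying $\Psi$, and to absorb the error coming from the fact that this approximation is only \emph{locally} uniform (it cannot be expected to be globally uniform, since $\Psi$ need not be uniformly continuous on the unbounded set $\overline{Q^0}$) by a tail estimate for the exit point $X(\tau)$ derived from the second moment bound \eqref{eq:Est2ndMoment}.

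First, after extending $\Psi$ continuously in $t$ to a neighbourhood of $[0,T]$ and mollifying in all variables, I would fix functions $\Psi_j\in C^{1+\alpha/2,\,2+\alpha}(\overline{Q^0})$ (for every $\alpha\in(0,1)$, since $\Psi_j$ is smooth with bounded derivatives of every order) such that $\|\Psi_j\|_{L^\infty(\overline{Q^0})}\le\|\Psi\|_{L^\infty(\overline{Q^0})}$ and $\|\Psi_j-\Psi\|_{L^\infty([0,T]\times\overline{B}_{R})}\to 0$ as $j\to\infty$ for every $R>0$. By the existence and comparison results for the parabolic Dirichlet problem recalled in Section \ref{sec:DP} (valid for bounded continuous data and a general Polish control set; see \cite{BarlesImbert:2008,JakobsenKarlsen:2006,Mou:2017,Mou1:2016}), there is a unique viscosity solution $W_j\in C_b(\overline{Q^0})$ of \eqref{eq:HJB} with $W_j=\Psi_j$ on $\partial_{\rm np}Q$, and comparison with the super/subsolutions $\pm\big(\|\Psi\|_{L^\infty(\overline{Q^0})}+(T-t)\|\Gamma\|_{L^\infty(\overline{Q^0}\times\mathcal{U})}\big)$ gives $\sup_j\|W_j\|_{L^\infty(\overline{Q^0})}<\infty$. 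Since $\Psi_j$ is smooth enough, Theorem \ref{thm:DPVisSmoothTermBound} applies to $W_j$, and, using that $(\tau,X(\tau))\in\partial_{\rm np}Q$ so that $W_j(\tau,X(\tau))=\Psi_j(\tau,X(\tau))$, it yields, for every $x\in\overline O$,
\begin{equation*}
W_j(t,x)=\inf_{U\in\mathcal{A}_\mu}\mathbb{E}\left(\int_t^{\tau(t,x)}\Gamma\left(s,X(s;t,x),U(s)\right)ds+\Psi_j\left(\tau(t,x),X(\tau(t,x);t,x)\right)\right).
\end{equation*}

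Next I would let $j\to\infty$. With $D:=\sup_{y\in\overline O}|y|$, estimate \eqref{eq:Est2ndMoment} gives $\mathbb{P}(|X(\tau)|>R)\le K_1(1+D^2)/R^2=:p(R)$, uniformly over $x\in\overline O$, over $U\in\mathcal{A}_\mu$ and over $\mu$, with $p(R)\to 0$ as $R\to\infty$; hence for every $R>0$,
\begin{equation*}
\sup_{U\in\mathcal{A}_\mu}\mathbb{E}\left|\Psi_j\left(\tau,X(\tau)\right)-\Psi\left(\tau,X(\tau)\right)\right|\le\left\|\Psi_j-\Psi\right\|_{L^\infty([0,T]\times\overline{B}_{R})}+2\|\Psi\|_{L^\infty(\overline{Q^0})}\,p(R),
\end{equation*}
which tends to $0$ on letting first $j\to\infty$ and then $R\to\infty$. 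Since $|\inf_U f-\inf_U g|\le\sup_U|f-g|$ and the running‑cost term is unchanged, $W_j(t,x)$ converges, uniformly on $[0,T]\times\overline O$, to $V(t,x):=\inf_{U\in\mathcal{A}_\mu}\mathbb{E}\big(\int_t^{\tau}\Gamma(s,X(s;t,x),U(s))\,ds+\Psi(\tau,X(\tau))\big)$; on $\partial_{\rm np}Q$ we have $W_j=\Psi_j\to\Psi$ locally uniformly. Thus $W_j\to W_\infty$ locally uniformly on $\overline{Q^0}$, where $W_\infty:=V$ on $[0,T]\times\overline O$ and $W_\infty:=\Psi$ on $\partial_{\rm np}Q$ (the two definitions agree on $\{T\}\times\overline O$), and $W_\infty\in C_b(\overline{Q^0})$. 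By the standard stability of viscosity solutions for nonlocal equations, $W_\infty$ is a viscosity solution of \eqref{eq:HJB}, and it equals $\Psi$ on $\partial_{\rm np}Q$; by the comparison principle the bounded continuous viscosity solution of \eqref{eq:HJB}$-$\eqref{eq:TermBoundCondHJB} is unique, so $W_\infty=W$. Consequently, for every $x\in\overline O$, $W(t,x)=V(t,x)$, which, recalling that $W=\Psi$ on $\partial_{\rm np}Q$ and $(\tau,X(\tau))\in\partial_{\rm np}Q$, is exactly \eqref{eq:DPmain result}. The only non‑routine step is this last identification $\lim_j W_j=W$: because $\Psi_j\to\Psi$ only locally uniformly, it cannot be read off from a comparison between $W_j$ and $W$ directly, and one must instead combine the exit‑point tail estimate (to get convergence of the $W_j$) with the stability and uniqueness of viscosity solutions (to identify the limit).
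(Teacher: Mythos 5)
Your proof takes a genuinely different route from the paper's. The paper's proof chooses $\Psi_n\in C^{1+\alpha/2,\,2+\alpha}(\overline{Q^{0}})$ with $\Psi_n\to\Psi$ \emph{uniformly} on $\overline{Q^{0}}$, obtains $\|W_n-W\|_{L^\infty(\overline{Q^{0}})}\to 0$ directly from the comparison principle (the equation has no zeroth order term, so $W_n\pm\|\Psi_n-\Psi\|_{L^\infty(\overline{Q^{0}})}$ are a super/subsolution pair for $W$), and then passes to the limit in the representation formula from Theorem~\ref{thm:DPVisSmoothTermBound}. You instead approximate only \emph{locally} uniformly, use the second moment estimate~\eqref{eq:Est2ndMoment} to bound the tail of the exit point $X(\tau)$ and obtain convergence of the right-hand side uniformly in $(t,x,U)$ over $[0,T]\times\overline{O}$, and then re-identify the limit $W_\infty$ with $W$ via stability of viscosity solutions and the comparison/uniqueness theorem. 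Your route is actually more robust: as you point out, globally uniform smooth approximation on the unbounded set $\overline{Q^{0}}$ is only available when $\Psi$ is uniformly continuous, whereas Assumption~\ref{assump:GammaPsi}(i) gives only bounded continuity, so the paper's opening step ``let $\Psi_n\to\Psi$ uniformly in $\overline{Q^{0}}$'' is not automatic; your tail-estimate-plus-uniqueness argument sidesteps this. The price you pay is an appeal to stability under locally uniform convergence for a nonlocal equation; with Definition~\ref{def:VisSol} (where the test function carries the entire nonlocal term over $Q^{0}$) this needs a small perturbation of the test function near infinity so that near-maxima of $W_j-\varphi$ converge to a global max, but that is a standard device and is harmless here given the uniform bound $\sup_j\|W_j\|_{L^\infty(\overline{Q^{0}})}<\infty$ that you establish. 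In summary, your proof is correct, differs from the paper's in the mechanism used to pass from smooth to merely continuous $\Psi$, and repairs an implicit gap in the paper's argument.
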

\begin{proof}
For each $n\in\mathbb{N}$, let $\Psi_{n}\in C^{1+\alpha/2,\,2+\alpha}(\overline{Q^{0}})$, for some small $\alpha>0$, be such that $\Psi_{n}\rightarrow\Psi$ uniformly in $\overline{Q^{0}}$ as $n\rightarrow\infty$. Also, for each $n\in\mathbb{N}$, let $W_{n}$ be the viscosity solution to \eqref{eq:HJB} with $W=\Psi_{n}$ on $\partial_{\text{np}}Q$. By Theorem \ref{thm:DPVisSmoothTermBound},
\begin{align*}
W_{n}(t,x)=\inf_{U\in\mathcal{A}_{\mu}}\mathbb{E}\left(\int_{t}^{\tau(t,x)}\Gamma\left(s,X(s;t,x),U(s)\right)ds+W_{n}\left(\tau
(t,x),X(\tau(t,x);t,x)\right)\right).
\end{align*}
A comparison argument like the one used to prove Lemma \ref{lem:Comparison} ensures that $W_{n}\rightarrow W$ uniformly in $\overline{Q^{0}}$ as $n\rightarrow\infty$. Taking limits on both sides of the above quality, as $n\rightarrow\infty$, completes the proof.
\end{proof}

We conclude this section with a remark and a corollary.

\begin{remark}\label{rem:ImprovDPFgeneral}
Straightforward modifications of arguments of this section also establish the following version of the Dynamic Programming Principle. If the assumptions of Theorem \ref{thm:DPVisContTermBound} are satisfied, $t\in[0,T]$ and a generalized reference probability space $\mu$ is as in Theorem \ref{thm:DPVisContTermBound}, then for any $x\in\overline{O}$
\begin{equation*}
W(t,x)=\inf_{(U,\theta_{U})\in\widetilde{\mathcal{A}}_{\mu}}\mathbb{E}\left(\int_{t}^{\theta_{U}\wedge\tau}\Gamma\left(s,X(s;t,x),U(s)\right)ds+W\left(\theta_{U}\wedge\tau,X(\theta_{U}\wedge\tau;t,x)\right)\right).
\end{equation*}
\end{remark}
\begin{corollary}\label{cor:final}
Under the assumptions of Theorem \ref{thm:DPVisContTermBound}, for any $(t,x)\in \overline{Q}$,
\begin{equation*}
W(t,x)=\inf_{(U,\theta_{U})\in\mathcal{A}_{t}}\mathbb{E}\left(\int_{t}^{\theta_{U}\wedge\tau}\Gamma\left(s,X(s;t,x),U(s)\right)ds+W\left(\theta_{U}\wedge\tau,X(\theta_{U}\wedge\tau;t,x)\right)\right).
\end{equation*}
In particular, taking $\theta_{U}=T$ for every $U$, we obtain that, for any $(t,x)\in \overline{Q}$,
\begin{equation*}
W(t,x)=\inf_{U\in\mathcal{A}_{t}}\mathbb{E}\left(\int_{t}^{\tau}\Gamma\left(s,X(s;t,x),U(s)\right)ds+W\left(\tau,X(\tau;t,x)\right)\right).
\end{equation*}
\end{corollary}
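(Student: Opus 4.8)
The plan is to deduce the corollary from Remark~\ref{rem:ImprovDPFgeneral} together with the identification $\mathcal{A}^{e}_{t}=\mathcal{A}_{t}$ of Remark~\ref{rem:EquiProbSpaces}. Remark~\ref{rem:ImprovDPFgeneral} already furnishes the asserted equality when the infimum is restricted to pairs $(U,\theta_{U})\in\widetilde{\mathcal{A}}_{\mu}$ for a \emph{single} $\mu$ arising from an extended generalized reference probability space; the only thing left is to check that enlarging the admissible class to $\widetilde{\mathcal{A}}_{t}=\cup_{\mu}\widetilde{\mathcal{A}}_{\mu}$, the union now taken over \emph{all} generalized reference probability spaces, does not change the value of the infimum. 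Throughout, $(t,x)\in\overline{Q}$ is fixed.

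First I would prove $W(t,x)\ge\inf_{(U,\theta_{U})\in\widetilde{\mathcal{A}}_{t}}\mathbb{E}(\cdots)$: pick any extended generalized reference probability space $\mu_{1}=(\Omega,\mathscr{F},\mathscr{F}_{s}^{t},\mathbb{P},\mathcal{W},\widetilde{\mathcal{W}},\mathcal{L})$, set $\mu=(\Omega,\mathscr{F},\mathscr{F}_{s}^{t},\mathbb{P},\mathcal{W},\mathcal{L})$; then $W(t,x)=\inf_{(U,\theta_{U})\in\widetilde{\mathcal{A}}_{\mu}}\mathbb{E}(\cdots)$ by Remark~\ref{rem:ImprovDPFgeneral}, and since $\widetilde{\mathcal{A}}_{\mu}\subset\widetilde{\mathcal{A}}_{t}$ this dominates the infimum over $\widetilde{\mathcal{A}}_{t}$. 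For the reverse inequality, take an arbitrary generalized reference probability space $\mu=(\Omega,\mathscr{F},\mathscr{F}_{s}^{t},\mathbb{P},\mathcal{W},\mathcal{L})$ and a pair $(U,\theta_{U})\in\widetilde{\mathcal{A}}_{\mu}$, and use the construction in Remark~\ref{rem:EquiProbSpaces}: adjoin an independent $d$-dimensional Brownian motion $\widetilde{\mathcal{W}}$ on a product space to form an extended generalized reference probability space $\mu_{1}=(\Omega_{1},\overline{\mathscr{F}\otimes\widetilde{\mathscr{F}}},\mathscr{F}_{1,s}^{t},\mathbb{P}\otimes\widetilde{\mathbb{P}},\mathcal{W}_{1},\mathcal{W}_{2},\mathcal{L}_{1})$, and let $\mu'=(\Omega_{1},\overline{\mathscr{F}\otimes\widetilde{\mathscr{F}}},\mathscr{F}_{1,s}^{t},\mathbb{P}\otimes\widetilde{\mathbb{P}},\mathcal{W}_{1},\mathcal{L}_{1})$ be its underlying generalized reference probability space. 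Under the canonical embedding $\omega\mapsto(\omega,\widetilde{\omega})$ the processes $\mathcal{W}_{1},\mathcal{L}_{1}$ coincide with $\mathcal{W},\mathcal{L}$, so the strong solution $X(\cdot\,;t,x)$ of \eqref{eq:MainSDEs} and the exit time $\tau(t,x)$ are unchanged, $U$ remains $\mathscr{F}_{1,s}^{t}$-predictable, $\theta_{U}$ remains an $\mathscr{F}_{1,s}^{t}$-stopping time, hence $(U,\theta_{U})\in\widetilde{\mathcal{A}}_{\mu'}$ and the cost $\mathbb{E}\big(\int_{t}^{\theta_{U}\wedge\tau}\Gamma(s,X(s),U(s))\,ds+W(\theta_{U}\wedge\tau,X(\theta_{U}\wedge\tau))\big)$ has the same value computed on $\mu$ or on $\mu'$. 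Since $\mu'$ comes from the extended space $\mu_{1}$, Remark~\ref{rem:ImprovDPFgeneral} applies to it and yields $W(t,x)\le\mathbb{E}(\cdots)$ for this pair; taking the infimum over all such $\mu$ and $(U,\theta_{U})$ gives $W(t,x)\le\inf_{(U,\theta_{U})\in\widetilde{\mathcal{A}}_{t}}\mathbb{E}(\cdots)$, and with the previous inequality this proves the first displayed identity.

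The ``in particular'' assertion follows by repeating the same two-sided argument with the constant stopping time $\theta_{U}\equiv T$ throughout: since $\tau\le T$ a.s., $\theta_{U}\wedge\tau=\tau$, and the input is now Theorem~\ref{thm:DPVisContTermBound} (valid for $\mu$ coming from extended spaces) in place of Remark~\ref{rem:ImprovDPFgeneral}; alternatively, since $(U,T)\in\widetilde{\mathcal{A}}_{t}$ for every $U\in\mathcal{A}_{t}$, the first identity already forces $W(t,x)\le\inf_{U\in\mathcal{A}_{t}}\mathbb{E}\big(\int_{t}^{\tau}\Gamma(s,X(s),U(s))\,ds+W(\tau,X(\tau))\big)$, while the embedding argument applied to Theorem~\ref{thm:DPVisContTermBound} supplies the matching lower bound. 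I do not expect a genuine obstacle: the content is bookkeeping among the classes $\mathcal{A}_{\mu}$, $\mathcal{A}_{\mu_{1}}$, $\widetilde{\mathcal{A}}_{\mu}$, $\widetilde{\mathcal{A}}_{t}$, and the only point needing care is verifying that passing to the enlarged probability space of Remark~\ref{rem:EquiProbSpaces} leaves the solution of \eqref{eq:MainSDEs}, the exit time, the predictability of $U$, the stopping-time property of $\theta_{U}$, and hence the cost functional, all unaffected, so that the identities already proved for extended-type probability spaces transfer verbatim to arbitrary generalized reference probability spaces.
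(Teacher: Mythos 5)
Your proof is correct and takes essentially the same approach as the paper, whose proof is a single sentence invoking Remarks~\ref{rem:EquiProbSpaces} and~\ref{rem:ImprovDPFgeneral}; you have simply filled in the bookkeeping (the product-space embedding preserving the solution of \eqref{eq:MainSDEs}, the exit time, predictability of $U$, the stopping-time property of $\theta_U$, and hence the cost) that the paper leaves implicit.
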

\begin{proof}
The corollary follows from Remarks \ref{rem:EquiProbSpaces} and \ref{rem:ImprovDPFgeneral}.
\end{proof}

\section{Construction of Viscosity Sub/Supersolutions}\label{sec:VisConstrn}

In this section, we construct continuous sub/supersolutions to various equations. We will only discuss the case of equation \eqref{eq:HJB} with all details since the construction for other equations is the same as they satisfy the same uniform conditions. The construction of sub/supersolutions is very similar, and essentially is the same as that in \cite{Mou1:2016} in many respects. We present it here for the sake of completeness.

We begin with a preliminary lemma for which we need the following assumption.
\begin{assumption}\label{assump:Domain O exterior}
\begin{itemize}
\item [(i)] $\mathscr{O}\subset\mathbb{R}^{d}$ is a bounded domain which satisfies the uniform exterior ball condition with a uniform radius $r_{\mathscr{O}}>0$. That is, for any $x\in\partial \mathscr{O}$, there exists $y_{x}\in \mathscr{O}^{c}$, such that $\overline{B}_{r_{\mathscr{O}}}(y_{x})\cap\overline{\mathscr{O}}=\{x\}$.
\item [(ii)] There exists a constant $\lambda_{\mathscr{O}}>0$ such that, for any $x\in\partial \mathscr{O}$, $t\in[0,T]$, and any $u\in\mathcal{U}$,
    \begin{align*}
    \frac{(y_{x}-x)}{|y_{x}-x|}\sigma(t,x,u)\sigma^{T}(t,x,u)\frac{(y_{x}-x)^{T}}{|y_{x}-x|}\geq \lambda_{\mathscr{O}}.
    \end{align*}
\end{itemize}
\end{assumption}

By Theorem \ref{thm:proxdomain}, under Assumptions \ref{assump:Domain O} and \ref{assump:ellpiticity along boundary}, $\mathscr{O}=O_{\delta}$ satisfies Assumption \ref{assump:Domain O exterior} with some $r_{\mathscr{O}}$ and $\lambda_{\mathscr{O}}$ independent of $\delta$. Also $\mathscr{O}=O$ satisfies Assumption \ref{assump:Domain O exterior}.

\begin{lemma}\label{lem:barfunbdy}
Let $\mathscr{O}$ be a bounded domain with smooth boundary $\partial\mathscr{O}$. Let $\Gamma:\overline{Q^{0}}\times\mathcal{U}\rightarrow\mathbb{R}$, $b:\overline{Q^{0}}\times\mathcal{U}\rightarrow\mathbb{R}^{d}$, and $\sigma:\overline{Q^{0}}\times\mathcal{U}\rightarrow\mathbb{R}^{d\times m_{1}}$ be bounded, and let $\gamma:\overline{Q^{0}}\times\mathcal{U}\times\mathbb{R}^{m_{2}}\rightarrow\mathbb{R}^{d}$ be $\mathcal{B}(\mathbb{R}^{m_{2}})$-measurable with respect to $z$ and satisfy \eqref{eq:inftybounds}. Let Assumption \ref{assump:Domain O exterior} be valid. Then, there exist $\delta_{0}\in(0,1)$, $\kappa>0$, and a Lipschitz function $\psi:\mathbb{R}^{d}\rightarrow [0,\infty)$ satisfying
\begin{align*}
\psi=0\,\,\,\text{on }\,\mathscr{O}^{c}\,;\quad\psi\geq\kappa\,\,\,\text{on }\,\overline{\mathscr{O}}_{-\delta_{0}};\quad\psi\in C^{2}(\mathscr{O}\setminus\overline{\mathscr{O}}_{-\delta_{0}}),
\end{align*}
where $\mathscr{O}_{-\delta_{0}}:=\{x\in\mathscr{O}:\,{\rm dist}(x,\partial\mathscr{O})>\delta_{0}\}$, such that for any $(t,x)\in(0,T)\times(\mathscr{O}\setminus\overline{\mathscr{O}}_{-\delta_{0}})$ and $u\in\mathcal{U}$,
\begin{align*}
\mathscr{A}^{u}\psi(t,x)\leq -\kappa,
\end{align*}
where the generator $\mathscr{A}^{u}$ is given by \eqref{eq:GenX}.
\end{lemma}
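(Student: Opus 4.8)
The plan is to adapt the classical distance-function barrier of the second-order theory to the present nonlocal setting. Write $d(x):=\mathrm{dist}(x,\partial\mathscr{O})$, which coincides with $\mathrm{dist}(x,\mathscr{O}^{c})$ on $\mathscr{O}$; since $\partial\mathscr{O}$ is smooth, $d\in C^{2}(\{0<d<\delta_{0}\})$ for $\delta_{0}$ small, and by the uniform exterior ball condition of Assumption~\ref{assump:Domain O exterior} one may take $\delta_{0}<1$ and the bound on $\|D^{2}d\|$ to depend only on $r_{\mathscr{O}}$ (so that, in the later applications to approximating smooth domains, the constants are uniform). Fix a nondecreasing $\phi:\mathbb{R}\to[0,\infty)$ which is $C^{2}$ and strictly concave on $(0,\delta_{0})$ with $\phi\equiv0$ on $(-\infty,0]$, $\phi'(0^{+})>0$, $\phi'(\delta_{0}^{-})>0$, and $\phi\equiv\phi(\delta_{0})$ on $[\delta_{0},\infty)$; a convenient choice is $\phi(s)=\beta(1-e^{-As})$ on $[0,\delta_{0}]$, with $A$ large and then $\delta_{0}\le 1/A$. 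Define $\psi:=\phi\circ d$ on $\overline{\mathscr{O}}$ and $\psi:=0$ on $\mathscr{O}^{c}$. Then $\psi$ is Lipschitz on $\mathbb{R}^{d}$, vanishes on $\mathscr{O}^{c}$, equals $\phi(\delta_{0})$ on $\overline{\mathscr{O}}_{-\delta_{0}}$, and is $C^{2}$ on $\mathscr{O}\setminus\overline{\mathscr{O}}_{-\delta_{0}}$, so the three non-differential properties hold with any $\kappa\le\phi(\delta_{0})$.

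To prove $\mathscr{A}^{u}\psi\le-\kappa$ on $(0,T)\times(\mathscr{O}\setminus\overline{\mathscr{O}}_{-\delta_{0}})$ I would first reduce to a family of single-exterior-ball barriers. For $\zeta\in\partial\mathscr{O}$ let $y_{\zeta}$ be the exterior-ball center and set $\psi_{\zeta}(x):=\phi(|x-y_{\zeta}|-r_{\mathscr{O}})$ on $\mathbb{R}^{d}$. Using that the nearest point $p(x)\in\partial\mathscr{O}$ is unique in the collar and that $x$, $p(x)$, $y_{p(x)}$ are collinear, one checks that $d(x)=\inf_{\zeta\in\partial\mathscr{O}}\bigl(|x-y_{\zeta}|-r_{\mathscr{O}}\bigr)$ on $\mathscr{O}$, and since $\phi$ is nondecreasing this gives $\psi=\inf_{\zeta}\psi_{\zeta}$ on $\mathscr{O}$ and $\psi\le\psi_{\zeta}$ everywhere (as $\psi=0\le\psi_{\zeta}$ on $\mathscr{O}^{c}$). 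Fixing $x$ in the collar and $\zeta:=p(x)$, the functions $\psi$ and $\psi_{\zeta}$ are $C^{2}$ near $x$, agree there to first order, and satisfy $D^{2}\psi(x)\preceq D^{2}\psi_{\zeta}(x)$ together with $\psi(\cdot)\le\psi_{\zeta}(\cdot)$; since $a=\sigma\sigma^{T}\succeq0$ and the operators are $t$-independent, comparing \eqref{eq:GenX} term by term yields $\mathscr{A}^{u}\psi(x)\le\mathscr{A}^{u}\psi_{\zeta}(x)$, and it remains to bound $\mathscr{A}^{u}\psi_{\zeta}(x)$ uniformly in $x$ (collar), $t$, $u$, $\zeta$.

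For the local part, with $e:=\tfrac{x-y_{\zeta}}{|x-y_{\zeta}|}$ one has $D\psi_{\zeta}(x)=\phi'(d(x))e$ and $D^{2}\psi_{\zeta}(x)=\phi''(d(x))ee^{T}+\tfrac{\phi'(d(x))}{|x-y_{\zeta}|}(I-ee^{T})$; since $-e=\tfrac{y_{\zeta}-\zeta}{|y_{\zeta}-\zeta|}\in N(\mathscr{O},\zeta)$, Assumption~\ref{assump:Domain O exterior}(ii) together with the Lipschitz dependence of $\sigma$ on $x$ (Assumption~\ref{assump:SDECoefs}) gives $e^{T}a(t,x,u)e\ge\lambda_{\mathscr{O}}/2$ once $\delta_{0}$ is small, so the $\phi''$-term is $\le-\tfrac{\lambda_{\mathscr{O}}}{4}|\phi''(d(x))|$; with $\phi''=-A\phi'$ this, after taking $A$ large, dominates the drift term and the $\phi'$-curvature term and makes the local part $\le-c_{2}\phi'(d(x))\le -c_{2}\phi'(\delta_{0})$ with $c_{2}$ of size $\sim A\lambda_{\mathscr{O}}$. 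For the nonlocal part $I[\psi_{\zeta}](x):=\int(\psi_{\zeta}(x+\gamma)-\psi_{\zeta}(x)-D\psi_{\zeta}(x)\cdot\gamma)\,\nu(dz)$, I would extend $\phi$ to a globally $C^{1,1}$, concave, nondecreasing $\Phi$ (affine for negative arguments) so that $\psi_{\zeta}=(\Psi_{\zeta})^{+}$ with $\Psi_{\zeta}:=\Phi(|\cdot-y_{\zeta}|-r_{\mathscr{O}})$, where $\Psi_{\zeta}$ is semiconcave on $\{|\cdot-y_{\zeta}|\ge r_{\mathscr{O}}/2\}$ with constant controlled by $r_{\mathscr{O}}$ and $\|\Phi'\|_{\infty}$; then split the integral into the jumps with $|\gamma(t,x,u,z)|\le r_{\mathscr{O}}/4$ (the segment $[x,x+\gamma]$ stays in that region, and, exploiting that the $\phi''$-contribution to the second-order increment is $\le0$ together with the $(\Psi_{\zeta})^{+}$-representation, the integrand is controlled by $C'\,|\gamma|^{2}\le C'C^{2}\rho(z)^{2}$, which is $\nu$-integrable since $M=\int\rho^{2}\,\nu<\infty$) and the jumps with $|\gamma|>r_{\mathscr{O}}/4$, hence $\rho(z)>r_{\mathscr{O}}/(4C)$ (finite $\nu$-mass, $\psi_{\zeta}$ bounded by $\phi(\delta_{0})$, and $\int_{\{\rho>r_{\mathscr{O}}/(4C)\}}\rho\,\nu\le(4C/r_{\mathscr{O}})M<\infty$). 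This yields $I[\psi_{\zeta}](x)\le C_{4}$, whence $\mathscr{A}^{u}\psi(x)\le-c_{2}\phi'(\delta_{0})+C_{4}$; fixing $A$ (and then $\delta_{0}$) so large that $c_{2}\phi'(\delta_{0})\ge C_{4}+\phi(\delta_{0})$ and setting $\kappa:=\phi(\delta_{0})$ finishes the proof. The main obstacle is precisely this nonlocal estimate: the barrier necessarily has a gradient jump across $\partial\mathscr{O}$ (forced by the requirement $\phi''\ll0$ there, needed to beat the drift), so jumps crossing the boundary see only a first-order difference of $\psi$; keeping the resulting integral bounded uniformly down to $\partial\mathscr{O}$ is the delicate point, and it is exactly here that the exterior-ball geometry (which both confines the crossing jumps and renders the model barriers $\psi_{\zeta}$ semiconcave away from $y_{\zeta}$) and the quantitative integrability of the jump coefficient through $\rho$ are used in an essential way.
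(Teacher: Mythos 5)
Your overall scaffolding — a concave profile composed with the distance to $\partial\mathscr{O}$, reduction to exterior-ball comparison barriers $\psi_\zeta$, splitting the nonlocal integral into short and long jumps — is the right shape, and matches the strategy of the paper's proof. But the crucial nonlocal estimate, which you yourself flag as the delicate point, is not correct as stated, and the gap is exactly the one your choice of profile cannot close.

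The claim that $I[\psi_\zeta](x)\le C_4$ with $C_4$ independent of $d_{\mathscr{O}}(x)$ fails. Your short-jump integrand is \emph{not} $O(|\gamma|^2)$ once the segment $[x,x+\gamma]$ crosses $\partial B_{r_{\mathscr{O}}}(y_\zeta)$. The representation $\psi_\zeta=(\Psi_\zeta)^+$ introduces a \emph{convex} kink on $\partial B_{r_{\mathscr{O}}}(y_\zeta)$ (gradient jumps from $0$ to $\phi'(0^+)>0$ in the outward direction): $\Psi_\zeta$ is semiconcave there, but $(\Psi_\zeta)^+$ is not, and $(\Psi_\zeta)^+\ge\Psi_\zeta$ points the wrong way for an \emph{upper} bound. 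Concretely, take $d=1$, $\zeta=0$, $y_\zeta=-r_{\mathscr{O}}$, $x=\epsilon$ small, $\gamma=-2\epsilon$; then
\begin{align*}
\psi_\zeta(x+\gamma)-\psi_\zeta(x)-D\psi_\zeta(x)\cdot\gamma
= 0-\phi(\epsilon)+2\phi'(\epsilon)\epsilon \approx \phi'(0^+)\,\epsilon \sim |\gamma|,
\end{align*}
a first-order, not second-order, increment. In general, for a crossing jump the extra term $\psi_\zeta(x+\gamma)-\Psi_\zeta(x+\gamma)=|\Psi_\zeta(x+\gamma)|\le \phi'(0^+)\,(|\gamma|-d_{\mathscr{O}}(x))^+$ is only linear in $|\gamma|$, so the crossing part of the integral is of order
\begin{align*}
\int_{\{C\rho(z)>d_{\mathscr{O}}(x)\}}\rho(z)\,\nu(dz)=:\beta\big(d_{\mathscr{O}}(x)\big),
\end{align*}
and under Assumption~\ref{assump:SDECoefs} one only has $\int\rho^2\,\nu<\infty$, not $\int\rho\,\nu<\infty$ (e.g.\ $\rho(z)=|z|$, $\nu(dz)=|z|^{-m_2-\alpha}dz$ with $\alpha\in(1,2)$), so $\beta(t)\to\infty$ as $t\to0^+$. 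Since your profile $\phi(s)=\beta(1-e^{-As})$ is $C^2$ up to $s=0$, $\phi''$ is bounded, and the local term $\sim \lambda_{\mathscr{O}}\phi''(d_{\mathscr{O}}(x))$ stays bounded; no choice of $A$ or $\delta_0$ can make a bounded local term dominate an unbounded nonlocal one uniformly down to $\partial\mathscr{O}$.

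The paper's proof is structurally similar but builds this divergence into the profile. It takes $\psi=\Theta\circ d_{\mathscr{O}}$ with
\begin{align*}
\Theta(t)=\int_0^t 2\exp\!\Big(-Ls-L\int_0^s\beta(\theta)\,d\theta\Big)ds-t,
\qquad \beta(t)=\int_{\{C\rho(z)\ge t\}}\rho(z)\,\nu(dz),
\end{align*}
so that $\Theta''(t)=-L(1+\beta(t))\Theta'(t)$ diverges to $-\infty$ at precisely the rate $\beta$, while $\Theta'\ge 1/2$ near $0$. The nonlocal integral is split at the $x$-dependent threshold $|\gamma|\le d_{\mathscr{O}}(x)$ (so the segment never leaves $\mathscr{O}$ and the $C^2$/concavity bound $\lesssim\rho^2$ applies) versus $|\gamma|>d_{\mathscr{O}}(x)$, which is bounded by $K\beta(d_{\mathscr{O}}(x))$ via Lipschitzness; then $\tfrac{\lambda_{\mathscr{O}}}{2}\Theta''(d_{\mathscr{O}}(x))\le-\tfrac{\lambda_{\mathscr{O}}L}{4}\big(1+\beta(d_{\mathscr{O}}(x))\big)$ absorbs this \emph{with the same $\beta(d_{\mathscr{O}}(x))$ factor}, after taking $L$ large. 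To repair your argument, replace $\phi$ by such a $\Theta$ (or any profile with $\phi''\le -L(1+\beta)\phi'$) and split the nonlocal term at $|\gamma|\le d_{\mathscr{O}}(x)$ rather than at the fixed radius $r_{\mathscr{O}}/4$; the rest of your construction then goes through.
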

\begin{proof}
Let $d_{\mathscr{O}}(x)={\rm dist}(x,\mathscr{O}^{c})$, $x\in\mathbb{R}^{d}$. Since $\mathscr{O}$ has a smooth boundary, let $\delta_{1}>0$ be such that $d_{\mathscr{O}}(\cdot)\in C^{2}(\mathscr{D}_{2\delta_{1}})$, where, for any $r>0$, $\mathscr{D}_{r}:=\{x\in\overline{\mathscr{O}}:\,d_{\mathscr{O}}(x)<r\}$. Let
\begin{align*}
\beta(t):=\int_{\{C\rho(z)\geq t\}}\rho(z)\,\nu(dz),\quad\Theta(t):=\int_{0}^{t}2\exp\left(-Ls-L\int_{0}^{s}\beta(\theta)\,d\theta\right)ds-t,\quad t\geq 0,
\end{align*}
where $C>0$ is given in \eqref{eq:inftybounds}, and where $L>0$ will be determined later. Clearly, from the above construction, there exists a constant $t_{0}=t_{0}(C,L;\rho)>0$, depending on the constants $C$ and $L$ as well as the function $\rho$, such that for any $t\in(0,t_{0})$, $\Theta'(t)\geq 1/2$. Letting $\delta_{2}:=\min(t_{0},\delta_{1})/2$, we define $\psi:\mathbb{R}^{d}\rightarrow\mathbb{R}$ via
\begin{align*}
\psi(x):=\left\{\begin{array}{ll} \Theta(d_{\mathscr{O}}(x)),\quad &\text{if }\,d_{\mathscr{O}}(x)<\delta_{2}, \\ \Theta(\delta_{2}),\quad &\text{if }\,d_{\mathscr{O}}(x)\geq\delta_{2}, \end{array}\right.
\end{align*}
and set $\delta_{0}<\delta_{2}/4$ to be chosen later. It is easy to see that $\psi=0$ on $\mathscr{O}^{c}$, and that $\psi$ is a Lipschitz function on $\mathbb{R}^{d}$ with Lipschitz constant $1$. Moreover, it follows from the above definition of $\psi$ that there exists a constant $\kappa\in(0,1)$, such that $\psi\geq\kappa$ on $\mathscr{O}_{-\delta_{0}}$, and that $\psi\in C^{2}(\mathscr{O}\setminus\overline{\mathscr{O}}_{-\delta_{0}})$. For any $x\in\mathscr{O}\setminus\overline{\mathscr{O}}_{-\delta_{0}}$, we have
\begin{align*}
D\psi(x)=\Theta'(d_{\mathscr{O}}(x))Dd_{\mathscr{O}}(x),\quad D^{2}\psi(x)=\Theta'(d_{\mathscr{O}}(x))D^{2}d_{\mathscr{O}}(x)+\Theta''(d_{\mathscr{O}}(x))Dd_{\mathscr{O}}(x)\otimes Dd_{\mathscr{O}}(x).
\end{align*}
Since $d_{\mathscr{O}}(\cdot)\in C^{2}(\mathscr{D}_{2\delta_{1}})$, $\|D^{2}d_{\mathscr{O}}\|_{L^{\infty}(\overline{\mathscr{D}}_{\delta_{1}})}<\infty$. In the rest of the proof, we will denote by $K$ any generic constant (the constant $K$ may vary from one expression to another). Notice however that the constant $K$ will not depend $L$. We choose $\delta_{0}>0$ sufficiently small so that, by Assumption \ref{assump:Domain O exterior}-(ii), for any $u\in\mathcal{U}$ and $(t,x)\in(0,T)\times(\mathscr{O}\setminus\overline{\mathscr{O}}_{-\delta_{0}})$,
\begin{align}\label{eq:EstAuDrifVol}
\frac{1}{2}{\rm tr}\left(a(t,x,u)D^{2}\psi(x)\right)\leq K+\frac{\lambda_{\mathscr{O}}}{2}\,\Theta''(d_{\mathscr{O}}(x)),\quad |b(t,x,u)\cdot D\psi(x)|\leq K,
\end{align}
where we used the fact that $\Theta''(t)\leq 0$, for any $t\geq 0$, in the first inequality above. Moreover, for any fixed $u\in\mathcal{U}$ and $(t,x)\in(0,T)\times(\mathscr{O}\setminus\overline{\mathscr{O}}_{-\delta_{0}})$,
\begin{align}
&\int_{\mathbb{R}_{0}^{m_{2}}}\left(\psi(x+\gamma(t,x,u,z))-\psi(x)-D\psi(x)\cdot\gamma(t,x,u,z)\right)\nu(dz)\nonumber\\
&\quad =\int_{|\gamma(t,x,u,z)|\leq d_{\mathscr{O}}(x)}\left(\psi(x+\gamma(t,x,u,z))-\psi(x)-D\psi(x)\cdot\gamma(t,x,u,z)\right)\nu(dz)\nonumber\\
\label{eq:DecomAuInt} &\qquad\,+\int_{|\gamma(t,x,u,z)|>d_{\mathscr{O}}(x)}\left(\psi(x+\gamma(t,x,u,z))-\psi(x)-D\psi(x)\cdot\gamma(t,x,u,z)\right)\nu(dz).
\end{align}
Since $\Theta''(t)\leq 0$ for any $t\geq 0$, using \eqref{eq:inftybounds}, we have
\begin{align}
&\int_{|\gamma(t,x,u,z)|\leq d_{\mathscr{O}}(x)}\left(\psi(x+\gamma(t,x,u,z))-\psi(x)-D\psi(x)\cdot \gamma(t,x,u,z)\right)\nu(dz)\nonumber\\
&\quad =\int_{|\gamma(t,x,u,z)|\leq d_{\mathscr{O}}(x)}\int_{0}^{1}(1-\alpha)\gamma(t,x,u,z)D^{2}\psi(x+\alpha\gamma(t,x,u,z))\gamma^{T}(t,x,u,z)\,d\alpha\,\nu(dz)\nonumber\\
\label{eq:EstAuInt1} &\quad\leq K\int_{|\gamma(t,x,u,z)|\leq d_{\mathscr{O}}(x)}|\gamma(t,x,u,z)|^{2}\nu(dz)\leq K\int_{\mathbb{R}_{0}^{m_{2}}}\rho^{2}(z)\,\nu(dz)\leq K.
\end{align}
Furthermore, since $\psi$ is a Lipschitz function in $\mathbb{R}^{d}$, by \eqref{eq:inftybounds} again, we have
\begin{align}
&\int_{|\gamma(t,x,u,z)|>d_{\mathscr{O}}(x)}\left(\psi(x+\gamma(t,x,u,z))-\psi(x)-D\psi(x)\cdot\gamma(t,x,u,z)\right)\nu(dz)\nonumber\\
&\quad\leq\int_{C\rho(z)>d_{\mathscr{O}}(x)}\left|\psi(x+\gamma(t,x,u,z))-\psi(x)-D\psi(x)\cdot\gamma(t,x,u,z)\right|\nu(dz)\nonumber\\
\label{eq:EstAuInt2} &\quad\leq K\int_{C\rho(z)>d_{\mathscr{O}}(x)}\rho(z)\nu(dz)=K\beta(d_{\mathscr{O}}(x)).
\end{align}
Therefore, by combining \eqref{eq:EstAuDrifVol}-\eqref{eq:EstAuInt2}, for any $u\in\mathcal{U}$ and $(t,x)\in(0,T)\times(\mathscr{O}\setminus\overline{\mathscr{O}}_{\delta_{0}})$, we have
\begin{align*}
\mathscr{A}^{u}\,\psi(t,x)&\leq\frac{\lambda_{\mathscr{O}}}{2}\,\Theta''(d_{\mathscr{O}}(x))+K\left(\beta(d_{\mathscr{O}}(x))+1\right)\\
&\leq -\frac{\lambda_{\mathscr{O}}}{2}\,L\left(\beta(d_{\mathscr{O}}(x))+1\right)\Theta'(d_{\mathscr{O}}(x))+K\left(\beta(d_{\mathscr{O}}(x))+1\right)\\
&\leq -\frac{L}{4}\lambda_{\mathscr{O}}\left(\beta(d_{\mathscr{O}}(x))+1\right)+K\left(\beta(d_{\mathscr{O}}(x))+1\right)\\
&\leq -\beta(d_{\mathscr{O}}(x))-1\leq -1<-\kappa,
\end{align*}
where we set $L=4(K+1)/\lambda_{\mathscr{O}}$.
\end{proof}

\begin{lemma}\label{lem:barfun}
Let $\Psi\in C^{1+\alpha/2,\,2+\alpha}(\overline{Q^{0}})$, and let Assumptions \ref{assump:SDECoefs},
\ref{assump:GammaPsi}, \ref{assump:Domain O}, and \ref{assump:ellpiticity along boundary} be valid. Then, there exist a uniformly continuous viscosity subsolution $\underline{\psi}$ and a uniformly continuous viscosity supersolution $\overline{\psi}$ to \eqref{eq:HJB} such that, $\underline{\psi}=\overline{\psi}=\Psi$ on $\partial_{{\rm np}}Q$, and such that the modulus of continuity of $\underline{\psi}$ and $\overline{\psi}$ only depends on various absolute constants, $\eta,\lambda$ and $\Psi$.
\end{lemma}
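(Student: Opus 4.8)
The plan is to exhibit $\overline{\psi}$ and $\underline{\psi}$ as explicit perturbations of $\Psi$ obtained by gluing two elementary barriers: the boundary barrier $\psi$ produced by Lemma~\ref{lem:barfunbdy}, which makes $\mathscr{A}^u$ strictly negative in a thin layer inside $\partial O$, and the affine function $(T-t)$, whose image under $\mathscr{A}^u$ is a negative constant and which therefore controls the operator in the interior and at the terminal time. These two building blocks will be combined through a $\min$ (for the supersolution) and a $\max$ (for the subsolution), chosen so that the perturbation vanishes precisely on $\partial_{\text{np}}Q$.

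First I would record the quantitative input. Since $\Psi\in C^{1+\alpha/2,\,2+\alpha}(\overline{Q^0})\subset C^{1,2}_b(\overline{Q^0})$, the bounds $\|b\|_{\infty},\|\sigma\|_{\infty}\le C$, $|\gamma(t,x,u,z)|\le C\rho(z)$, $\int\rho^2\,d\nu=M<\infty$ together with a second-order Taylor estimate of the nonlocal integrand show that $\mathscr{A}^u\Psi$ is bounded on $\overline{Q^0}$, uniformly in $u$; with $\Gamma$ bounded this yields a finite
\[
C_0:=\sup_{(t,x)\in\overline{Q^0},\,u\in\mathcal{U}}\bigl|\mathscr{A}^u\Psi(t,x)+\Gamma(t,x,u)\bigr|,
\]
depending only on $\Psi$ and on absolute constants. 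Next I would apply Lemma~\ref{lem:barfunbdy} with $\mathscr{O}=O$, which satisfies Assumption~\ref{assump:Domain O exterior} by Assumptions~\ref{assump:Domain O} and~\ref{assump:ellpiticity along boundary}, with the relevant exterior-ball radius and ellipticity constant controlled by $\eta$ and $\lambda$; this gives $\delta_0\in(0,1)$, $\kappa>0$ and a $1$-Lipschitz $\psi:\mathbb{R}^d\to[0,\infty)$ with $\psi\equiv 0$ on $O^c$, $\psi\ge\kappa$ on $\overline{O}_{-\delta_0}$, $\psi\in C^2(O\setminus\overline{O}_{-\delta_0})$, and $\mathscr{A}^u\psi(t,x)\le-\kappa$ for every $u$ and every $(t,x)\in(0,T)\times(O\setminus\overline{O}_{-\delta_0})$, where $\delta_0,\kappa$ and the Lipschitz bound of $\psi$ depend only on $\eta,\lambda$ and absolute constants. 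Then I would fix $\Lambda:=C_0$, $A:=C_0(T+1)/\kappa$ and set
\[
\overline{\psi}(t,x):=\Psi(t,x)+\min\!\bigl(A\psi(x),\,\Lambda(T-t)\bigr),\qquad \underline{\psi}(t,x):=\Psi(t,x)-\min\!\bigl(A\psi(x),\,\Lambda(T-t)\bigr).
\]
Because $\psi=0$ on $O^c$ and $\Lambda(T-t)\ge0$, the $\min$ vanishes on $[0,T)\times O^c$; because $T-t=0$ at $t=T$ and $A\psi\ge0$, it vanishes on $\{T\}\times\mathbb{R}^d$; hence $\overline{\psi}=\underline{\psi}=\Psi$ on $\partial_{\text{np}}Q$. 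Both functions are bounded and Lipschitz on $\overline{Q^0}$ with constant at most ${\rm Lip}(\Psi)+A+\Lambda$, so their modulus of continuity depends only on $\Psi$, $\eta$, $\lambda$ and absolute constants.

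It remains to check that $\overline{\psi}$ is a viscosity supersolution and $\underline{\psi}$ a viscosity subsolution of~\eqref{eq:HJB} in $Q$. The function $\Psi+\Lambda(T-t)$ is $C^{1,2}$ with $\mathscr{A}^u(\Psi+\Lambda(T-t))+\Gamma=(\mathscr{A}^u\Psi+\Gamma)-\Lambda\le C_0-\Lambda\le 0$, hence a classical supersolution; symmetrically $\Psi-\Lambda(T-t)$ is a classical subsolution. The point of the choice $A>C_0T/\kappa$ is that then the set of $(t,x)\in Q$ with $A\psi(x)\le\Lambda(T-t)$ is confined to $(0,T)\times(O\setminus\overline{O}_{-\delta_0})$ (indeed $\psi\ge\kappa$ on $O\cap\overline{O}_{-\delta_0}$ forces $A\psi\ge A\kappa>C_0T\ge\Lambda(T-t)$ there), and on that open layer $\Psi+A\psi$ is $C^{1,2}$ with $\mathscr{A}^u(\Psi+A\psi)+\Gamma\le C_0-A\kappa\le 0$ for every $u$ (resp.\ $\mathscr{A}^u(\Psi-A\psi)+\Gamma\ge A\kappa-C_0\ge0$). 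Consequently, if a test function $\varphi$ touches $\overline{\psi}$ from below over $Q^0$ at $(t_0,x_0)\in Q$, then either $A\psi(x_0)>\Lambda(T-t_0)$ and $\varphi$ touches the classical supersolution $\Psi+\Lambda(T-t)$ from below at $(t_0,x_0)$, or $(t_0,x_0)\in(0,T)\times(O\setminus\overline{O}_{-\delta_0})$ and $\varphi$ touches the classical supersolution $\Psi+A\psi$ from below; in either case the desired inequality follows from the elementary fact that a smooth classical supersolution is a viscosity supersolution (the nonlocal term of the relevant smooth function dominating that of $\varphi$ by the global minimum property, which is legitimate since $\psi$ is globally defined and Lipschitz and $|\gamma|\le C\rho$, $\int_{\{\rho\ge r\}}\rho\,d\nu<\infty$ make the integrals converge). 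The verification for $\underline{\psi}$ is the mirror image, with maxima replacing minima.

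\textbf{Main obstacle.} The delicate step is precisely this gluing: one must be sure that wherever the two building blocks compete it is a \emph{classical} strict super/subsolution that wins, which is why $A$ must be taken of order $C_0/\kappa$ — large enough that competition can occur only inside the thin boundary layer produced by Lemma~\ref{lem:barfunbdy}. The remaining bookkeeping — tracking that $C_0$ depends only on $\Psi$ and absolute data, and that $\delta_0,\kappa$ depend only on $\eta,\lambda$ and absolute data — is routine but is what makes the final modulus-of-continuity statement quantitative.
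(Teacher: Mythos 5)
There is a genuine gap at the very first substantive step. You apply Lemma~\ref{lem:barfunbdy} with $\mathscr{O}=O$, arguing that $O$ satisfies Assumption~\ref{assump:Domain O exterior}. But Lemma~\ref{lem:barfunbdy} has an additional hypothesis that you cannot discharge: it requires $\mathscr{O}$ to be ``a bounded domain with \emph{smooth} boundary,'' and its proof relies on this in an essential way — it starts from $d_{\mathscr{O}}(\cdot)\in C^{2}(\mathscr{D}_{2\delta_1})$, which is a consequence of $\partial\mathscr{O}$ being $C^{2}$, not of the exterior ball condition alone. Under Assumption~\ref{assump:Domain O} the domain $O$ is merely $\eta$-prox-regular; the distance function to $O^{c}$ need not be $C^{2}$ near $\partial O$ (think of a prox-regular domain with a concave corner in the interior direction). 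So the single global barrier $\psi$ on which your construction rests is not available.

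The paper avoids exactly this: it never applies Lemma~\ref{lem:barfunbdy} to $O$ itself. Instead, for each $x\in\partial O$ it applies the lemma to the smooth annulus $B_{R_0}(y_x)\setminus\overline{B}_{\eta}(y_x)$ (which has $C^\infty$ boundary and contains $O$ thanks to prox-regularity), producing a local barrier $\psi_x$ that vanishes at $x$, then forms $\widetilde{\psi}_x:=\min\{(\|\Gamma\|_\infty+1)(T-s),\,K_5\psi_x(y)\}$ to handle the terminal face, and finally takes $\widetilde{\psi}:=\inf_{x\in\partial O}\widetilde{\psi}_x$ to obtain a barrier vanishing on all of $\partial_{\rm np}Q$. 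This Perron-style infimum over a family of boundary barriers is what replaces your single $\psi$ and is the step your proposal is missing. Your remaining ideas — reducing to $\Psi$ via the bound $C_0=\sup|\mathscr{A}^u\Psi+\Gamma|$, taking a min with a multiple of $T-t$, and choosing the coefficient $A$ large compared with $C_0T/\kappa$ so that the transition set lies inside the $C^2$ boundary layer — are sound and in the same spirit as the paper's construction (the paper performs the $\Psi$-reduction by passing to $\widetilde\Gamma=\Gamma+\mathscr{A}^u\Psi$ rather than by adding $\Psi$ at the end, but this is cosmetic). If you replace the single barrier by $\inf_{x\in\partial O}$ of local annular barriers and keep track of the uniformity of $\delta_0,\kappa$, and the Lipschitz constants in $x$, the rest of your argument goes through.
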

\begin{proof}
We first consider the case $\Psi=0$. We extend our non-local parabolic equation by $u_t+\lambda\Delta u=0$ on $[0,T)\times O^c$. We note that
by Assumptions \ref{assump:Domain O} and \ref{assump:ellpiticity along boundary}, $\mathscr{O}=O$ satisfies Assumption
\ref{assump:Domain O exterior} with $r_{\mathscr{O}}=\eta$ and $\lambda_{\mathscr{O}}=\lambda$. Now, by the boundedness of $O$, there exists a sufficiently large constant $R_0$ such that, for any $x\in\partial O$, we have $O\subset B_{R_{0}-1}( y_{x})\setminus B_{\eta}(y_{x})$. By Lemma \ref{lem:barfunbdy}, applied in $B_{R_{0}}(y_{x})\setminus\overline{B}_{\eta}(y_{x})$, there are
$\delta_0>0$, $\kappa>0$, and a non-negative Lipschitz function $\psi_{x}$ on $\mathbb{R}^{d}$ with Lipschitz constant $1$, such that
$\psi_{x}=0$ on $B_{R_{0}}^{c}(y_{x})\cup\overline{B}_{\eta}(y_{x})$, $\psi_{x}\geq\kappa$ on $O\setminus B_{\eta+\delta_{0}}(y_{x})$,
$\psi_{x}\in C^{2}(\overline{O}\cap B_{\eta+\delta_{0}}(y_{x}))$ and, for any $(s,y)\in [0,T)\times(O\cap B_{\eta+\delta_{0}}(y_{x}))$,
\begin{eqnarray*}
\inf_{u\in\mathcal{U}}(\mathscr{A}^{u}\psi_x(s,y)+\Gamma(s,y,u))\leq-\kappa.
\end{eqnarray*}
It follows from the construction that the constants $\delta_0$, $\kappa$ are independent of $x\in\partial O$.

We take a sufficiently large constant $K_{5}>1$ such that $K_{5}\kappa\geq T(\|\Gamma\|_{L^{\infty}(\overline{Q^{0}}\times\mathcal{U})}+1)$.
It follows from the construction of the function $\psi_{x}$ that $K_{5}\psi_{x}$ is a viscosity supersolution to \eqref{eq:HJB} in $[0,T)\times(O\cap B_{\eta+\delta_{0}}(y_{x}))$ and $(\|\Gamma\|_{L^{\infty}(\overline{Q^{0}}\times\mathcal{U})}+1)(T-s)$ is a viscosity supersolution to \eqref{eq:HJB} in $Q$.

We define $\widetilde{\psi}_{x}(s,y):=\min\{(\|\Gamma\|_{L^{\infty}(\overline{Q^{0}}\times\mathcal{U})}+1)(T-s),\,K_{5}\psi_{x}(y)\}$. Then, $\widetilde{\psi}_{x}(s,x)=0$ for any $s\in[0,T)$, $\widetilde{\psi}_{x}(T,y)=0$ for any $y\in\mathbb{R}^{d}$, $\widetilde{\psi}_{x}\geq 0$ on $Q^{0}$ and
\begin{equation*}
\sup_{x\in\partial O}\left(\left\|D_{y}\widetilde{\psi}_{x}\right\|_{L^{\infty}(\overline{Q^{0}})}+\left\|\frac{\partial\widetilde{\psi}_{x}}{\partial s}\right\|_{L^{\infty}(\overline{Q^{0}})}\right)<+\infty.
\end{equation*}
It is easy to see that $\widetilde{\psi}_{x}$ is a viscosity supersolution to \eqref{eq:HJB} in $Q$.

We define $\widetilde{\psi}(s,y):=\inf_{x\in\partial O}\widetilde{\psi}_{x}(s,y)$. Then $\widetilde{\psi}$ is a non-negative viscosity supersolution to \eqref{eq:HJB} in $Q$, $\widetilde{\psi}(s,y)=0$ for any $(s,y)\in[0,T)\times\partial O$, and $\widetilde{\psi}(T,y)=0$ for any $y\in\mathbb{R}^{d}$. Therefore,
\begin{equation*}
\psi(s,y):=\left\{\begin{array}{ll} \widetilde{\psi}(s,y),\quad &\text{if }\,(s,y)\in Q, \\ 0,\quad &\text{if }\,(s,y)\in\partial_{{\rm np}}Q \end{array}\right.
\end{equation*}
is a viscosity supersolution of \eqref{eq:HJB} in $Q$ and $\psi=0$ on $\partial_{{\text np}}Q$.

We now consider the general case when $\Psi$ is an arbitrary $C^{1+\alpha/2,\,2+\alpha}(\overline{Q^{0}})$ function. Consider the following HJB equation
\begin{align}\label{eq:HJB33}
\inf_{u\in\mathcal{U}}(\mathscr{A}^{u}V(s,y)+\widetilde{\Gamma}(s,y,u))=0\quad\text{in }\,Q,
\end{align}
with terminal-boundary condition
\begin{align}\label{eq:TermBoundCondHJB33}
V(s,y)=0\quad\text{on }\,\partial_{{\rm np}}Q,
\end{align}
where
\begin{align*}
\widetilde{\Gamma}(s,y,u):=\Gamma(s,y,u)+\mathscr{A}^{u}\Psi(s,y).
\end{align*}
Since $\Psi\in C^{1+\alpha/2,\,2+\alpha}(\overline{Q^{0}})$, it follows that $\widetilde{\Gamma}:Q^{0}\times \mathcal{U}\rightarrow\mathbb{R}$ is bounded. By the first part of the proof, we know that there is a supersolution $\psi$ to \eqref{eq:HJB33} with terminal-boundary condition \eqref{eq:TermBoundCondHJB33}. We now define $\overline{\psi}:=\psi+\Psi$. Then $\overline{\psi}$ is a viscosity supersolution to \eqref{eq:HJB} with $\overline{\psi}=\Psi$ on $\partial_{{\rm np}}Q$.

Similarly, we can construct a viscosity subsolution to \eqref{eq:HJB} with $\underline{\psi}=\Psi$ on $\partial_{\rm np}Q$.
\end{proof}

\begin{lemma}\label{lem:barfun1}
Let $\Psi\in C^{1+\alpha/2,\,2+\alpha}(\overline{Q^{0}})$. Let Assumptions \ref{assump:SDECoefs}, \ref{assump:GammaPsi}, \ref{assump:Domain O}, and \ref{assump:ellpiticity along boundary} be valid. Let $\{b_{n}\}_{n\in\mathbb{N}}$, $\{\sigma_{n}\}_{n\in\mathbb{N}}$, $\{\gamma_{n}\}_{n\in\mathbb{N}}$, and $\{\Gamma_n\}_{n\in\mathbb{N}}$ be the sequences satisfying Assumption \ref{assump:CondsApproxFunts}. Then, there exists $\delta_{4}>0$ such that, for any $\delta\in(0,\delta_{4})$, there exists a uniformly continuous viscosity subsolution $\psi_{\delta}$ and a uniformly continuous viscosity supersolution $\psi^{\delta}$ to \eqref{eq:HJBDeltan} and to \eqref{eq:HJBDelta}, such that $\psi^{\delta}=\psi_{\delta}=\Psi$ on $\partial_{{\rm np}}Q_{\delta}$. Moreover, for any $\delta\in(0,\delta_{4})$, there exists a uniformly continuous viscosity subsolution $\overline{\psi}_{\delta}$ and a uniformly continuous viscosity supersolution $\overline{\psi}^{\delta}$ to \eqref{eq:HJBTildeDeltan} and to \eqref{eq:HJBDelta1}, such that $\overline{\psi}^{\delta}=\overline{\psi}_{\delta}=\Psi$ on $\partial_{{\rm np}} Q_{\delta}$. The modulus of continuity of $\psi^{\delta}$, $\psi_{\delta}$, $\overline{\psi}^{\delta}$, and $\overline{\psi}_{\delta}$ only depend on various absolute constants, $\eta$, $\lambda$, and $\Psi$ (and are independent of the parameters $n$ and $\delta$ there).
\end{lemma}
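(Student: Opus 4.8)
The plan is to reduce everything to Lemma \ref{lem:barfunbdy} and Lemma \ref{lem:barfun}, exploiting the fact that all of the coefficient families involved satisfy \emph{uniform} boundedness and parabolicity conditions along the relevant boundaries. First I would record, using Theorem \ref{thm:proxdomain} together with Assumptions \ref{assump:Domain O} and \ref{assump:ellpiticity along boundary}, that there is a $\delta_4\in(0,\eta/2)$ such that for every $\delta\in(0,\delta_4)$ the enlarged domain $\mathscr{O}=O_\delta$ satisfies Assumption \ref{assump:Domain O exterior} with exterior-ball radius $r_{\mathscr{O}}=\eta/2$ and ellipticity constant $\lambda_{\mathscr{O}}=\lambda/2$, where crucially \emph{both constants are independent of $\delta$}; here one uses Theorem \ref{thm:proxdomain} for the geometry and Assumption \ref{assump:CondsApproxFunts}-(iii) for the nondegeneracy of $\sigma_n$ (and Assumption \ref{assump:ellpiticity along boundary} for $\sigma$ itself) along $\partial O_\delta$. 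The point of extracting uniform $r_{\mathscr{O}},\lambda_{\mathscr{O}}$ is that the constants $\delta_0,\kappa,L$ and hence the Lipschitz/$C^2$ bounds produced in Lemma \ref{lem:barfunbdy} depend only on these quantities, on $C$ (via \eqref{eq:inftybounds} and $\widetilde C$), and on the fixed function $\rho$ — never on $n$ or $\delta$.

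Next I would run the construction of Lemma \ref{lem:barfun}, verbatim, but with $Q$ replaced by $Q_\delta$, $O$ replaced by $O_\delta$, and the coefficients $(b,\sigma,\gamma,\Gamma)$ replaced by $(b_n,\sigma_n,\gamma_n,\Gamma_n)$. Concretely: for the case $\Psi\equiv 0$, for each $x\in\partial O_\delta$ apply Lemma \ref{lem:barfunbdy} in an annulus $B_{R_0}(y_x)\setminus\overline B_{\eta/2}(y_x)$ (with $R_0$ uniform because $O_\delta\subset O_{\eta/2}$ is contained in a fixed ball), obtaining a Lipschitz function $\psi_x\ge 0$ with $\psi_x=0$ outside the annulus, $\psi_x\ge\kappa$ on $O_\delta\setminus B_{\eta/2+\delta_0}(y_x)$, $\psi_x\in C^2$ near the boundary, and $\mathscr{A}_n^u\psi_x(s,y)+\Gamma_n(s,y,u)\le-\kappa$ there; then form $\widetilde\psi_x:=\min\{(\|\Gamma_n\|_\infty+1)(T-s),\,K_5\psi_x\}$ with $K_5\kappa\ge T(\widetilde C+1)$, take the infimum over $x\in\partial O_\delta$, and extend by $0$ on $\partial_{\rm np}Q_\delta$. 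Because $\|\Gamma_n\|_\infty\le\widetilde C$ and the constants $\delta_0,\kappa,R_0,K_5$ are $n$- and $\delta$-independent, the resulting supersolution $\psi^\delta$ has a modulus of continuity (in fact a Lipschitz constant in $y$ and a bound on $\partial_s$) depending only on the absolute constants, $\eta$, $\lambda$, and — once we add back $\Psi$ via $\psi^\delta:=\psi+\Psi$ after absorbing $\mathscr{A}_n^u\Psi$ into a modified running cost $\widetilde\Gamma_n:=\Gamma_n+\mathscr{A}_n^u\Psi$ — on $\Psi$. The subsolution $\psi_\delta$ is built symmetrically. One has to check that $\widetilde\Gamma_n$ is uniformly bounded: this follows from $\Psi\in C^{1+\alpha/2,\,2+\alpha}(\overline{Q^0})$ (so $\Psi_t,D_x\Psi,D_x^2\Psi$ are bounded) and the bounds in Assumption \ref{assump:CondsApproxFunts}-(i), with the nonlocal term $\int(\Psi(\cdot+\gamma_n(\cdot,z))-\Psi-D_x\Psi\cdot\gamma_n)\nu(dz)$ controlled by $\|D_x^2\Psi\|_\infty\int\rho^2\,d\nu=\|D_x^2\Psi\|_\infty M$ using $|\gamma_n|\le\widetilde C\rho$.

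For the operators $\mathscr{A}_n^u$ in \eqref{eq:HJBDeltan} I would additionally note that the extra term $\tfrac12\epsilon_n\,{\rm tr}(D_x^2\psi)$ only \emph{helps} the supersolution inequality (since $D^2\psi\le 0$ in the relevant region where $\Theta''\le 0$, exactly as exploited in Lemma \ref{lem:barfunbdy}) and, after adding $\Psi$, contributes a term bounded by $\tfrac12\epsilon_n\|D_x^2\Psi\|_\infty$ with $\epsilon_n\to 0$; so no new $n$-dependence enters the estimates. The statements for \eqref{eq:HJBDelta}, \eqref{eq:HJBTildeDeltan}, and \eqref{eq:HJBDelta1} are then immediate special cases: \eqref{eq:HJBDelta} and \eqref{eq:HJBDelta1} use the original coefficients $(b,\sigma,\gamma)$ (covered by the same argument with $\widetilde C$ replaced by $C$ and the infimum over $\mathcal{U}$ or $\mathcal{U}_n$), and \eqref{eq:HJBTildeDeltan} is \eqref{eq:HJBDelta1} with the infimum taken over the smaller set $\mathcal{U}_n\subset\mathcal{U}$, which only makes the left-hand side of the subsolution (resp.\ supersolution) inequality larger (resp.\ the same infimum bound still applies since $\inf_{\mathcal{U}_n}\ge\inf_{\mathcal{U}}$ for supersolutions and one argues the subsolution side by the uniform choice of $u_0$). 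The main obstacle — and the only place requiring genuine care rather than bookkeeping — is verifying the $n$- and $\delta$-uniformity of every constant that feeds into the modulus of continuity: one must track that $\delta_0,\kappa,L,t_0,R_0,K_5$ in the chain Lemma \ref{lem:barfunbdy} $\to$ Lemma \ref{lem:barfun} depend only on $C,\widetilde C,M,T$, the function $\rho$, the uniform exterior-ball radius $\eta/2$, the uniform ellipticity $\lambda/2$, and $\Psi$, which is precisely why Assumption \ref{assump:CondsApproxFunts} was formulated with a \emph{single} universal constant $\widetilde C$ in part (i) and with the $\delta$-uniform nondegeneracy in part (iii).
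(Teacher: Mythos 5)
Your proposal follows the paper's own two‑sentence proof essentially verbatim, just with the details spelled out: apply Theorem~\ref{thm:proxdomain} and Assumptions~\ref{assump:Domain O}, \ref{assump:ellpiticity along boundary}, and \ref{assump:CondsApproxFunts} to verify Assumption~\ref{assump:Domain O exterior} for $\mathscr{O}=O_{\delta}$ with $\delta$‑independent radius $\eta/2$ and ellipticity $\lambda/2$, then rerun the constructions of Lemmas~\ref{lem:barfunbdy} and~\ref{lem:barfun} with $(b_n,\sigma_n,\gamma_n,\Gamma_n)$ and the uniform constant $\widetilde C$. The uniformity tracking of $\delta_0,\kappa,L,R_0,K_5$, the treatment of the $\epsilon_n$ Laplacian as having the right sign, and the handling of $\mathscr{A}^u_n\Psi$ via a modified $\widetilde\Gamma_n$ are all exactly as the paper intends.

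One small correction to the parenthetical about passing from $\mathcal{U}$ to $\mathcal{U}_n$: the inequality $\inf_{\mathcal{U}_n}\ge\inf_{\mathcal{U}}$ makes subsolutions for $\mathcal{U}$ automatically subsolutions for $\mathcal{U}_n$, but it goes the \emph{wrong} way for supersolutions (a supersolution needs $\inf(\cdot)\le 0$, and enlarging the infimum by restricting the set could destroy that). The reason your supersolution $\overline\psi^{\delta}$ also works for \eqref{eq:HJBTildeDeltan} is not a comparison of infima but the fact that Lemma~\ref{lem:barfunbdy} delivers $\mathscr{A}^u\psi\le-\kappa$ \emph{for every single $u\in\mathcal{U}$}, so the bound holds pointwise over any subset $\mathcal{U}_n$ and a fortiori for $\inf_{\mathcal{U}_n}$. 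Worth stating that explicitly, since it is precisely the reason the construction in Lemma~\ref{lem:barfunbdy} is formulated with a uniform‑in‑$u$ bound rather than an infimum bound.
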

\begin{proof}
We note that Assumptions \ref{assump:Domain O}, \ref{assump:ellpiticity along boundary} and \ref{assump:CondsApproxFunts} imply that $\mathscr{O}=O_{\delta}$ satisfies Assumption \ref{assump:Domain O exterior} for sufficiently small $\delta>0$, and with $r_{\mathscr{O}}=\eta/2$ and $\lambda_{\mathscr{O}}=\lambda/2$. We then construct the functions $\psi^{\delta}$, $\psi_{\delta}$, $\overline{\psi}^{\delta}$, and $\overline{\psi}_{\delta}$ as in the proof of Lemma \ref{lem:barfun}.
\end{proof}

\end{document}